\newcommand\RSloop{\@ifnextchar\bgroup\RSloopa\RSloopb}
\newcommand\RSloopa[1]{\bgroup\RSloop#1\relax\egroup\RSloop}
\newcommand\RSloopb[1]%
\newcommand\X{0}
\newcommand\RS[1]%
\newcommand\RSdef[1]{\expandafter\def\csname RS:#1\endcsname}
\newlength\RSu
\newtheorem{theorem}{Theorem}[section]
 \newtheorem{lemma}[theorem]{Lemma}
 \theoremstyle{definition}
 \newtheorem{definition}[theorem]{Definition}
 \theoremstyle{remark}
 \newtheorem{remark}[theorem]{Remark}
 \newtheorem{example}[theorem]{Example}
 \numberwithin{equation}{section}
\newtheorem*{Introtheorem}{\bf Theorem}
\begin{document}

\title[Explicit Quillen models for Cartesian products of $2$-cones]{Explicit Quillen models for Cartesian products of $2$-cones}

\author[U. Buijs]{Urtzi Buijs}
\address{Departamento de Algebra, Geometr\'{\i}a y Topolog\'{\i}a, Universidad
de M\'alaga, Ap. 59, 29080 M\'alaga, Spain}
\email{ubuijs@uma.es}

\author[J. Carrasquel]{Jos\'e Carrasquel}
\email{jgcarras@gmail.com}

\author[L. Vandembroucq]{Lucile Vandembroucq}
\address{Centro de Matemática, Universidade do Minho,
 Campus de Gualtar,
4710-057 Braga,
Portugal}
\email{lucile@math.uminho.pt}

\keywords{Rational Homotopy Theory; Lie algebras; Quillen models} \subjclass[2010]{Primary: 55P62;
Secondary: 	17B40}

\maketitle

\begin{abstract}
We give an explicit minimal Quillen model for the Cartesian product $X\times Y$ of rational $2$-cones in terms of derivations and a binary operation $\star \colon \mathbb{M}(V)\otimes \mathbb{L}(W)\to \mathbb{L}(V\oplus W\oplus s(V\otimes W))$, where $(\mathbb{L}(V), \partial)$ and $(\mathbb{L}(W), \partial)$ are Quillen minimal models for $X$ and $Y$ respectively and $\mathbb{M}$ denotes the free magma on $W$.

The model presented also allows us to explicitly describe a model for the diagonal map $\Delta \colon X\to X\times X$.
\end{abstract}

\section{introduction}

In \cite[VII.1.(2)]{Tanre}, a minimal Quillen model for the product of two spaces $X$ and $Y$ is given in terms of minimal Quillen models of $X$ and $Y$. Namely,

\begin{Introtheorem}
Let $X$ and $Y$ be two pointed topological spaces of finite type and $1$-connected, with minimal Quillen models $(\mathbb{L}(V), \partial_V)$ and $(\mathbb{L}(W), \partial_W)$ respectively. Then, the Cartesian product $X\times Y$ has a minimal Quillen model of the form
\begin{equation}\label{modelTanre}
\Psi \colon \Bigl(\mathbb{L}(V\oplus W \oplus s(V\otimes W)), D\Bigr) \longrightarrow \Bigl(\mathbb{L}(V)\times \mathbb{L}(W), \partial_V\times \partial_W\Bigr)
\end{equation}
where $\Psi (v)=v$, $\Psi (w)=w$, $\Psi (s(v\otimes w))=0$, $Dv=\partial _V (v)$, $Dw=\partial_W (w)$ and $D(s(v\otimes w))=[v, w] +\beta (v, w)$, with $v\in V$, $w\in W$ and $\beta (v, w)$ a decomposable element in the Lie algebra $\text{Ker }\Psi $.
\end{Introtheorem}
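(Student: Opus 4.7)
The plan is to view $(\mathbb{L}(V) \times \mathbb{L}(W), \partial_V \times \partial_W)$ as the target -- it is known to be a Quillen model for $X \times Y$ via the standard fact that the product of DGLs models the Cartesian product of spaces -- and construct the free Lie algebra in the statement together with $\Psi$ as a minimal cofibrant replacement of this product. The shape of the model is dictated by the observation that in the product DGL any bracket of the form $[(v,0),(0,w)]$ vanishes, so the naive map $\mathbb{L}(V \oplus W) \to \mathbb{L}(V) \times \mathbb{L}(W)$ has a large kernel generated by the mixed brackets $[v,w]$; the extra generators $s(V \otimes W)$ are introduced precisely to cancel the corresponding homology classes.

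Concretely, I would set $\Psi(v) = (v,0)$, $\Psi(w) = (0,w)$, $\Psi(s(v \otimes w)) = 0$, $Dv = \partial_V v$, $Dw = \partial_W w$, and then extend the differential to the new generators by $D(s(v \otimes w)) = [v,w] + \beta(v,w)$ with $\beta(v,w)$ a decomposable element of $\text{Ker } \Psi$ still to be determined. The requirement $D^2 s(v \otimes w) = 0$ forces
\[
D\beta(v,w) = -[\partial_V v, w] - (-1)^{|v|}[v, \partial_W w],
\]
and a short computation using $\partial_V^2 = 0$ and $\partial_W^2 = 0$ shows that the right-hand side is already a $D$-cycle, so the real question is whether it is a decomposable boundary within the portion of the algebra already constructed. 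I would prove existence of $\beta$ by an inductive/obstruction-theoretic argument, organising the pairs $(v,w)$ by the total degree $|v|+|w|$ and rewriting the obstruction using Jacobi so that it is expressed in terms of brackets $[v',w']$ with $|v'|+|w'|<|v|+|w|$; such brackets have already been made boundaries by the previously introduced $s(v'\otimes w')$ (modulo their own $\beta$'s), allowing one to assemble $\beta(v,w)$ from those earlier generators. This inductive construction, together with the simultaneous verification that the chosen corrections land in $\text{Ker }\Psi$ and remain decomposable, is the step I expect to be the main technical obstacle.

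Finally, to conclude that $\Psi$ is a quasi-isomorphism I would analyse $\text{Ker } \Psi$, which is the DGL ideal generated by the $s$-generators together with all mixed brackets. By construction the $s$-generators kill precisely the cycles produced by the mixed brackets (modulo the decomposable corrections $\beta$), so a filtration argument by total $V{\oplus}W$-weight shows $\text{Ker } \Psi$ is acyclic. Combined with surjectivity of $\Psi$ onto $\mathbb{L}(V) \times \mathbb{L}(W)$, this yields the quasi-isomorphism. Minimality is automatic: the differential of every generator -- $\partial_V v$, $\partial_W w$, or $[v,w] + \beta(v,w)$ -- is decomposable, so $(\mathbb{L}(V \oplus W \oplus s(V \otimes W)), D)$ is indeed a minimal Quillen model for $X \times Y$.
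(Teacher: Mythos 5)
A point of reference first: the paper does not actually prove this statement itself; it is quoted from \cite[VII.1.(2)]{Tanre}, and the only ingredient of its proof that the paper records is Lemma~\ref{Quism} (cited from \cite{BC}), which says precisely that any morphism $\Psi$ of the displayed form with $D^2=0$ is automatically a quasi-isomorphism. Your outline follows the same classical route as those references: take $\mathbb{L}(V)\times\mathbb{L}(W)$ as a model of $X\times Y$, attach generators $s(V\otimes W)$ to kill the mixed brackets, and determine $D$ on them by an obstruction argument. The strategy is the right one, and your preliminary computations (that the obstruction $-[\partial_V v,w]-(-1)^{|v|}[v,\partial_W w]$ is a cycle lying in $\mathrm{Ker}\,\Psi$, that surjectivity plus acyclic kernel gives the quasi-isomorphism, and that minimality is automatic) are all correct. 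The problem is that the two steps carrying all the weight are asserted rather than proved.

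Concretely: (i) the induction producing $\beta(v,w)$ does not close as described. Writing $\partial_V v=\sum_i[a_i,b_i]$ and applying Jacobi turns $[\partial_V v,w]$ into terms $[a_i,[b_i,w]]$; replacing $[b_i,w]$ by $Ds(b_i\otimes w)-\beta(b_i,w)$ produces, besides the exact term $\pm D[a_i,s(b_i\otimes w)]$, residual terms $[Da_i,s(b_i\otimes w)]$ and $[a_i,\beta(b_i,w)]$ which live in the \emph{same} total degree as the original obstruction, so ``induction on $|v|+|w|$'' alone does not terminate; one needs a secondary induction (on bracket length or weight in the $s$-generators), or else the general existence theorem for minimal models of connected dgl's applied to $\mathbb{L}(V)\times\mathbb{L}(W)$ together with the identification $H(Z,D_1)\cong s\widetilde H(X\times Y;\mathbb{Q})$ and the K\"unneth formula, which is what Tanr\'e actually does. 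This is exactly the ``lengthy and tedious'' procedure the paper alludes to, and whose explicit resolution for $2$-cones is the paper's main content. (ii) The claim that ``by construction the $s$-generators kill precisely the cycles produced by the mixed brackets, so a filtration argument shows $\mathrm{Ker}\,\Psi$ is acyclic'' is stated in a way that is close to circular: that the $s$-generators kill exactly the right classes, no more and no fewer, is the content of Lemma~\ref{Quism}, which neither you nor the paper proves. Until (i) and (ii) are carried out, the proposal is a correct plan rather than a proof.
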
 

The proof of this theorem provides a procedure to obtain the model of the form $(\ref{modelTanre})$ but it could be lengthy and tedious.

Explicit formulas for the differential of the above model have turned hard to obtain. In \cite{Lupton-Smith} it was given such a formula for the particular case in which one factor of the product $X\times Y$ is a rational co-H-space,  having in consequence a Quillen model of the form $(\mathbb{L}(V), 0)$. If both factors have models with non-zero differenials the complexity of model $(\ref{modelTanre})$ grows considerably.
\vskip.3cm

Far from being a simple task, knowing the rational homotopy of the Cartesian product is a matter of great complexity and significance.

Not in vain, ``open question'' number 15 of the 17 collected as a challenge for the rational homotopy community in \cite{FHT} is stated as follows:
\vskip .3cm
\emph{Suppose $X$, $Y$ and $Z$ are simply connected rational CW complexes with homology of finite type. Is it true that $X\times Z \simeq X \times Y$ implies $Z\simeq Y$?}
\vskip .3cm

In addition to these fundamental questions, the study of invariants such as the sectional category of a continuous map $f\colon X\to Y$ (which generalizes the Lusternik-Schnirelmann (LS) category \cite{LS} and Farber's topological complexity \cite{F}) can be stated in terms of Cartesian products.

Indeed, if $f\colon X \hookrightarrow Y$ is the inclusion of a sub CW-complex then, the sectional category of $f$ is the smallest $n$ for which the diagonal 
$$\Delta_{n+1} \colon Y \to \underbrace{Y\times \cdots \times Y}_{\text{$n+1$ times}}$$
factors up to homotopy through the fat wedge.

Traditionally, the study of these invariants has been carried out following Sullivan's approach to rational homotopy in the framework of commutative differential graded algebras \cite{FH}, as for example the product formula for rational category $\text{cat}_0 (X\times Y)=\text{cat}_0 X+ \text{cat}_0Y$ proved in \cite{FHL}.

This product formula was conjectured during a time in which particular cases were tested, with special relevance being Ganea's conjecture ($\text{cat}(X\times S^n)=\text{cat}X + 1$), which was rationally proven in \cite{H}.

These results, involving Cartesian products, are not satisfied for other invariants, such as cone-length, as was proven in \cite{D}. Since the study of cone-length is best suited to Quillen's treatment of rational homotopy, using graded differential Lie algebras (dgl) it is of particular importance to understand Cartesian products from this perspective.

\vskip .2cm
The main goal of this work is the development of an explicit differential for the Quillen minimal model of the Cartesian product $X\times Y$ of two $2$-cones, i.e., $X$ and $Y$ have minimal Quillen models $(\mathbb{L}(V), \partial_V)$ and $(\mathbb{L}(W), \partial_W)$ respectively, where $V=V_0\oplus V_1$ and $W=W_0\oplus W_1$ with $\partial _V (V_0) = \partial _W (W_0)=0$ and $\partial (V_1)\subset \mathbb{L}(V_0)$ and $\partial (W_1)\subset \mathbb{L}(W_0)$.

This is the first step in constructing a general explicit model for the product defined in terms of a cone decomposition of the factors and we present the tools and techniques necessary for this purpose. Having these explicit models is a requirement to begin the study of the sectional category in terms of Quillen models carried out in \cite{BC}.

Section 2 is dedicated to developing the necessary preliminaries and presenting the known examples. In particular, we show the case of the Cartesian product in which a factor is a co-H space described in \cite{Lupton-Smith} where the differential is given in terms of derivations.
\vskip .3cm
Section 3.1 is devoted to define the differential $D$ in 
$$L_1 = \mathbb{L}(V\oplus W \oplus s(V_0 \otimes W_0) \oplus s(V_1\otimes W_0)\oplus s(V_0 \otimes W_1)),$$
and to prove some combinatorial lemmas relating this differential with derivations in $\text{Der}(L)$, where $L=\mathbb{L}(V\oplus W \oplus s(V\otimes W))$.

In Section 3.2 we present a product:
$$\star \colon \mathbb{M}(V)\otimes \mathbb{L}(W)\to \mathbb{L}(V\oplus W \oplus s(V\otimes W)),$$ 
where $\mathbb{M}(V)$ denotes the magma on a graded vector space $V$ which will be essential to define the differential in our model.

This product has some interesting properties, as for example, that it does not respect Jacobi identity. This is precisely the reason why it is necessary to introduce the magma on $V$ instead of the free Lie algebra.

The formula presented in Section 3.3 for the differential in the Cartesian product of $2$-cones, namely
$$D( s(v\otimes w))=[v, w]-(-1)^{(|v|+1)|w|}\sigma_{w}(\partial_V v) -(-1)^{|v|}\sigma_v (\partial_W w)+(-1)^{|v|}\overline{\partial_V v}\star \partial_W w,$$
involves the differntial $\partial_V v$ and $\partial_Ww$ of the generators $v$ and $w$, derivations $\sigma_v, \sigma_w \in \text{Der}(L)$ and the above star product.
\vskip .3cm
Section 4 is dedicated to give some applications of the Quillen model of the product of $2$-cones and further examples. Section 4.1 contains an explicit model for the diagonal map $X\to X\times X$ of $2$-cones, which is a essential tool if we are interested in studying invariants such as sectional category in terms of minimal Quillen models.



Finally, in Section 4.2 we explicitly describe the minimal Quillen model of a product of two particular 3-cones and we confirm that the formula given for 2-cones is insufficient, since new terms appear that are not described in it.
\section{preliminaries}

We assume the reader is familiar with the basics of rational homotopy theory being \cite{FHT,Tanre} excellent and standard references. With the aim of fixing notation we give some definitions we will need. Throughout this paper we assume that $\mathbb{Q}$ is the base field.

A {\em graded Lie algebra} is a $\mathbb{Z}$-graded vector space $L=\oplus_{p\in \mathbb{Z}} L_p$ with a bilinear product called the {\em Lie bracket}, denoted by $[-,-]$ which satisfies graded antisymmetry, $[x, y]=-(-1)^{|x||y|}[y, x]$, and graded Jacobi identity,

\begin{equation}\label{Jacobi}
(-1)^{|x||z|}\Bigl[ x , [y, z]\Bigr] + (-1)^{|y||x|}\Bigl[ y , [z, x]\Bigr]+ (-1)^{|z||y|} \Bigl[ z, [x, y]\Bigr]=0,
\end{equation}

where $|x|$ denotes the degree of an element $x$. We say that $\sigma \colon L \to L$ is a {\em derivation of degree} $p$, and write $\sigma \in \text{Der}_p (L)$, if $\sigma (L_n)\subset L_{n+p}$  and $\sigma ([x , y]) =[\sigma x , y ]+(-1)^{|x|p}[x , \sigma y]$ for any $x, y\in L$.

A {\em differential graded Lie algebra} (dgl from now on) is a graded Lie algebra $L$ endowed with a linear derivation $\partial $ of degree $-1$ which is a differential, that is, $\partial\circ \partial =0$. A dgl $L$ is called {\em free} if it is free as a Lie algebra, $L=\mathbb{L}(V)$ for some graded vector space $V$ and it is called {\em reduced} if $L_p=0$ for $p\leq 0$. 

We denote by $L*L'$ the {\em free product} of the graded Lie algebras  $L$ and $L'$. In general, if $L$ and $L'$ are expressed in terms of generators and relations as $L=\mathbb{L}(V)/I$ and $L'=\mathbb{L}(V')/I'$, then $L*L'=\mathbb{L}(V\oplus V')/I\cup I'$.

Let $L$ be a dgl. We denote by $\text{Der}(L)=\oplus_{p\in \mathbb{Z}} \text{Der}_p (L)$ the dgl of derivations, where the Lie bracket and differential are given by
$$[\sigma , \tau ]=\sigma \circ \tau -(-1)^{|\sigma ||\tau |} \tau \circ \sigma,\ \ \ D\sigma = \partial \circ \sigma -(-1)^{|\sigma |} \sigma \circ \partial,$$
(altough we usually omit the sign $\circ$).

Let $x\in L$, we define the linear map $\text{ad}_x \colon L \to L$ of degree $|x|$ by $\text{ad}_x (y) = [x, y]$ for any $y\in L$. We can prove that Jacobi identity is equivalent to $ad_x$ being a derivation for any $x\in L$, so identity $(\ref{Jacobi})$ will be used more frequently in the form
\begin{equation*}
\Bigl[ x , [y, z]\Bigr] = \Bigl[ [x , y], z]\Bigr]+ (-1)^{|x||y|} \Bigl[ y, [x, z]\Bigr].
\end{equation*}

In \cite{Quillen}, D. Quillen constructed an equivalence
between the  homotopy category of simply connected rational complexes and the homotopy category of reduced differential graded
 Lie algebras.
 $$   \begin{array}{c}
        \text{ Simply connected}\\
        \text{ spaces}\end{array}   \xymatrix{ \ar@<0.75ex>[r]^{\lambda} &
\ar@<0.75ex>[l]^{\langle -\rangle }}\begin{array}{c}
        \text{ Reduced}\\
        \text{dgl's}\end{array}$$
We say that a reduced dgl $L$ is a {\em model} of the simply connected complex $X$ if there is a sequence of dgl quasi-isomorphisms
\begin{equation*}
L\stackrel{\simeq }{\rightarrow}\cdots \stackrel{\simeq }{\leftarrow} \lambda X.
\end{equation*}

If $L=(\mathbb{L} (V),\partial)$ is free  we say that it is a {\em Quillen model} of $X$. If $\partial(V)\subset \mathbb{L}^{\geq 2} (V)$, i.e., the differential $\partial $ has no linear term, we say that $L$ is the minimal Quillen model of $X$.

For any model one has $H(L)\cong \pi_*(\Omega X)\otimes\mathbb{Q}$ as Lie algebras and if $L=(\mathbb{L} (V),\partial)$ is a Quillen model we have $H(V,\partial_1)\cong s\widetilde H(X;\mathbb{Q})$ where $\partial_1\colon V\to V$ denotes the linear part of $\partial$ and $s$ denotes the suspension operator which is defined for any graded vector space $W$ by $(sW)_p=W_{p-1}$.

\begin{example} \label{minimalQuillenmodels}
Some examples of minimal Quillen models are given by:
\begin{itemize}
\item[(i)] Spheres $S^n$ have minimal Quillen models with a single generator $v$ in degree $n-1$ and zero differential $(\mathbb{L}(v), 0)$.

\item[(ii)] The wedge of two simply connected spaces $X$ and $Y$ with minimal Quillen models $(\mathbb{L}(V), \partial_V)$ and $(\mathbb{L}(W), \partial_W)$ respectively, is given by $(\mathbb{L}(V\oplus W), \partial )$ where $\partial (v) = \partial_V (v)$ and $\partial (w) = \partial_W (w)$ for any $v\in V$ and $w\in W$.
\item[(iii)] Any co-H-space $X$ has the rational homotopy type of a wedge of spheres $X\simeq_{\mathbb{Q}} \bigvee_{n} S^{n} $ and therefore has a minimal Quillen model of the form $(\mathbb{L}(\{ v_n\} ), 0)$ with $|v_n|=n-1$.
\item[(iv)] The minimal Quillen model of the complex projective plane $\mathbb{C}P^2$ is of the form $(\mathbb{L}(x, y), \partial )$ where $|x|=1$, $|y|=3$ with $\partial x=0$ and $\partial y =[x, x]$.
 
\end{itemize}
\end{example}

A minimal Quillen model of a Cartesian product can be given in terms of minimal Quillen models of the factors. The classic result stated in the introduction \cite[VII.1.(2)]{Tanre} admits the following modification which can be found in \cite[Proposition 1.3]{BC}.

\begin{theorem}\label{Tanre}
Let $(\mathbb{L}(V),\partial_V)$ and $(\mathbb{L}(W),\partial_W)$ be minimal Quillen models for $X$ and $Y$ respectively. Then the minimal Quillen model for $X\times Y$ has the form $\Psi \colon L= \left(\mathbb{L} (V\oplus W\oplus s(V\otimes W)), D \right)\stackrel{\simeq}{\longrightarrow} (\mathbb{L}(V),\partial_V)\times (\mathbb{L}(W),\partial_W)$, with $\Psi(v)=v$, $\Psi(w)=w$, $\Psi(s(v\otimes w))=0$, $D(v)=\partial(v)$, $D(w)=\partial(w)$ and \[D(s(v\otimes w))=[v,w]+D^+(s(v\otimes w)),\] where $D^+(s(v\otimes w))\in I_s:=\mathbb{L} (V\oplus W)*\mathbb{L}^+(s(V\otimes W))$.
\end{theorem}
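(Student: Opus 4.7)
The plan is to construct the model inductively on a degree filtration rather than invoking the classical Tanré theorem as a black box. First I would set $\Psi$ and the restriction of $D$ to $V \oplus W$ as specified in the statement. Since the target $(\mathbb{L}(V), \partial_V) \times (\mathbb{L}(W), \partial_W)$ has vanishing mixed brackets, $\Psi(D(s(v \otimes w))) = 0$ is forced, so $D(s(v \otimes w))$ must lie in $\ker \Psi$. The ideal $\ker \Psi$ is generated by $s(V \otimes W)$ together with the mixed brackets $[V, W]$, and the form $[v, w] + D^+(s(v \otimes w))$ with $D^+ \in I_s$ is the minimal shape consistent with the required $D^2 = 0$ identity.

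Next I would build $D^+$ by induction on $\deg(s(v \otimes w)) = |v| + |w| + 1$. Assuming $D$ has been defined on all lower-degree $s$-generators so that $D^2 = 0$ and $D^+ \in I_s$, the requirement $D^2(s(v \otimes w)) = 0$ becomes
\[
D(D^+(s(v \otimes w))) = -[\partial_V v, w] - (-1)^{|v|}[v, \partial_W w].
\]
A short calculation using $\partial_V^2 = 0 = \partial_W^2$ together with the derivation property shows that the right-hand side is a $D$-cycle. To produce a primitive inside $I_s$, I would expand $\partial_V v$ and $\partial_W w$ as Lie brackets of lower-degree generators and, for every mixed bracket $[v', w']$ that arises, substitute $[v', w'] = D(s(v' \otimes w')) - D^+(s(v' \otimes w'))$ using the inductive hypothesis. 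Iterating this substitution produces an element of $I_s$ whose boundary matches the right-hand side. Minimality is preserved because $\partial_V$ and $\partial_W$ have no linear part, and neither do the inductive substitutions.

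That $\Psi$ is then a quasi-isomorphism follows by filtering both source and target by the number of $s$-generators: on the associated graded, $D$ sends $s(v \otimes w)$ to $[v, w]$, and a standard spectral-sequence argument identifies $H(\Psi)$ with an isomorphism onto $H((\mathbb{L}(V), \partial_V) \times (\mathbb{L}(W), \partial_W))$.

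I expect the main obstacle to be the bookkeeping that keeps $D^+$ strictly inside $I_s$. The substitution $[v', w'] \leftrightarrow D(s(v' \otimes w')) - D^+(s(v' \otimes w'))$ reintroduces the previously constructed correction $D^+(s(v' \otimes w'))$, itself an $I_s$-element that refers to still-lower-degree generators, so one must ensure that iterating the substitution terminates and leaves no residual tail in $\mathbb{L}(V \oplus W)$. This combinatorial difficulty is precisely what the explicit formulas developed in Section 3 of the paper are designed to resolve in the $2$-cone case.
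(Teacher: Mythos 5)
The paper does not prove this statement: it is imported verbatim from D.~Tanr\'e's book (VII.1.(2)) and from \cite[Proposition 1.3]{BC}, with the quasi-isomorphism part recorded separately as Lemma~2.4. So your proposal has to be measured against the standard obstruction-theoretic proof, which is indeed the strategy you outline. Within that strategy there are two genuine gaps. First, the concluding spectral-sequence argument does not work as stated: the filtration by number of $s$-generators is not compatible with $D$, because $D^{+}(s(v\otimes w))$ is only required to lie in $I_s$ and in general contains brackets of two or more $s$-generators. This is not a pathology one can wave away --- it already happens in the paper's own $2$-cone formula, where $\overline{\partial_V v}\star\partial_W w$ produces terms $\sigma_{\underline{A}}\sigma_{\underline{B}}(T)$ with two $s$-generators, and in the $3$-cone example terms with three. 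Hence $D$ neither preserves the increasing filtration ``at most $p$ occurrences of $s$'' nor the decreasing one ``at least $p$ occurrences,'' and the claimed $E_0$-page does not exist.

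Second, the inductive construction of $D^{+}$ is incomplete at the key step. After substituting $[v',w']=D(s(v'\otimes w'))-D^{+}(s(v'\otimes w'))$ and commuting $D$ past the $\mathrm{ad}$-operators, you obtain $-[\partial_V v,w]-(-1)^{|v|}[v,\partial_W w]=D(\eta)+\rho$ with $\eta,\rho\in I_s$ and $\rho$ a $D$-cycle; but you still need $\rho=D(\eta')$ for some $\eta'\in I_s$, and nothing in the proposal supplies this. Note that $I_s$ is not even a subcomplex of $(L,D)$ (since $D(s(v\otimes w))$ has the component $[v,w]\notin I_s$), so ``cycle in $I_s$ implies boundary from $I_s$'' is not a statement about the homology of $I_s$ and cannot be deferred to the end of the argument. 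The standard proof resolves this by interleaving the two halves of your plan: one constructs $D$ degree by degree while simultaneously proving that $\Psi$ restricted to the sub-dgl built so far is a quasi-isomorphism (equivalently, that $\ker\Psi$ is acyclic in the relevant range), and it is precisely this inductive acyclicity that guarantees the residual obstruction cycle bounds inside $\ker\Psi$. As written, your proposal assumes the conclusion of that interleaved induction in both places where it is needed.
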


From now on, we will keep $L$ to design this particular model of a Cartesian product of two spaces. 

A useful result \cite[Lemma 1.4]{BC} that we will use extensively in this work is the following:

\begin{lemma}\label{Quism}
	Let $L:=\left(\mathbb{L} (V\oplus W\oplus s(V\otimes W)), D\right)$ be as in Proposition \ref{Tanre} and $\varphi\colon L\rightarrow (\mathbb{L}(V),\partial_V)\times (\mathbb{L}(W),\partial_W)$ defined, for $v\in V$ and $w\in W$, as $\varphi(v)=v$, $\varphi(w)=w$ and $\varphi(s(v\otimes w))=0$. Then $\varphi$ is a quasi-isomorphism.
\end{lemma}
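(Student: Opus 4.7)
First I would verify that $\varphi$ is a dgl morphism. On $V$ and $W$ the chain-map condition is immediate from the definitions. On $s(v\otimes w)$ it reduces to $\varphi D(s(v\otimes w))=0$: the bracket $[v,w]$ maps to $[\varphi(v),\varphi(w)]=0$ in $\mathbb{L}(V)\times \mathbb{L}(W)$ because mixed brackets vanish in a direct product of Lie algebras, and each summand of $D^+(s(v\otimes w))\in I_s$ contains at least one $s$-generator and hence lies in $\ker\varphi$.

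For the quasi-isomorphism claim I would set up a spectral sequence. Assign weight $0$ to generators in $V\oplus W$ and weight $1$ to each generator $s(v\otimes w)$, extend additively to brackets, and let $F_pL$ be the span of brackets of $s$-weight $\leq p$. The differential $D$ sends $v$ and $w$ into weight-$0$ elements, and sends $s(v\otimes w)$ to $[v,w]$ (weight $0$) plus a correction in $I_s$ (weight $\geq 1$). Filter $\mathbb{L}(V)\times \mathbb{L}(W)$ trivially in weight $0$; then $\varphi$ is filtration-preserving.

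On the first page, the piece of $D$ that lowers weight by $1$ realizes the Koszul-style assignment $s(v\otimes w)\mapsto [v,w]$. Cancelling these acyclic pairs, $E^1$ becomes concentrated in weight $0$ and is identified with $\mathbb{L}(V\oplus W)/([V,W])=\mathbb{L}(V)\times \mathbb{L}(W)$, equipped with the differential $\partial_V\oplus \partial_W$. Since $\varphi$ is filtered and induces precisely this identification on $E^1$, the comparison theorem for convergent spectral sequences gives that $\varphi$ is a quasi-isomorphism.

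The main obstacle is that $D^+(s(v\otimes w))$ can involve brackets of several $s$-generators, so the filtration is only $D$-stable in a degree-bounded sense: in any fixed total degree, a degree count bounds the $s$-weight of the terms appearing in $D^+$. Making this precise requires a careful bounded-per-degree analysis to ensure convergence of the spectral sequence and the vanishing of contributions on higher pages. The safer alternative I would fall back on is to reduce to a finite-dimensional exhaustion $V=\bigcup V^{(n)}$, $W=\bigcup W^{(n)}$ (possible by the finite-type, simply connected hypothesis), handle each finite sub-dgl where the filtration is automatically bounded, and then pass to the colimit.
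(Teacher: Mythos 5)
The paper does not actually prove this lemma --- it is imported from \cite{BC} (Lemma 1.4 there) --- so your argument can only be judged on its own terms. Your verification that $\varphi$ is a dgl morphism is correct, and the overall strategy (compare $L$ with its ``Koszul part'' via a filtration) is the right one. But the filtration you chose is broken for a reason more serious than the convergence issue you flag at the end. The hypothesis of Theorem \ref{Tanre} only puts $D^+(s(v\otimes w))$ in $I_s$, so its terms may contain \emph{two or more} $s$-generators --- and in the model this paper actually constructs they do: the $\star$-terms of formula $(\ref{Differential})$, e.g.\ $\bigl[[bx,c],[ay,z]\bigr]$ in Example \ref{exampleproduct}, have $s$-weight $2$. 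Hence $D(F_1)\not\subseteq F_1$ and your increasing filtration by $s$-weight is not $D$-stable at all, so its spectral sequence is not even defined; a ``bounded-per-degree analysis'' or a finite-dimensional exhaustion addresses convergence, not stability, and does not repair this. (The opposite, decreasing, $s$-weight filtration fails too, since $[v,w]$ has $s$-weight $0$.) Moreover, even on the part where the filtration behaves, the terms of $D^+$ of $s$-weight exactly $1$ --- such as $2[x,s(v\otimes x)]$ in Example \ref{cartesianproducts}(ii) --- would land in $d^0$ together with $\partial_V\oplus\partial_W$, so $E^1$ cannot be identified without knowing $D^+$. The standard repair is to filter instead by the weight that counts each generator of $V\oplus W$ once and each generator $s(v\otimes w)$ twice: by minimality every term of $\partial_V$, $\partial_W$ and $D^+$ strictly raises this weight, and the weight-preserving part of $D$ is exactly the quadratic map $\delta$ with $\delta(s(v\otimes w))=[v,w]$ and $\delta|_{V\oplus W}=0$; this decreasing filtration is $D$-stable and bounded in each degree.

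Even with the corrected filtration there remains a second gap: the identification of the resulting first page, namely that $\bigl(\mathbb{L}(V\oplus W\oplus s(V\otimes W)),\delta\bigr)$ has homology exactly $\mathbb{L}(V)\times\mathbb{L}(W)=\mathbb{L}(V\oplus W)/([V,W])$ concentrated in $s$-weight $0$, is the actual mathematical content of the lemma (it \emph{is} the lemma in the special case $\partial_V=\partial_W=0$, $D^+=0$, i.e.\ Theorem \ref{Lupton-Smith} for two wedges of spheres), and you dispose of it by ``cancelling these acyclic pairs.'' The elements $s(v\otimes w)$ and the brackets $[v,w]$ do not form free acyclic pairs inside a free Lie algebra; one must genuinely prove that the ideal $\ker\varphi$, generated by $s(V\otimes W)$ and $[V,W]$, is $\delta$-acyclic. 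A clean route is to apply the universal enveloping algebra functor, where $UL\cong T(V\oplus W\oplus s(V\otimes W))$ with $\delta$ becomes a Koszul-type resolution of $T(V)\otimes T(W)\cong U\bigl(\mathbb{L}(V)\times\mathbb{L}(W)\bigr)$, and then use that a map of reduced free dgls is a quasi-isomorphism if and only if its enveloping map is. Without a $D$-stable filtration and without this computation, the proposal is a plausible outline rather than a proof.
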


\begin{example} \label{cartesianproducts} Some examples of minimal Quillen models for Cartesian products are the following:
\begin{itemize}
\item[(i)]The minimal Quillen model of the product of spheres $S^n\times S^m$ is given by $(\mathbb{L}(v , w, s(v\otimes w)), D)$ where $|v|=n-1$, $|w|=m-1$ with $Dv=Dw=0$ and $D(s(v\otimes w))=[v,w]$.
\item[(ii)] The minimal Quillen model of the product $S^n \times \mathbb{C}P^2$ is given by $$L=(\mathbb{L}(v , x, y, s(v\otimes x), s(v\otimes y)), D),$$ where $|v|=n-1$, $|x|=1$, $|y|=3$ with $Dv=Dx=0$, $Dy =[x, x]$, $D(s(v\otimes x))=[v,x]$ and $D(s(v\otimes y))=[v , y] + 2[x, s(v\otimes x)]$.

We have describe in Example \ref{minimalQuillenmodels}, the minimal Quillen model of $S^n$ and $\mathbb{C}P^2$, so it is only necessary to check the definition of $D$ in generators $s(v\otimes x)$ and $s(v\otimes y)$.

On the one hand $D(v\otimes x)=[v , x]$ defines a differential since $Dv = Dx = 0$, but on the other hand $D(v\otimes y) = [v , y]$ would not satisfies $D^2=0$. 

Indeed, $$D[v , y]=[Dv , y]+(-1)^{n-1}[v, Dy] =-(-1)^{n}[v, [x, x]]\not= 0.$$
Nevertheless, since $[v , [x ,x]]= 2[[v, x], x]$ by Jacobi identity and $s(v\otimes x)=[v ,x]$, the definition $D(s(v\otimes y))=[v , y] + 2[x, s(v\otimes x)]$ satisfies
\begin{align*}
D^2 (s(v\otimes y))&=D[v, y]+(-1)^n 2[ x, D(s(v\otimes x))]\\
&=-(-1)^n2[[v, x], x] +(-1)^n2[[v, x], x]=0.
\end{align*}

Note that the fact that $L$ is a model for $S^n\times \mathbb{C}P^2$ is provided by Lemma \ref{Quism}.
\end{itemize}
\end{example}

Some derivations that will be essential in the description of differential $D$ in the explicit models of Cartesian products are defined as follows. As usual, the definition of a derivation of a free Lie algebra $\mathbb{L}(U)$ is given on $U$ and extended to $\mathbb{L}(U)$ by the Leibniz rule. 

\begin{definition}\label{derivations}
Let $v\in V$ and $w\in W$. Define $\sigma_ v , \sigma_w \in \text{Der} (L)$ by  $\sigma _v(w)=s(v\otimes w)$, $\sigma_w (v)=(-1)^{|v||w|}s(v\otimes w)$ and
$$\sigma_v (v')=\sigma_v(s(v'\otimes w))=\sigma_w (w')=\sigma_w (s(v\otimes w'))=0.$$
Note that $|\sigma_v | = |v|+1$.

For any $A \in \mathbb{L}(V\oplus W)$ we define $\sigma _A \in \text{Der}(L)$ by $\sigma_A (v)=(-1)^{|v||A|}\sigma_v (A)$, $\sigma_A (w)=(-1)^{|w||A|}\sigma_w(A)$ and $\sigma_A (s(v\otimes w))=0$.
\end{definition}

Example \ref{cartesianproducts}(ii) can be generalized to the following theorem due to G. Lupton and S. Smith: 

\begin{theorem}\cite[Theorem 3.3]{Lupton-Smith} \label{Lupton-Smith}
If $X$ is a co-H-space and $Y$ is any simply connected space with Quillen minimal models $(\mathbb{L}(V), 0)$ and $(\mathbb{L}(W), \partial)$ respectively, then, the minimal Quillen model of the product $X\times Y$ is given by $\Bigl(\mathbb{L}(V\oplus W \oplus s(V\otimes W)), D\Bigr)$, where $Dv=0$, $Dw=\partial w$ and
\begin{equation}\label{diferentialLuptonSmith}
D(s(v\otimes w))=[v, w]-(-1)^{|v|}\sigma_v(\partial w ).
\end{equation}
\end{theorem}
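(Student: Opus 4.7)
The plan is to verify that the proposed $D$ defines a differential (the only nontrivial condition being $D^2=0$ on the generators $s(v\otimes w)$), check that it has the form required by Theorem~\ref{Tanre}, and then invoke Lemma~\ref{Quism} to obtain the quasi-isomorphism. Minimality is immediate: $Dv=0$, $Dw=\partial w\in \mathbb{L}^{\ge 2}(W)$ by minimality of $(\mathbb{L}(W),\partial)$, and both summands of $D(s(v\otimes w))$ lie in $\mathbb{L}^{\ge 2}$. Moreover, since $\partial w\in \mathbb{L}^{\ge 2}(W)$, the Leibniz expansion of $\sigma_v(\partial w)$ is a sum of iterated brackets each containing at least one letter of the form $s(v\otimes w')$, so $-(-1)^{|v|}\sigma_v(\partial w)\in I_s=\mathbb{L}(V\oplus W)*\mathbb{L}^+(s(V\otimes W))$, which is what Theorem~\ref{Tanre} requires of $D^+$.

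For $D^2=0$, since $D^2=\tfrac12[D,D]$ is a derivation, it suffices to check it on generators. On $V$ and $W$ it vanishes trivially. On $s(v\otimes w)$, I would expand
\begin{equation*}
D^2(s(v\otimes w))=D[v,w]-(-1)^{|v|}D\sigma_v(\partial w)=(-1)^{|v|}[v,\partial w]-(-1)^{|v|}D\sigma_v(\partial w),
\end{equation*}
using $Dv=0$ and $Dw=\partial w$. So the whole computation reduces to proving $D\sigma_v(\partial w)=[v,\partial w]$.

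The key step is a commutator identity. Set $[D,\sigma_v]=D\sigma_v+(-1)^{|v|}\sigma_v D$, which is a derivation of $L$ of degree $|v|$. Since $D$ coincides with $\partial$ on $\mathbb{L}(W)$, we have $D(\partial w)=\partial^2 w=0$, hence $D\sigma_v(\partial w)=[D,\sigma_v](\partial w)$. Now I evaluate $[D,\sigma_v]$ on a generator $w'\in W$ using the defining formula for $D$:
\begin{equation*}
[D,\sigma_v](w')=D(s(v\otimes w'))+(-1)^{|v|}\sigma_v(\partial w')=[v,w']-(-1)^{|v|}\sigma_v(\partial w')+(-1)^{|v|}\sigma_v(\partial w')=[v,w'].
\end{equation*}
Thus the derivations $[D,\sigma_v]$ and $\mathrm{ad}_v$ out of the free Lie algebra $\mathbb{L}(W)$ agree on the generating set $W$, so they agree on all of $\mathbb{L}(W)$; in particular $[D,\sigma_v](\partial w)=[v,\partial w]$. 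Substituting gives $D^2(s(v\otimes w))=0$.

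With $D^2=0$ established, the model fits the framework of Theorem~\ref{Tanre}, and Lemma~\ref{Quism} applied to $\varphi=\Psi$ yields that $\Psi$ is a quasi-isomorphism onto $(\mathbb{L}(V),0)\times(\mathbb{L}(W),\partial)$. The main obstacle is the bookkeeping in the commutator identity for $[D,\sigma_v]$: the sign conventions (with $|\sigma_v|=|v|+1$ and $|D|=-1$) must be tracked carefully, and one must recognize that the crucial cancellation in $[D,\sigma_v](w')=[v,w']$ is exactly the reason why the correction term $-(-1)^{|v|}\sigma_v(\partial w)$ appears in \eqref{diferentialLuptonSmith} in the first place.
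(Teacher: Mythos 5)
Your proposal is correct and follows essentially the same route as the paper: both arguments hinge on showing that the derivations $[D,\sigma_v]$ and $\mathrm{ad}_v$ agree on the generators $W$ and hence on all of $\mathbb{L}(W)$, and then apply this to $\partial w$ to get the cancellation in $D^2(s(v\otimes w))$. The only cosmetic difference is that you first reduce $D\sigma_v(\partial w)$ to $[D,\sigma_v](\partial w)$ via $D(\partial w)=0$ before invoking the identity, whereas the paper substitutes the identity and unwinds the commutator at the end; the content is identical.
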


\begin{proof}
Note that $D^+(s(v\otimes w))=-(-1)^{|v|}\sigma_v(\partial w)\in I_s$ by definition of the derivation $\sigma_v$. Then, by Lemma \ref{Quism} it is only necessary to check that $D^2(s(v\otimes w))=0$. Given $v\in V$, consider the derivations $$\text{ad}_v,  [ D, \sigma_v ]\in \text{Der}(L),$$ both of degree $|v|$. 

These derivations are essentially different. Indeed, if $v'\in V$ we have
\begin{align*}
[D, \sigma_v](v')&=D \sigma_v (v') - (-1)^{|v|+1}\sigma_v D (v')=0\\
\text{ad}_v(v')&=[v, v'],
\end{align*}
where we have used  $\sigma_v (v')=0$ and $Dv'=0$.
Nevertheless, for $w\in W$ we have that
\begin{align*}
[D, \sigma_v](w)&=D \sigma_v (w) - (-1)^{|v|+1}\sigma_v D (w)\\
&=D(s(v\otimes w)) + (-1)^{|v|}\sigma_v (\partial w)\\
&=[v, w]-(-1)^{|v|}\sigma_v(\partial w)+(-1)^{|v|}\sigma_v(\partial w)\\
&=\text{ad}_v (w).
\end{align*}
Since $\text{ad}_v,  [ D, \sigma_v ]\in \text{Der}(L)$ we have that 
\begin{equation*}
[D, \sigma_v](A) =\text{ad}_v (A),\ \text{for any } A \in \mathbb{L}(W)\subset L.
\end{equation*}
Thus, as $\partial w \in \mathbb{L}(W)$ and $\partial v=0$, we can conclude that
\begin{align*}
D^2(s(v\otimes w))&=D\Bigl( \text{ad}_v(w)-(-1)^{|v|}\sigma_v (\partial w)\Bigr)\\
&=(-1)^{|v|}\text{ad}_v(\partial w)-(-1)^{|v|}D\sigma_v (\partial w)\\
&=(-1)^{|v|}[D, \sigma_v](\partial w)-(-1)^{|v|}D\sigma_v (\partial w)\\
&=(-1)^{|v|}\Bigl( D \sigma_v(\partial w) + (-1)^{|v|}\sigma_v  D(\partial w)\Bigl)-(-1)^{|v|}D\sigma_v (\partial w)\\
&=\sigma_ v (\partial \partial w)+ (-1)^{|v|}D(\sigma_v (\partial w))-(-1)^{|v|}D(\sigma_v (\partial w))=0.
\end{align*}
\end{proof}

\section{Cartesian product of $2$-cones}
The goal of this section is the explicit description of the minimal Quillen model of the Cartesian product $X\times Y$ in the case of $X$ and $Y$ being $2$-cones. In other terms, let $(\mathbb{L}(V), \partial_V)$ and $(\mathbb{L}(W), \partial_W)$ be the minimal Quillen models of $X$ and $Y$ respectively, where $V=V_0\oplus V_1$ and $W=W_0\oplus W_1$ with $\partial _V (V_0) = \partial _W (W_0)=0$ and $\partial (V_1)\subset \mathbb{L}(V_0)$ and $\partial (W_1)\subset \mathbb{L}(W_0)$. Then, we will give a formula for the differential $D$ of the minimal Quillen model of $X\times Y$ of the form $(L, D)=\Bigl(\mathbb{L}(V\oplus W \oplus s(V\otimes W)), D\Bigr)$ of Theorem \ref{Tanre}.


\subsection{Derivations and differential on $L_1$}

We start defining the differential $D$ on 
$$L_1 = \mathbb{L}(V\oplus W \oplus s(V_0 \otimes W_0) \oplus s(V_1\otimes W_0)\oplus s(V_0 \otimes W_1)),$$
in the same way as in Theorem \ref{Lupton-Smith}:

\begin{definition}\label{L1}
Let $s(v\otimes w)\in s(V_0\otimes W_0)\oplus s(V_1\otimes W_0)\oplus s(V_0\otimes W_1)$. The differential $D$ is defined by

$$
Ds(v\otimes w)= \left\{ \begin{array}{lcc}
            \ [v, w] &   \text{if}  & v\in V_0, w\in W_0, \\
\ [v, w]-(-1)^{(|v|+1)|w|}\sigma_w (\partial v) &  \text{if} & v\in V_1, w\in W_0, \\
             \ [v, w]-(-1)^{|v|}\sigma_v (\partial w) &  \text{if}  &v\in V_0, w\in W_1.
             \end{array}
   \right.
   $$
\end{definition}

The next lemma is established in the same way as Theorem \ref{Lupton-Smith}. 
\begin{lemma}
With the above definition $(L_1, D)$ is a dgl.
\end{lemma}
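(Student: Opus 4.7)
My plan is to extend $D$ as a degree $-1$ derivation of the free Lie algebra $L_1$ (so graded antisymmetry and Leibniz hold automatically) and then verify $D^2=0$ on generators. For generators $v\in V$ and $w\in W$, the identities $D^2v=\partial_V^2v=0$ and $D^2w=\partial_W^2w=0$ handle them directly. For $s(v\otimes w)\in s(V_0\otimes W_0)$, one has $Dv=Dw=0$, so $D^2(s(v\otimes w))=D[v,w]=0$ by a one-line Leibniz computation. The two remaining types of generators require more care.

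For the case $v\in V_1$, $w\in W_0$, my approach mirrors the proof of Theorem \ref{Lupton-Smith} with the roles of the two factors swapped. The key is to compare the two derivations $\mathrm{ad}_w,\;[D,\sigma_w]\in\mathrm{Der}_{|w|}(L_1)$. Evaluating $[D,\sigma_w]$ on a generator $v'\in V_0$ gives
\[
[D,\sigma_w](v')=D\bigl((-1)^{|v'||w|}s(v'\otimes w)\bigr)=(-1)^{|v'||w|}[v',w],
\]
since $Dv'=0$ and $\sigma_w(Dv')=0$; graded antisymmetry then rewrites this as $-[w,v']=-\mathrm{ad}_w(v')$. Because both $[D,\sigma_w]$ and $\mathrm{ad}_w$ are derivations of $L_1$ preserving $\mathbb{L}(V_0)$ and agreeing up to the sign $-1$ on the generators $V_0$, one concludes $[D,\sigma_w]=-\mathrm{ad}_w$ on all of $\mathbb{L}(V_0)$. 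Applied to $\partial v\in\mathbb{L}(V_0)$, together with $D(\partial v)=\partial_V^2 v=0$, this gives $D\sigma_w(\partial v)=-[w,\partial v]$. Substituting into
\[
D^2(s(v\otimes w))=[\partial v,w]-(-1)^{(|v|+1)|w|}D\sigma_w(\partial v)
\]
and using graded antisymmetry (noting that $(|v|-1)|w|$ and $(|v|+1)|w|$ have the same parity) collapses the expression to $0$.

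For the case $v\in V_0$, $w\in W_1$, the defining formula $D(s(v\otimes w))=[v,w]-(-1)^{|v|}\sigma_v(\partial w)$ is literally that of Theorem \ref{Lupton-Smith}; since $\partial w\in\mathbb{L}(W_0)$ and $D$ is trivial on $V_0\oplus W_0$, the Lupton--Smith argument carries over verbatim.

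The main obstacle will be the sign bookkeeping in the second case: the convention $\sigma_w(v)=(-1)^{|v||w|}s(v\otimes w)$ from Definition \ref{derivations} introduces an extra sign absent from the Lupton--Smith setting, which is precisely what forces the coefficient $(-1)^{(|v|+1)|w|}$ in Definition \ref{L1} rather than a simple $(-1)^{|v|}$, and which governs the final cancellation.
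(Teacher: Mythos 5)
Your proof is correct and is precisely the argument the paper intends: the paper gives no details, stating only that the lemma ``is established in the same way as Theorem \ref{Lupton-Smith}'', and your case-by-case comparison of the derivations $[D,\sigma_w]$ and $-\mathrm{ad}_w$ on $\mathbb{L}(V_0)$ (respectively $[D,\sigma_v]$ and $\mathrm{ad}_v$ on $\mathbb{L}(W_0)$) is exactly that adaptation, reappearing later in the paper as equation (\ref{previous}) in the proof of Lemma \ref{firstLemma}. The sign bookkeeping you flag, including the parity identity $(|v|-1)|w|\equiv(|v|+1)|w| \pmod 2$, checks out.
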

\begin{remark}[\bf The tree notation] \label{The tree notation}
To help the reader follow the somewhat technical proofs and tedious examples, we provide a visual idea of the formulas in terms of trees with the following convention: 

Any element of $\mathbb{L}(V\oplus W\oplus s(V\otimes W))$ can be written as a planar binary rooted tree (representing the bracketing) with leaves labelled on the elements of $V\oplus W \oplus s(V\otimes W)$.

For example, the bracket $\Bigl[ [a,b],[c,d]\Bigr]$ is represented by the tree $\stackrel{a\hskip .1cm b\hskip .1cm c\hskip .1cm d}{\hskip .1cm \RS{{LL{L}{R}}{RR{R}L}} \hskip .1cm }$.

The Jacobi identity $[a, [b, c]]=[[a, b], c]+(-1)^{|a||b|}[b, [a, c]]$ can be usefully expressed in terms of trees as follows: 

$$\stackrel{ \text{\Large $a$}\hskip .5cm \text{\Large $b$}\hskip .6cm \text{\Large $c$}}{\hskip .1cm \RS{{LLL{LLL}}{RRR{RRR}LLL}} \hskip .1cm } \hskip.2cm = \hskip .2cm\stackrel{ \hskip .1cm\text{\Large $a$}\hskip .6cm \text{\Large $b$}\hskip .5cm \text{\Large $c$}}{\hskip .1cm \RS{{LLL{LLL}RRR}{RRR{RRR}}}  }  \hskip .2cm +(-1)^{|\text{\large $a$}||\text{\large $b$}|}\stackrel{ \hskip .1cm \text{\Large $b$}\hskip .6cm \text{\Large $a$}\hskip .5cm \text{\Large $c$}}{\hskip .1cm \RS{{LLL{LLL}}{RRR{RRR}LLL}} }.$$
\vskip.3cm
If we have two elements $S, T \in \mathbb{L}(V\oplus W)$, then the derivation $\sigma _S$ applied on $T$ will be represented by 
$$
 \xymatrixcolsep{.5pc}
\xymatrixrowsep{.5pc}
\entrymodifiers={=<1pc>} \xymatrix{
 &&S\ar@{-}[d]& \\
 \sigma_S(T)&\ \ \ =&*{}\ar@{-}[d]&,\\
&&T&} $$
which can be understood recursively by the following ``grafting'' of trees:
\vskip.1cm
$$
 \xymatrixcolsep{.5pc}
\xymatrixrowsep{.5pc}
\entrymodifiers={=<1pc>} \xymatrix{
 S\ar@{-}[d]&&&&&&&&&&\hskip.3cmS\ar@{-}@<3pt>[d]&&\hskip.2cmS\ar@{-}@<2pt>[d]&&&&\hskip.2cmS\ar@{-}@<2pt>[d]\\
*{}\ar@{-}[d]&=&\ \ \sigma_S(b)&\hskip.4cm =&\hskip 2cm (-1)^{|S||b|}\sigma_b(S),&\hskip4.1cm \text{ and }&&&&&\hskip .3cm\aoverbrace[L1R]{T\ \ \ \ T'}&\hskip.7cm=&\hskip.2cm\aoverbrace[]{T}&\hskip.4cm\aoverbrace[]{T'}&\hskip.2cm\pm&\aoverbrace[]{T}&\hskip.2cm\aoverbrace[]{T'.}\\
b&&&&&&&&&&\hskip.15cm\RS{{LLL}{RRR}}&&\hskip.8cm\RS{{LLL}{RRR}}&&&\hskip.6cm\RS{{LLL}{RRR}}&}$$

Intuitively, since $\sigma_S$ is a derivation, the graft of a tree $S$ onto another tree $T$ runs through each leaf $b$ of $T$ applying $\sigma_S$, which in turn runs through each leaf $a$ of tree $S$ applying $\sigma_b$ (and recall that $\sigma_a (b)=s(a\otimes b)$ if $a\in V$ and $b\in W$, $\sigma_a (b)=(-1)^{|a||b|}s(b\otimes a)$ if $b\in V$ and $a\in W$ or $\sigma_a (b)=0$ in other case). 

\vskip.3cm
We can describe the differential of Definition \ref{L1} with tree notation as follows. When $v\in V_0$ and $w\in W_1$ we have


$$
 \xymatrixcolsep{.5pc}
\xymatrixrowsep{.5pc}
\entrymodifiers={=<1pc>} \xymatrix{
&&&{v}\ar@{-}[rd]&&\ar@{-}[ld]{w} &    &v\ar@{-}[d]&    \\
Ds(v\otimes w)&&=&&*{}\ar@{}[d]&     & + &*{}\ar@{-}[d]&.\\
&&&&& & & \partial w&
}$$



\end{remark}

\begin{lemma} \label{firstLemma}
For $S\in \mathbb{L}(V)$ and $T\in \mathbb{L}(W)$ we have the following equalities of derivations
\begin{enumerate}
\item [(i)] $[D, \sigma_S ] = \text{ad}_S -\sigma_{\partial S}$ on $\mathbb{L}(W_0)$,
\item [(ii)] $[D, \sigma_T ] = -\text{ad}_T -\sigma_{\partial T}$ on $\mathbb{L}(V_0)$.
\end{enumerate}
\end{lemma}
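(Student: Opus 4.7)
The plan is to establish (i); part (ii) then follows by a symmetric argument, the substantive difference being that the sign in $\sigma_w(v)=(-1)^{|v||w|}s(v\otimes w)$ (as opposed to $\sigma_v(w)=s(v\otimes w)$) is precisely what converts the $+\text{ad}$ of (i) into the $-\text{ad}$ of (ii). Both sides of (i) are $\mathbb{Q}$-linear in $S$ and are derivations of $L_1$ of degree $|S|$, so by the derivation property it suffices to verify the equality on the generators $w\in W_0$ of $\mathbb{L}(W_0)$; moreover, by linearity in $S$ we may further assume $S$ is a bracket monomial in $\mathbb{L}(V)$ and proceed by induction on its bracket length.

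For the base case $S=v\in V$, the key point is that $Dw=0$ for $w\in W_0$, so $[D,\sigma_v](w)=Ds(v\otimes w)$. If $v\in V_0$, both sides reduce to $[v,w]$. If $v\in V_1$, a short sign check using $|\partial v|=|v|-1$ gives $\sigma_{\partial v}(w)=(-1)^{(|v|+1)|w|}\sigma_w(\partial v)$, so Definition \ref{L1} yields $Ds(v\otimes w)=\text{ad}_v(w)-\sigma_{\partial v}(w)$, as required.

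For the inductive step $S=[A,B]$, the first task is to unpack $\sigma_{[A,B]}(w)$: applying the Leibniz rule for $\sigma_w$ to $[A,B]$ and using $\sigma_w(X)=(-1)^{|X||w|}\sigma_X(w)$ for $X\in\mathbb{L}(V)$ yields
\[
\sigma_{[A,B]}(w)=(-1)^{|w||B|}[\sigma_A(w),B]+(-1)^{|A|}[A,\sigma_B(w)].
\]
Since $Dw=0$, we have $[D,\sigma_{[A,B]}](w)=D\sigma_{[A,B]}(w)$, and applying the Leibniz rule for $D$ produces four terms. The two carrying $D\sigma_A(w)$ and $D\sigma_B(w)$ are rewritten by the inductive hypothesis as $[A,w]-\sigma_{\partial A}(w)$ and $[B,w]-\sigma_{\partial B}(w)$ respectively. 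The iterated-bracket pieces $(-1)^{|w||B|}[[A,w],B]$ and $[A,[B,w]]$ combine into $[[A,B],w]=\text{ad}_{[A,B]}(w)$ by graded antisymmetry and Jacobi; the remaining four pieces, involving $\sigma_{\partial A}(w)$, $\sigma_{\partial B}(w)$, $\partial A$, and $\partial B$, reassemble into $-\sigma_{\partial[A,B]}(w)$ after reapplying the displayed formula to $[\partial A,B]$ and $[A,\partial B]$, then invoking $\partial[A,B]=[\partial A,B]+(-1)^{|A|}[A,\partial B]$ together with the linearity of $\sigma_{(\cdot)}$ in its subscript. The principal obstacle is the Koszul-sign bookkeeping in this reassembly: nothing deeper than Leibniz and Jacobi is used, but the four terms pair up as required only because the signs built into Definition \ref{derivations} are precisely those needed for the Jacobi/Leibniz cancellation to go through.
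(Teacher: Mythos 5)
Your proof is correct, but it reaches the conclusion by a different route than the paper in the key step. Both arguments end the same way: since $[D,\sigma_S]$ and $\mathrm{ad}_S-\sigma_{\partial S}$ are derivations of the same degree, it suffices to check equality on the generators $w\in W_0$. Where you differ is in how you handle $[D,\sigma_S](w)$ for a general $S\in\mathbb{L}(V)$. The paper avoids your induction on the bracket length of $S$ entirely: it first proves the auxiliary identity $[D,\sigma_w]=-\mathrm{ad}_w$ on all of $\mathbb{L}(V)$ for a fixed $w\in W_0$ (again by checking on generators $v\in V$, using Definition \ref{L1} exactly as in your base case), and then transposes via $\sigma_S(w)=(-1)^{|S||w|}\sigma_w(S)$ and $Dw=0$ to get $[D,\sigma_S](w)=(-1)^{|S||w|}D\sigma_w(S)$, from which (i) drops out in two lines. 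In effect, the structural induction over $S$ that you carry out by hand (Leibniz, Jacobi, and the reassembly of the four $\sigma_{\partial A},\sigma_{\partial B},\partial A,\partial B$ terms into $-\sigma_{\partial[A,B]}(w)$) is absorbed by a second application of the ``derivations agreeing on generators'' principle, this time on the $V$-side. Your computation is sound --- I checked that the identity $\sigma_{[A,B]}(w)=(-1)^{|w||B|}[\sigma_A(w),B]+(-1)^{|A|}[A,\sigma_B(w)]$ and the sign bookkeeping in the inductive step all come out right, and this identity is in fact the same one the paper uses later in Lemma \ref{secondformulalemma} --- but the paper's transposition trick buys a substantially shorter argument with only one nontrivial sign verification, at the cost of the slightly less transparent intermediate statement $[D,\sigma_w]=-\mathrm{ad}_w$ on $\mathbb{L}(V)$. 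Your heuristic for deducing (ii) from (i) by tracking the sign asymmetry in Definition \ref{derivations} matches the paper's ``a similar computation'' remark.
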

\begin{proof}
Let $w\in W_0$ and consider the derivations $[D, \sigma_w ], -\text{ad}_w \in \text{Der}(L_1)$. If $v\in V$ we have
\begin{align*}
[D, \sigma _w ](v)&=D \sigma_w (v)-(-1)^{|w|+1}\sigma_w D (v)\\
&=(-1)^{|v||w|}Ds(v\otimes w)+(-1)^{|w|}\sigma_w (\partial v)\\
&=(-1)^{|v||w|}[v, w]-(-1)^{|w|}\sigma_w (\partial v)+(-1)^{|w|}\sigma_w (\partial v)\\
&=(-1)^{|v||w|}[v, w]=-\text{ad}_w(v).
\end{align*}
Since $[D, \sigma_w]$ and $-\text{ad}_w$ agree in $V$, and $S \in \mathbb{L}(V)$, we have that 
\begin{equation*}
[D, \sigma_w](S) =-\text{ad}_w (S),
\end{equation*} 
which can be written as
\begin{equation} \label{previous}
D \sigma_w(S)=\Bigl( (-1)^{|w|+1}\sigma_w D -\text{ad}_w \Bigr) (S).
\end{equation}

In order to prove (i), we evaluate the derivations $[D, \sigma_S], \text{ad}_S -\sigma_{\partial S} \in \text{Der}(L_1)$, on $w\in W_0$.
\begin{align*}
[D, \sigma_S](w)&=D \sigma_S (w)-(-1)^{|S|+1}\sigma_S D (w)\\
&=(-1)^{|S||w|}D \sigma_w (S).\\
(\text{ad}_S-\sigma_{\partial S})(w)&=[S, w]-\sigma_{\partial S}(w)\\
&=-(-1)^{|S||w|}\text{ad}_w(S)-(-1)^{(|S|+1)|w|}\sigma_w (\partial S)\\
&=(-1)^{|S||w|}\Bigl( (-1)^{|w|+1}\sigma_w  D -\text{ad}_w \Bigr) (S).
\end{align*}

By equation (\ref{previous}) we have $[D, \sigma_S](w)= (\text{ad}_S-\sigma_{\partial S})(w)$ for any $w\in W_0$ and we conclude that
\begin{equation*}
[D, \sigma_S](T) =(\text{ad}_S-\sigma_{\partial S})(T), \text{ for any } T \in \mathbb{L}(W_0).
\end{equation*}
A similar computation shows that (ii) also holds.
\end{proof}

\begin{remark}\label{iii and iv} (i) and (ii) of Lemma \ref{firstLemma} are equivalent to:
\begin{enumerate}
\item[(iii)] $D(\sigma_S(T))=[S, T] -\sigma_{\partial S}(T)$, if $T\in \mathbb{L}({W_0})$,
\item[(iv)] $D(\sigma_T(S)) = (-1)^{|S||T|}[S, T] -\sigma_{\partial T}(S)$, if $S\in \mathbb{L}(V_0)$,
\end{enumerate}

respectively.
With the tree notation, case (iii) can be written as:

\begin{equation}\label{iii tree}
 \xymatrixcolsep{.5pc}
\xymatrixrowsep{.5pc}
\entrymodifiers={=<1pc>} \xymatrix{
&S\ar@{-}[dd]&&{S}\ar@{-}[rd]&&\ar@{-}[ld]{T} &    &\partial S\ar@{-}[d]&    \\
D\Bigl(&\hskip1cm\Bigr)&\hskip.5cm=&&*{}\ar@{}[d]&     &- &*{}\ar@{-}[d]&.\\
&T&&&& & & T&
}
\end{equation}
\end{remark}

\begin{remark}
The above tree notation formula can be understood in an informal visual way as follows: first, if $S\in \mathbb{L}(V_0)$ and $w\in W_0$ we can compute easily

$$
 \xymatrixcolsep{.5pc}
\xymatrixrowsep{.5pc}
\entrymodifiers={=<1pc>} \xymatrix{
&S\ar@{-}[dd]&&&&&&w\ar@{-}[dd]&&&&{w}\ar@{-}[rd]&&\ar@{-}[ld]{S}&&{S}\ar@{-}[rd]&&\ar@{-}[ld]{w}& \\
D\Bigl(&&\Bigr)=&&(-1)^{|w||S|}&&D\Bigl(&&\Bigr)=&&-(-1)^{|w||S|}&&*{}\ar@{}[d]&&=&&*{}\ar@{}[d]&&. \\
&w&&&&&&S&&&&&&&&&&&
}$$

Indeed, since $\sigma_S (w)=(-1)^{|w||S|}\sigma_w (S)$, the differential $D$ runs through the leaves $v$ of tree $S$ vanishing, until it reaches $\sigma_w (v)=(-1)^{|v||w|}s(v\otimes w)$ giving rise to $(-1)^{|v||w|}[v, w] =-\text{ad}_w (v)$. Therefore, we obtain the derivation $-\text{ad}_w$ moving through the leaves of tree $S$, that is, the formula on the right.

Next, if $S\in \mathbb{L}(V)$ and $w\in W_0$, $D$ runs through the leaves $v$ of the tree $S$ with graft $\sigma_w$, as before, but this time $Dv = \partial v$ on each ungrafted leaf of the tree until reaching $\sigma_w (v)$, giving rise to 
$$(-1)^{|w||S|}D\sigma_w (v)=(-1)^{|w|(|S|+|v|)}[v, w]-(-1)^{|w|(|S|+1)}\sigma_w (\partial v).$$

On the one hand, the term $(-1)^{|w|(|S|+|v|)}[v, w]=-(-1)^{|w||S|}\text{ad}_w (v)$ gives rise to $-(-1)^{|w||S|}\text{ad}_w (S)$ as in the previous case. On the other hand the non-vanishing terms give rise to $-(-1)^{|w|(|S|+1)}\sigma_w (\partial S)=-\sigma_{\partial S}(w)$.
$$
 \xymatrixcolsep{.5pc}
\xymatrixrowsep{.5pc}
\entrymodifiers={=<1pc>} \xymatrix{
&S\ar@{-}[dd]&&&&&&w\ar@{-}[dd]&&&&{w}\ar@{-}[rd]&&\ar@{-}[ld]{S}&&&&w\ar@{-}[dd]& \\
D\Bigl(&&\Bigr)=&&(-1)^{|w||S|}&&D\Bigl(&&\Bigr)=&&-(-1)^{|w||S|}&&*{}\ar@{}[d]&&&-(-1)^{|w|(|S|+1)}&&& \\
&w&&&&&&S&&&&&&&&&&\partial S& \\
&&&&&&&&&&&S\ar@{-}[rd]&&w\ar@{-}[ld]&&&&\partial S\ar@{-}[dd]& \\
&&&&&&&&=&&&&*{}\ar@{}[d]&&&-&&&. \\
&&&&&&&&&&&&&&&&&w&
}$$
Finally, if $S\in \mathbb{L}(V)$ and $T\in \mathbb{L}(W_0)$, in order to compute 
$$
 \xymatrixcolsep{.5pc}
\xymatrixrowsep{.5pc}
\entrymodifiers={=<1pc>} \xymatrix{
&S\ar@{-}[dd]    \\
D\Bigl(&\hskip1cm\Bigr),\\
&T
}
$$
we have the differential of tree $T$ with graft $\sigma_S$ on each leaf $w$. $D$ runs through the leaves $w$ of $T$ vanishing until it reaches $\sigma_S (w)$ giving rise to 
$$\text{ad}_S (w)-\sigma_{\partial S} (w)$$
as in the previous case. The result is precisely the derivation $\text{ad}_S$ applied to tree $T$ minus the derivation $\sigma_{\partial S}$ applied to tree $T$, that is, the formula $(\ref{iii tree})$.
\end{remark}
We now study the composition of two derivations $\sigma_A, \sigma_B\in \text{Der}(L)$ for $A, B\in \mathbb{L}(V)$.

\begin{lemma}\label{composition2derivations}
For $A, B\in \mathbb{L}(V), w\in W$ and $X\in L$ we have:
\begin{enumerate}
\item[(i)] $\sigma_A \sigma_B (w)=0,$
\item[(ii)] $\sigma_A \sigma_B(X)=(-1)^{(|A|+1)(|B|+1)}\sigma_B \sigma_A (X)$, or equivalently $[\sigma_A, \sigma_B]=0$.
\end{enumerate}
\end{lemma}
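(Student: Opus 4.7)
The plan is to exploit a single observation about the derivations from Definition~\ref{derivations}: the derivation $\sigma_v$ (for $v \in V$) vanishes on the generating set $V \cup s(V \otimes W)$, and by the derivation property therefore vanishes on all of the sub-Lie-algebra $\mathbb{L}(V \oplus s(V\otimes W))$; in particular $\sigma_v(A) = 0$ for every $A \in \mathbb{L}(V)$. Consequently, for $A \in \mathbb{L}(V)$ and any $v \in V$ we get $\sigma_A(v) = (-1)^{|v||A|}\sigma_v(A) = 0$, i.e. $\sigma_A$ annihilates every $V$-generator. This is the main tool.

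For (i), I would first rewrite $\sigma_B(w) = (-1)^{|B||w|}\sigma_w(B)$ and expand $\sigma_w(B)$ by the derivation property. Since $B \in \mathbb{L}(V)$ and $\sigma_w$ sends each $v\in V$ to $(-1)^{|v||w|}s(v\otimes w) \in s(V \otimes W)$ while vanishing on $s(V\otimes W)$, the element $\sigma_w(B)$ is a finite sum of bracketed monomials whose leaves lie in $V \cup s(V\otimes W)$, each summand containing exactly one $s(V\otimes W)$-leaf. Applying the derivation $\sigma_A$ and distributing across these leaves kills every term: the $V$-leaves contribute $0$ by the observation above, and the unique $s(V\otimes W)$-leaf contributes $0$ directly from the definition of $\sigma_A$. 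Hence $\sigma_A\sigma_B(w)=0$.

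For (ii), the graded commutator $[\sigma_A,\sigma_B]$ of two derivations of $L$ is itself a derivation (of degree $|A|+|B|+2$), so it suffices to check that it vanishes on the three classes of generators $V$, $W$, and $s(V\otimes W)$. On any $s(v\otimes w)$ both $\sigma_A$ and $\sigma_B$ vanish, so the commutator is trivially zero. On $v' \in V$, both $\sigma_A(v')$ and $\sigma_B(v')$ vanish by the observation above. On $w \in W$, part (i) gives $\sigma_A\sigma_B(w)=0$, and the same argument with the roles of $A$ and $B$ swapped gives $\sigma_B\sigma_A(w)=0$. Thus $[\sigma_A,\sigma_B]=0$ on generators, which upgrades to the claimed identity $\sigma_A\sigma_B(X)=(-1)^{(|A|+1)(|B|+1)}\sigma_B\sigma_A(X)$ on all of $L$.

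I do not anticipate any real obstacle: both parts reduce to the single observation that $\sigma_v$ vanishes on $\mathbb{L}(V)$. The only point requiring a little care is the structural description of $\sigma_w(B)$ as a sum of monomials with exactly one $s(V\otimes W)$-leaf, which could be made fully rigorous by a short induction on the bracket length of $B$ (or, pictorially, by the grafting recursion from Remark~\ref{The tree notation}).
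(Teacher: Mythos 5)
Your proposal is correct and follows essentially the same route as the paper: part (i) reduces to $\sigma_A\sigma_w(B)$ and is handled by tracking that $\sigma_A$ kills every leaf of $\sigma_w(B)$ (the paper formalizes exactly this as the induction on bracket length that you mention), and part (ii) is verified on generators exactly as in the paper. No gaps.
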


\begin{proof}
First note that $\sigma_A \sigma_B (w)=(-1)^{|w||B|}\sigma_A \sigma _w (B)$. 

If $B=v\in V$, we have $\sigma_A \sigma_w (v)=(-1)^{|v||w|}\sigma_A(s(v\otimes w))=0$.

Suppose that $\sigma_A \sigma_w (B)=0$ for any $B$ of bracket-length less or equal to $n$, that is, $B\in \mathbb{L}^{\leq n}(V)$. Let us check that $\sigma_A \sigma_w (B)=0$ holds also for $B\in \mathbb{L}^{n+1}(V)$.

If $B\in \mathbb{L}^{n+1}(V)$, $B=[T, T']$ where $T, T'\in \mathbb{L}^{\leq n}(V)$. Then
\begin{align*}
\sigma_A  \sigma_w (B)&=\sigma_A \sigma_w [T, T']\\
&=\sigma_A \Bigl( [\sigma_w (T), T']\pm [T, \sigma_w(T')]\Bigr)\\
&=[\sigma_A \sigma_w(T), T']\pm [T, \sigma_A  \sigma_w (T')]=0,
\end{align*}
where we have used that $\sigma_A(T)=0$ since $A, T\in \mathbb{L}(V)$, and the inductive hypothesis. This concludes part (i).

To prove (ii) it is only necessary to check that the derivation $[\sigma_A, \sigma_B ]\in \text{Der}(L)$ vanish on generators of $L=\mathbb{L}(V\oplus W\oplus s(V\otimes W))$.

Since $\sigma_v (v')=\sigma_v (s(v'\otimes w))=0$ for any $v , v'\in V$ and $w\in W$ we have that $[\sigma_A, \sigma_B]$ vanish on $V\oplus s(V\otimes W)$. 
If $w\in W$ by (i) we have
$[\sigma_A , \sigma_B](w)=\sigma_A  \sigma_B (w) -(-1)^{(|A|+1)(|B|+1)}\sigma_B \sigma_A (w)=0$ completing the proof.
\end{proof}

\begin{lemma}\label{secondformulalemma}
For $A, B\in \mathbb{L}(V)$ we have $\Bigl[ [ D, \sigma_A ],\sigma_B \Bigr] = (-1)^{|A|+1} \sigma_{[A, B]}$ on $\mathbb{L}(W_0)$. 

In particular, if $A, B\in \mathbb{L}(V_0), T\in \mathbb{L}(W_0)$, we have
\begin{equation}\label{secondformula}
D\Bigl( \sigma_A\sigma_B (T)\Bigr) = (-1)^{|A|+1} \sigma_{[A, B]} (T)+(-1)^{|A|+|B||T|}[\sigma_A (T), B]+[A, \sigma_B(T)].
\end{equation}
\end{lemma}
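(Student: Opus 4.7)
The plan is to establish the identity
\[
[[D, \sigma_A], \sigma_B] = (-1)^{|A|+1}\sigma_{[A, B]} \quad \text{on } \mathbb{L}(W_0),
\]
and then deduce the explicit formula for $D(\sigma_A\sigma_B(T))$ as a corollary. For the first assertion, both sides are graded derivations of the same degree $|A|+|B|+1$ (defined on $L_1$ and $L$ respectively), hence agree throughout the sub-Lie-algebra $\mathbb{L}(W_0)$ as soon as they agree on its generators $w\in W_0$; the task thus reduces to a computation at a single $w\in W_0$.

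Expanding the nested commutator (and using $Dw=0$ to drop the term $\sigma_B\sigma_A D(w)$), I would obtain
\[
[[D, \sigma_A], \sigma_B](w) = D\sigma_A\sigma_B(w) - (-1)^{|A|+1}\sigma_A D\sigma_B(w) - (-1)^{|A|(|B|+1)}\sigma_B D\sigma_A(w).
\]
The first summand vanishes by Lemma \ref{composition2derivations}(i). For the other two, Remark \ref{iii and iv}(iii) gives $D\sigma_B(w)=[B,w]-\sigma_{\partial B}(w)$; applying the derivation $\sigma_A$ and using that $\sigma_A$ vanishes on $\mathbb{L}(V)$ (so $\sigma_A(B)=0$) together with Lemma \ref{composition2derivations}(i) (so $\sigma_A\sigma_{\partial B}(w)=0$), one obtains $\sigma_A D\sigma_B(w)=(-1)^{(|A|+1)|B|}[B,\sigma_A(w)]$, and symmetrically for the other term. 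Collecting signs yields
\[
[[D, \sigma_A], \sigma_B](w) = -(-1)^{(|A|+1)(|B|+1)}[B, \sigma_A(w)] - [A, \sigma_B(w)].
\]
On the other side, starting from $\sigma_{[A,B]}(w) = (-1)^{|w|(|A|+|B|)}\sigma_w([A,B])$, expanding $\sigma_w$ by the Leibniz rule, and converting $\sigma_w(A),\sigma_w(B)$ back to $\sigma_A(w),\sigma_B(w)$ via the defining relation of Definition \ref{derivations} produces $\sigma_{[A,B]}(w) = (-1)^{|w||B|}[\sigma_A(w),B] + (-1)^{|A|}[A,\sigma_B(w)]$; multiplying by $(-1)^{|A|+1}$ and using graded antisymmetry on $[B,\sigma_A(w)]$ then matches the two expressions.

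For the ``in particular'' statement, I would use that $\partial A=\partial B=0$ for $A,B\in\mathbb{L}(V_0)$. Applying $D\sigma_A = [D,\sigma_A]+(-1)^{|A|+1}\sigma_A D$ to $\sigma_B(T)$, together with the rearrangement
\[
[D, \sigma_A]\sigma_B = [[D, \sigma_A], \sigma_B] + (-1)^{|A|(|B|+1)}\sigma_B[D, \sigma_A],
\]
the identity just proved turns $[[D, \sigma_A], \sigma_B](T)$ into $(-1)^{|A|+1}\sigma_{[A,B]}(T)$, while Lemma \ref{firstLemma}(i) simplifies $[D, \sigma_A](T)$ to $[A,T]$ and Remark \ref{iii and iv}(iii) simplifies $D\sigma_B(T)$ to $[B,T]$. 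Applying the derivations $\sigma_A,\sigma_B$ to these brackets (using $\sigma_A(B)=\sigma_B(A)=0$), and then applying graded antisymmetry to $[B,\sigma_A(T)]$---recalling that $|\sigma_A(T)|=|A|+|T|+1$---produces the stated terms $[A,\sigma_B(T)]$ and $(-1)^{|A|+|B||T|}[\sigma_A(T),B]$.

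The main obstacle throughout is sign bookkeeping: the various commutator expansions, Leibniz applications, and conversions between $\sigma_X(Y)$ and $\sigma_Y(X)$ each introduce parity factors depending on $|A|,|B|,|T|,|w|$, and one must verify that these conspire exactly to reproduce the signs in the stated formulas.
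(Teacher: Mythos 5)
Your proposal is correct and follows essentially the same route as the paper's proof: reduce to generators $w\in W_0$ via the derivation property, expand the iterated commutator, kill the terms $D\sigma_A\sigma_B(w)$ and $\sigma_A\sigma_{\partial B}(w)$ using Lemma \ref{composition2derivations}(i), identify the survivors with $(-1)^{|A|+1}\sigma_{[A,B]}(w)$ via the Leibniz expansion of $\sigma_w([A,B])$, and then specialize to $\partial A=\partial B=0$ for the second claim. The signs you record all check out against the paper's computation.
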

\begin{proof}
Since $\Bigl[ [ D, \sigma_A ],\sigma_B \Bigr], (-1)^{|A|+1}\sigma_{[A, B]}\in \text{Der}(L)$, it is enough to check that both derivations agree on $W_0$. Let $w\in W_0$, then on the one hand
\begin{align*}
\sigma_{[A, B]}(w)&=(-1)^{|w|(|A|+|B|)}\sigma_w ([A, B])\\
&=(-1)^{|w|(|A|+|B|)}[\sigma_w (A), B]+(-1)^{|w||B|+|A|}[A, \sigma_w (B)]\\
&=(-1)^{|w||B|}[\sigma_A (w), B]+(-1)^{|A|}[A, \sigma_B (w)].
\end{align*}
On the other hand,
\begin{align*}\Bigl[ [ D, \sigma_A ],\sigma_B \Bigr] &= [D  \sigma_A -(-1)^{|A|+1}\sigma_A  D, \sigma_B ] \\
&= D \sigma_A \sigma_B + (-1)^{|A|}\sigma_A  D \sigma_B\\
&-(-1)^{(|B|+1)|A|}\sigma_B D \sigma_A + (-1)^{|B|+|A||B|+1}\sigma_B \sigma_A D,
\end{align*}
and by (i) of Lemma \ref{composition2derivations}, (iv) of Remark \ref{iii and iv}, we have 
\begin{align*}
\Bigl[ [ D, \sigma_A ],\sigma_B \Bigr](w)&=(-1)^{|A|}\sigma_A D\sigma_B (w) -(-1)^{(|B|+1)|A|}\sigma_BD\sigma_A (w)\\
&=(-1)^{|A|}\sigma_A \Bigl( [B, w]-\sigma_{\partial B}(w)\Bigr) \\ &- (-1)^{(|B|+1)|A|}\sigma_B \Bigl( [A, w] -\sigma_{\partial A }(w)\Bigl)\\
&=(-1)^{|A|}\sigma_A [B, w]-(-1)^{|A|}\sigma_A\sigma_{\partial B}(w)\\
&-(-1)^{(|B|+1)|A|}\sigma_B[A, w]+(-1)^{(|B|+1)|A|}\sigma_B \sigma_{\partial A} (w)\\
&=-(-1)^{|A|+|B||w|}[\sigma_A (w), B] - [A, \sigma_B (w)],
\end{align*}
concluding the first part of the Lemma.

For the second part, as $A, B\in \mathbb{L}(V_0)$ we have that $\partial A =\partial B=0$. Then,
\begin{align*}
\Bigl[ [ D, \sigma_A ],\sigma_B \Bigr](T)&= D\sigma_A\sigma_B (T)\\
&+(-1)^{|A|}\sigma_A D\sigma_B (T) -(-1)^{(|B|+1)|A|}\sigma_BD\sigma_A (T)\\
&=D\sigma_A\sigma_B (T)+(-1)^{|A|}\sigma_A [B, T]- (-1)^{(|B|+1)|A|}\sigma_B [A, T]\\
&=D\sigma_A\sigma_B (T)-(-1)^{|A|+|B||T|}[\sigma_A (T), B] - [A, \sigma_B (T)].
\end{align*}
Since $\Bigl[ [ D, \sigma_A ],\sigma_B \Bigr](T)=(-1)^{|A|+1}\sigma_{[A, B]}(T)$ for $T\in \mathbb{L}(W_0)$, we obtain formula (\ref{secondformula}).
\end{proof}

\begin{remark}
Although we have that $\sigma_A\sigma_B (w)=0$ for any $w\in W$ by Lemma \ref{composition2derivations} (i)  this does not imply that $\sigma_A\sigma_B (T)=0$ for $T\in \mathbb{L}(W)$ as $\sigma_A\sigma_B$ is not a derivation. The most we can say is $\sigma_A \sigma_B(T)=(-1)^{(|A|+1)(|B|+1)}\sigma_B \sigma_A (T)$ as in Lemma \ref{composition2derivations} (ii). The same applies for the terms $\sigma_A \sigma_{\partial B} (T)$ and $\sigma_B \sigma_{\partial A} (T)$ that does not appear in the computations of the second part of the proof of formula ($\ref{secondformula}$) because $\partial A = \partial B = 0$ when $A, B\in \mathbb{L}(V_0)$. 
\end{remark}

\begin{remark}
Formula $(\ref{secondformula})$ can be written in tree notation as:

 $$
 \xymatrixcolsep{.5pc}
\xymatrixrowsep{.5pc}
\entrymodifiers={=<1pc>} \xymatrix{
&\hskip .3cm\text{ \Large $A$ }\ar@{-}[ddr(0.8)]&&\text{ \Large $B$ }\ar@{-}[ddl(0.8)]&&\hskip.4cm\text{ \Large $A$ }\ar@{-}[dr(1)]&\hskip 1cm\text{ \Large $B$ }&{}\ar@{-}[dl(1)]&&\hskip .4cm\text{ \Large $A$ }&&&&&&\text{ \Large $B$ }\\
\text{\huge $D$}\Bigl(&&&&\hskip .2cm\Bigr)=\ \pm&&{}&&\pm&&&&+&&&\\
&&\aoverbrace[]{\text{ \huge $T$ }}&&&&\aoverbrace[]{\text{ \huge $T$ }}\ar@{-}[u(1)]&&&\hskip .4cm\text{ \Large $T$ }\ar@{-}[dr(1)]\ar@{-}@<-1ex>[uu(.8)]&&\text{ \Large $B$ }\ar@{-}[dl(1)]&&\hskip.1cm\text{ \Large $A$ }\ar@{-}[dr(1)]&&\text{ \Large $T.$}\ar@{-}[dl(1)]\ar@{-}[uu(.8)]\\
&&&&&&&&&&&&&&&}$$ 
The symbol $\begin{matrix}\stackrel{\text{\footnotesize{$A$}}\ \ \ \text{\footnotesize{$B$}} }{\RS{L}\ \ \RS{R}}  \\ \text{\Large{$T$}} \end{matrix}$ represents $\sigma_A\sigma_B (T)$. The fact that the derivations $\sigma_A$ and $\sigma_B$ are grafted on the leaves of $T$ at the same ``level'' is due to the fact that by Lemma \ref{composition2derivations} (ii) we have $\sigma_A\sigma_B (T)=\pm \sigma_B\sigma_A (T)$.

We can derive the above formula directly in tree notation using antisymmetry, Jacobi identity and formula $(\ref{iii tree})$ for $S\in \mathbb{L}(V_0)$,  i.e.
\begin{equation*}
 \xymatrixcolsep{.5pc}
\xymatrixrowsep{.5pc}
\entrymodifiers={=<1pc>} \xymatrix{
&S\ar@{-}[dd]&&{S}\ar@{-}[rd]&&\ar@{-}[ld]{T}  \\
D\Bigl(&\hskip1cm\Bigr)&\hskip.5cm=&&*{}\ar@{}[d]&.   \\
&T&&&&
}
\end{equation*}
We will proceed by an inductive argument on the number of leaves of the tree $T$. 

If $T = \xymatrixcolsep{.5pc}
\xymatrixrowsep{-.2pc}
\entrymodifiers={=<1pc>} \xymatrix{\hskip .7cm x\hskip .5cm y&\\ \hskip .6cm\RS{{LLL}{RRR}}&} $ with $x, y \in W_0$,

$$\xymatrixcolsep{.03pc}
\xymatrixrowsep{.03pc}
\entrymodifiers={=<1pc>} \xymatrix{&A\ar@{-}[rdd(1)]&&&&B\ar@{-}[ldd(1)]&&&&A\ar@{-}[dd(1)]&&B\ar@{-}[dd(1)]&&&&B\ar@{-}[dd(1)]&&A\ar@{-}[dd(1)]&\\
&&&&&&&&&&&&&&&&&&\\
D\Bigl(&&&\stackrel{}{\overbrace{\hskip 1cm}}&&&\Bigr)&=&D\Bigl(&*{}\ar@{-}[d]&&*{}\ar@{-}[d]&\hskip -.2cm\Bigr)&\hskip -.2cm\pm&D\Bigl(&*{}\ar@{-}[d]&&*{}\ar@{-}[d]&\hskip -.2cm\Bigr)\\
&&x\ar@{-}[rd(1)]&&y\ar@{-}[ld(1)]&&&&&x\ar@{-}[rd(1)]&&y\ar@{-}[ld(1)]&&&&x\ar@{-}[rd(1)]&&y\ar@{-}[ld(1)]&\\
&&&&&&&&&&&&&&&&&&
}$$
\vskip .2cm
$$\xymatrixcolsep{.03pc}
\xymatrixrowsep{.03pc}
\entrymodifiers={=<1pc>} \xymatrix{ &&&&&B\ar@{-}[dd]&&A\ar@{-}[dd]&&&&&&&&&&A\ar@{-}[dd]&&B\ar@{-}[dd]&&&& \\
(\ref{iii tree})&&&&&&&&&&&&&&&&&&&&&&& \\
=&A\ar@{-}[ddrr(1)]&&x\ar@{-}[dl(1)]&&y\ar@{-}[ddll(1)]&\pm&x\ar@{-}[ddrr(1)]&&B\ar@{-}[dr(1)]&&y\ar@{-}[ddll(1)]&\pm&B\ar@{-}[ddrr(1)]&&x\ar@{-}[dl(1)]&&y\ar@{-}[ddll(1)]&\pm&x\ar@{-}[ddrr(1)]&&A\ar@{-}[dr(1)]&&y\ar@{-}[ddll(1)] \\
&&&&&&&&&&&&&&&&&&&&&&& \\
&&&&&&&&&&&&&&&&&&&&&&& 
}$$
\vskip .2cm
$$\xymatrixcolsep{.03pc}
\xymatrixrowsep{.03pc}
\entrymodifiers={=<1pc>} \xymatrix{ &&&&&B\ar@{-}[dd]&&&&B\ar@{-}[dd]&&&&A\ar@{-}[dd]&&&&&&&&A\ar@{-}[dd]&& \\
(\text{Jacobi})&&&&&&&&&&&&&&&&&&&&&&& \\
=&A\ar@{-}[ddrr(1)]&&x\ar@{-}[dr(1)]&&y\ar@{-}[ddll(1)]&\pm&A\ar@{-}[ddrr(1)]&&y\ar@{-}[dl(1)]&&x\ar@{-}[ddll(1)]&\pm&x\ar@{-}[ddrr(1)]&&B\ar@{-}[dl(1)]&&y\ar@{-}[ddll(1)]&\pm&B\ar@{-}[ddrr(1)]&&x\ar@{-}[dr(1)]&&y\ar@{-}[ddll(1)] \\
&&&&&&&&&&&&&&&&&&&&&&& \\
&\text{\footnotesize $(a)$}&&&&&&\text{\footnotesize $(b)$}&&&&&&\text{\footnotesize $(c)$}&&&&&&\text{\footnotesize $(d)$}&&&& 
}$$
\vskip .2cm
$$\xymatrixcolsep{.03pc}
\xymatrixrowsep{.03pc}
\entrymodifiers={=<1pc>} \xymatrix{ &&&&&A\ar@{-}[dd]&&&&A\ar@{-}[dd]&&&&B\ar@{-}[dd]&&&&&&&&B\ar@{-}[dd]&& \\
&&&&&&&&&&&&&&&&&&&&&&& \\
\pm&B\ar@{-}[ddrr(1)]&&x\ar@{-}[dr(1)]&&y\ar@{-}[ddll(1)]&\pm&B\ar@{-}[ddrr(1)]&&y\ar@{-}[dl(1)]&&x\ar@{-}[ddll(1)]&\pm&x\ar@{-}[ddrr(1)]&&A\ar@{-}[dl(1)]&&y\ar@{-}[ddll(1)]&\pm&A\ar@{-}[ddrr(1)]&&x\ar@{-}[dr(1)]&&y\ar@{-}[ddll(1)] \\
&&&&&&&&&&&&&&&&&&&&&&& \\
&\text{\footnotesize $(d)$}&&&&&&\text{\footnotesize $(b)$}&&&&&&\text{\footnotesize $(c)$}&&&&&&\text{\footnotesize $(a)$}&&&& 
}$$
\vskip .2cm
$$\xymatrixcolsep{.03pc}
\xymatrixrowsep{.03pc}
\entrymodifiers={=<1pc>} \xymatrix{ &&A\ar@{-}[dr(1)]&&B\ar@{-}[dl(1)]&&&A\ar@{-}[dr(1)]&&B\ar@{-}[dl(1)]&&&&&&&A\ar@{-}[d(1)]&&&&&&B\ar@{-}[d(1)]& \\
&&&*{}\ar@{-}[d]&&&&&*{}\ar@{-}[d]&&&&&&&&\stackrel{}{\overbrace{\hskip 1cm}}&&&&&&\stackrel{}{\overbrace{\hskip 1cm}}& \\
=&\pm&&y\ar@{-}[dr(1)]&&x\ar@{-}[dl(1)]&\pm&&x\ar@{-}[dr(1)]&&y\ar@{-}[dl(1)]&&\pm&B\ar@{-}[ddrr(1)]&&x\ar@{-}[dr(1)]&&y\ar@{-}[ddll(1)]&\pm&A\ar@{-}[ddrr(1)]&&x\ar@{-}[dr(1)]&&y\ar@{-}[ddll(1)] \\
&&&&&&&&&&&&&&&&&&&&&&& \\
&\text{\footnotesize $(b)$}&&&&&&\text{\footnotesize $(c)$}&&&&&&\text{\footnotesize $(d)$}&&&&&&\text{\footnotesize $(a)$}&&&& 
}$$
\vskip.2cm
$$\xymatrixcolsep{.03pc}
\xymatrixrowsep{.03pc}
\entrymodifiers={=<1pc>} \xymatrix{ &&&&&A\ar@{-}[dr(1)]&&B\ar@{-}[dl(1)]&&&&&A\ar@{-}[d(1)]&&&&&&&&&&B\ar@{-}[d(1)]& \\
&&&&&&*{}\ar@{-}[d]&&&&&&\stackrel{}{\overbrace{\hskip 1cm}}&&&&&&&&&&\stackrel{}{\overbrace{\hskip 1cm}}& \\
=&&&\pm&&&\stackrel{}{\overbrace{\hskip 1cm}}&&&\pm&&x\ar@{-}[ddrr(1)]&&y\ar@{-}[dl(1)]&&B\ar@{-}[ddll(1)]&&+&&A\ar@{-}[ddrr(1)]&&x\ar@{-}[dr(1)]&&y\ar@{-}[ddll(1)] \\
&&&&&x\ar@{-}[dr(1)]&&y\ar@{-}[dl(1)]&&&&&&&&&&&&&&&& \\
&&&&&&&&&&&&&&&&&&&&&&& 
}$$

Next, if $T = \xymatrixcolsep{.5pc}
\xymatrixrowsep{-.2pc}
\entrymodifiers={=<1pc>} \xymatrix{\hskip .7cm X\hskip .5cm y&\\ \hskip .6cm\RS{{LLL}{RRR}}&} $ with $X\in \mathbb{L}^{\geq 2}(W_0)$ and $y \in W_0$, by the one hand
$$\xymatrixcolsep{.03pc}
\xymatrixrowsep{.03pc}
\entrymodifiers={=<1pc>} \xymatrix{&A\ar@{-}[rdd(1)]&&&&B\ar@{-}[ldd(1)]&&&&A\ar@{-}[rddd(0.8)]&&B\ar@{-}[lddd(0.8)]&&&&&&A\ar@{-}[dd(1)]&&B\ar@{-}[dd(1)]&&&&B\ar@{-}[dd(1)]&&A\ar@{-}[dd(1)]&\\
&&&&&&&&&&&&&&&&&&&&&&&&&&\\
D\Bigl(&&&\stackrel{}{\overbrace{\hskip 1cm}}&&&\Bigr)&=&D\Bigl(&&&&&&\hskip -.2cm\Bigr)&\hskip -.2cm\pm&D\Bigl(&*{}\ar@{-}[d]&&*{}\ar@{-}[d]&\hskip -.2cm\Bigr)&\hskip -.2cm\pm&D\Bigl(&*{}\ar@{-}[d]&&*{}\ar@{-}[d]&\hskip -.2cm\Bigr)\\
&&X\ar@{-}[rd(1)]&&y\ar@{-}[ld(1)]&&&&&&X\ar@{-}[rd(1)]&&y\ar@{-}[ld(1)]&&&&&X\ar@{-}[rd(1)]&&y\ar@{-}[ld(1)]&&&&X\ar@{-}[rd(1)]&&y\ar@{-}[ld(1)]&\\
&&&&&&&&&&&&&&&&&&&&&&&&&&
}$$
$$\xymatrixcolsep{.03pc}
\xymatrixrowsep{.03pc}
\entrymodifiers={=<1pc>} \xymatrix{ \text{(IH)}&&&&A\ar@{-}[dr(1)]&&B\ar@{-}[dl(1)]&&&&&A\ar@{-}[dd(0.7)]&&&&&&&&&&B\ar@{-}[dd(0.7)]&& \\
(\ref{iii tree})&&&&&*{}\ar@{-}[dd]&&&&&&&&&&&&&&&&&& \\
=&&&\pm&&&&&&\pm&&X\ar@{-}[ddrr(1)]&&B\ar@{-}[dl(1)]&&y\ar@{-}[ddll(1)]&&+&&A\ar@{-}[ddrr(1)]&&X\ar@{-}[dl(1)]&&y\ar@{-}[ddll(1)] \\
&&&&&X\ar@{-}[dr(1)]&&y\ar@{-}[dl(1)]&&&&&&&&&&&&&&&& \\
&&&&\text{\footnotesize $(a)$}&&&&&&&\text{\footnotesize $(b)$}&&&&&&&&\text{\footnotesize $(c)$}&&&& 
}$$
\vskip .05cm
$$\xymatrixcolsep{.03pc}
\xymatrixrowsep{.03pc}
\entrymodifiers={=<1pc>} \xymatrix{ &&&&&B\ar@{-}[dd]&&A\ar@{-}[dd]&&&&&&&&&&A\ar@{-}[dd]&&B\ar@{-}[dd]&&&& \\
&&&&&&&&&&&&&&&&&&&&&&& \\
\pm&A\ar@{-}[ddrr(1)]&&X\ar@{-}[dl(1)]&&y\ar@{-}[ddll(1)]&\pm&X\ar@{-}[ddrr(1)]&&B\ar@{-}[dr(1)]&&y\ar@{-}[ddll(1)]&\pm&B\ar@{-}[ddrr(1)]&&X\ar@{-}[dl(1)]&&y\ar@{-}[ddll(1)]&\pm&X\ar@{-}[ddrr(1)]&&A\ar@{-}[dr(1)]&&y,\ar@{-}[ddll(1)]\\
&&&&&&&&&&&&&&&&&&&&&&& \\
&\text{\footnotesize $(d)$}&&&&&&\text{\footnotesize $(b)$}&&&&&&\text{\footnotesize $(e)$}&&&&&&\text{\footnotesize $(c)$}&&&& 
}$$
\vskip .2cm
and using Jacobi identity (and antisymmetry) we have:
$$\xymatrixcolsep{.03pc}
\xymatrixrowsep{.03pc}
\entrymodifiers={=<1pc>} \xymatrix{\text{\hskip -.6cm\large (b) } &A\ar@{-}[dd]&&&&&&A\ar@{-}[dd]&&&&&&&A\ar@{-}[dd]&&&&&&&&&& \\
&&&&&&&&&&&&&&&&&&&&&&&& \\
\pm&X\ar@{-}[ddrr(1)]&&B\ar@{-}[dl(1)]&&y\ar@{-}[ddll(1)]&\pm&X\ar@{-}[ddrr(1)]&&B\ar@{-}[dr(1)]&&y\ar@{-}[ddll(1)]&=&\pm&X\ar@{-}[ddrr(1)]&&y\ar@{-}[dl(1)]&&B\ar@{-}[ddll(1)]&&&&&&\\
&&&&&&&&&&&&&&&&&&&&&&&& \\
&&&&&&&&&&&&&&&&&&&&&&&& 
}$$
$$\xymatrixcolsep{.03pc}
\xymatrixrowsep{.03pc}
\entrymodifiers={=<1pc>} \xymatrix{ \text{\hskip -.6cm\large (c) } &&&B\ar@{-}[dd]&&&&B\ar@{-}[dd]&&&&&&&B\ar@{-}[dd]&&&&&&&&&& \\
&&&&&&&&&&&&&&&&&&&&&&&& \\
\pm&A\ar@{-}[ddrr(1)]&&X\ar@{-}[dl(1)]&&y\ar@{-}[ddll(1)]&\pm&X\ar@{-}[ddrr(1)]&&A\ar@{-}[dr(1)]&&y\ar@{-}[ddll(1)]&=&\pm&X\ar@{-}[ddrr(1)]&&y\ar@{-}[dl(1)]&&A\ar@{-}[ddll(1)]&&&&&& \\
&&&&&&&&&&&&&&&&&&&&&&&& \\
&&&&&&&&&&&&&&&&&&&&&&&& 
}$$
$$\xymatrixcolsep{.03pc}
\xymatrixrowsep{.03pc}
\entrymodifiers={=<1pc>} \xymatrix{ \text{\hskip -.6cm\large (d) } &&&&&B\ar@{-}[dd]&&&&&&&B\ar@{-}[dd]&&&&B\ar@{-}[dd]&&&&&&&& \\
&&&&&&&&&&&&&&&&&&&&&&&& \\
\pm&A\ar@{-}[ddrr(1)]&&X\ar@{-}[dl(1)]&&y\ar@{-}[ddll(1)]&=&\pm& A\ar@{-}[ddrr(1)]&&X\ar@{-}[dr(1)]&&y\ar@{-}[ddll(1)]&\pm&A\ar@{-}[ddrr(1)]&&y\ar@{-}[dl(1)]&&X\ar@{-}[ddll(1)]&&&&&& \\
&&&&&&&&&&&&&&&&&&&&&&&& \\
&&&&&&&&\text{\footnotesize $(d_1)$}&&&&&&\text{\footnotesize $(d_2)$}&&&&&&&&&& 
}$$
$$\xymatrixcolsep{.03pc}
\xymatrixrowsep{.03pc}
\entrymodifiers={=<1pc>} \xymatrix{ \text{\hskip -.6cm\large (e) } &&&&&A\ar@{-}[dd]&&&&&&&A\ar@{-}[dd]&&&&A\ar@{-}[dd]&&&&&&&& \\
&&&&&&&&&&&&&&&&&&&&&&&& \\
\pm&B\ar@{-}[ddrr(1)]&&X\ar@{-}[dl(1)]&&y\ar@{-}[ddll(1)]&=&\pm& B\ar@{-}[ddrr(1)]&&X\ar@{-}[dr(1)]&&y\ar@{-}[ddll(1)]&\pm&B\ar@{-}[ddrr(1)]&&y\ar@{-}[dl(1)]&&X.\ar@{-}[ddll(1)]&&&&&& \\
&&&&&&&&&&&&&&&&&&&&&&&& \\
&&&&&&&&\text{\footnotesize $(e_1)$}&&&&&&\text{\footnotesize $(e_2)$}&&&&&&&&&& 
}$$
By the other hand
$$\xymatrixcolsep{.03pc}
\xymatrixrowsep{.03pc}
\entrymodifiers={=<1pc>} \xymatrix{ A\ar@{-}[dr(1)]&&B\ar@{-}[dl(1)]&&&A\ar@{-}[dr(1)]&&B\ar@{-}[dl(1)]&&&&&&A\ar@{-}[dr(1)]&&B\ar@{-}[dl(1)]&&&&&&& \\
&*{}\ar@{-}[d]&&&&&*{}\ar@{-}[d]&&&&&&&&*{}\ar@{-}[d]&&&&&&&& \\
\hskip -.6cm\pm&\stackrel{}{\overbrace{\hskip 1cm}}&&\hskip .3cm=&&\hskip -.2cm\pm&*{}\ar@{-}[d(.4)]&&&&\hskip -.2cm\pm&&&&*{}\ar@{-}[d(.4)]&&&&&&&& \\
X\ar@{-}[dr(1)]&&y\ar@{-}[dl(1)]&&&&X\ar@{-}[dr(1)]&&y\ar@{-}[dl(1)]&&&&X\ar@{-}[dr(1)]&&y\ar@{-}[dl(1)]&&&& \\
&&&&&&&&&&&&&&&&&&&&&& 
}$$
$$\xymatrixcolsep{.03pc}
\xymatrixrowsep{.03pc}
\entrymodifiers={=<1pc>} \xymatrix{ &&&&&A\ar@{-}[dr(1)]&&B\ar@{-}[dl(1)]&&&&&&&y\ar@{-}[d(1)]&&&&&&&& \\
&&&&&&*{}\ar@{-}[d]&&&&&&&&\stackrel{}{\overbrace{\hskip 1cm}}&&&&&&&& \\
&&&\hskip .3cm=&&\hskip -.2cm\pm&*{}\ar@{-}[d(.4)]&&&&\hskip -.4cm\pm&X\ar@{-}[ddrr(1)]&&A\ar@{-}[dr(1)]&&B\ar@{-}[ddll(1)]&&&&&&& \\
&&&&&&X\ar@{-}[dr(1)]&&y\ar@{-}[dl(1)]&&&&&&&&&&&&&& \\
&&&&&&&&&&&&&&&&&&&&&& 
}$$
$$\xymatrixcolsep{.03pc}
\xymatrixrowsep{.03pc}
\entrymodifiers={=<1pc>} \xymatrix{ &&&&&A\ar@{-}[dr(1)]&&B\ar@{-}[dl(1)]&&&&&&y\ar@{-}[dd(.7)]&&&&&&&&&y\ar@{-}[dd(.7)] \\
&&&&&&*{}\ar@{-}[d]&&&&&&&&&&&&&&&& \\
&&&\hskip .3cm=&&\hskip -.2cm\pm&*{}\ar@{-}[d(.4)]&&&&\hskip -.4cm\pm&X\ar@{-}[ddrr(1)]&&A\ar@{-}[dr(1)]&&B\ar@{-}[ddll(1)]&\hskip .5cm\pm&&X\ar@{-}[ddrr(1)]&&A\ar@{-}[dr(1)]&&B\ar@{-}[ddll(1)] \\
&&&&&&X\ar@{-}[dr(1)]&&y\ar@{-}[dl(1)]&&&&&&&&&&&&&& \\
&&&&&&&&&&&&&&&&&&&&&&
}$$
$$\xymatrixcolsep{.03pc}
\xymatrixrowsep{.03pc}
\entrymodifiers={=<1pc>} \xymatrix{ &&&&&A\ar@{-}[dr(1)]&&B\ar@{-}[dl(1)]&&&&&&A\ar@{-}[dd(.7)]&&&&&&&&&B\ar@{-}[dd(.7)] \\
&&&&&&*{}\ar@{-}[d]&&&&&&&&&&&&&&&& \\
&&&\hskip .3cm=&&\hskip -.2cm\pm&*{}\ar@{-}[d(.4)]&&&&\hskip -.4cm\pm&X\ar@{-}[ddrr(1)]&&y\ar@{-}[dr(1)]&&B\ar@{-}[ddll(1)]&\hskip .5cm\pm&&X\ar@{-}[ddrr(1)]&&A\ar@{-}[dr(1)]&&y\ar@{-}[ddll(1)] \\
&&&&&&X\ar@{-}[dr(1)]&&y\ar@{-}[dl(1)]&&&&&&&&&&&&&& \\
&&&&&\text{\footnotesize $(a)$}&&&&&&\text{\footnotesize $(e_2)$}&&&&&&&\text{\footnotesize $(d_2)$}&&&&
}$$
$$\xymatrixcolsep{.03pc}
\xymatrixrowsep{.03pc}
\entrymodifiers={=<1pc>} \xymatrix{ &A\ar@{-}[dd(.4)]&&&&&&&A\ar@{-}[dd(.7)]&&&&&&&&&A\ar@{-}[dd(.7)]&& &&&\\
&\stackrel{}{\overbrace{\hskip 1cm}}&&&&&&&&&&&&&&&&&&&&&&&&&&& \\
X\ar@{-}[ddrr(1)]&&y\ar@{-}[dl(1)]&&B\ar@{-}[ddll(1)]&&=&\pm&X\ar@{-}[ddrr(1)]&&y\ar@{-}[dl(1)]&&B\ar@{-}[ddll(1)]&\hskip .5cm\pm&&X\ar@{-}[ddrr(1)]&&y\ar@{-}[dl(1)]&&B\ar@{-}[ddll(1)] &&&&&&&&\\
&&&&&&&&&&&&&&&&&&&&&&&&&&&& \\
&&&&&&&&\text{\footnotesize $(b)$}&&&&&&&\text{\footnotesize $(e_1)$}&&&&&&&&&&&&&
}$$
$$\xymatrixcolsep{.03pc}
\xymatrixrowsep{.03pc}
\entrymodifiers={=<1pc>} \xymatrix{ &&&B\ar@{-}[dd(.4)]&&&&&&&B\ar@{-}[dd(.7)]&&&&&&&&&B\ar@{-}[dd(.7)]&&&\\
&&&\stackrel{}{\overbrace{\hskip 1cm}}&&&&&&&&&&&&&&&&&&&&&&&&& \\
A\ar@{-}[ddrr(1)]&&X\ar@{-}[dr(1)]&&y\ar@{-}[ddll(1)]&&=&\pm&A\ar@{-}[ddrr(1)]&&X\ar@{-}[dr(1)]&&y\ar@{-}[ddll(1)]&\hskip .5cm\pm&&A\ar@{-}[ddrr(1)]&&X\ar@{-}[dr(1)]&&y.\ar@{-}[ddll(1)] &&&&&&&&\\
&&&&&&&&&&&&&&&&&&&&&&&&&&&& \\
&&&&&&&&\text{\footnotesize $(c)$}&&&&&&&\text{\footnotesize $(d_1)$}&&&&&&&&&&&&&
}$$
Therefore, we conclude that
$$\xymatrixcolsep{.03pc}
\xymatrixrowsep{.03pc}
\entrymodifiers={=<1pc>} \xymatrix{
&&&&&&&&&&&&&&&A\ar@{-}[rd(1)]&&B\ar@{-}[ld(1)]&&&A\ar@{-}[dd(0.8)]&&&&&&&&B\ar@{-}[dd(0.8)]&\\
&A\ar@{-}[rdd(1)]&&&&B\ar@{-}[ldd(1)]&&&&&&&&&&&*{}\ar@{-}[d(.6)]&&&&&&&&&&&&&\\
&&&&&&&&&&&&
&&&&\stackrel{}{\overbrace{\hskip 1cm}}
&&&&\stackrel{}{\overbrace{\hskip 1cm}}&&&&&&&&\stackrel{}{\overbrace{\hskip 1cm}}&\\
D\Bigl(&&&\stackrel{}{\overbrace{\hskip 1cm}}&&&\Bigr)&\hskip -.2cm=&&&\text{\footnotesize (a)+(b)+(c)+(d)}&&&\hskip .2cm=&\pm&X\ar@{-}[dr(1)]&&y\ar@{-}[dl(1)]&\pm&X\ar@{-}[ddrr(1)]&&y\ar@{-}[dl(1)]&&B\ar@{-}[ddll(1)]&\pm&A\ar@{-}[ddrr(1)]&&X\ar@{-}[dr(1)]&&y.\ar@{-}[ddll(1)]\\
&&X\ar@{-}[rd(1)]&&y\ar@{-}[ld(1)]&&&&&&&&&&&&&&&&&&&&&&&&&\\
&&&&&&&&&&&&&&&&&&&&&&&&&&&&&
}$$

Finally, if $T = \xymatrixcolsep{.5pc}
\xymatrixrowsep{-.2pc}
\entrymodifiers={=<1pc>} \xymatrix{\hskip .7cm X\hskip .5cm Y&\\ \hskip .6cm\RS{{LLL}{RRR}}&} $ with $X, Y \in \mathbb{L}^{\geq 2}(W_0)$, a similar computation  gives:

$$\xymatrixcolsep{.03pc}
\xymatrixrowsep{.03pc}
\entrymodifiers={=<1pc>} \xymatrix{&A\ar@{-}[rdd(1)]&&&&B\ar@{-}[ddl(1)]&&&&&A\ar@{-}[dddr(0.8)]&&B\ar@{-}[dddl(0.8)]&&&&&&A\ar@{-}[dddr(0.8)]&&B\ar@{-}[dddl(0.8)]&\\
&&&&&&&&&&&&&&&&&&&&&\\
D\Bigl(&&&\stackrel{}{\overbrace{\hskip 1cm}}&&&\Bigr)&=&\pm&D\Bigl(&&&&&\hskip -.2cm\Bigr)&\hskip -.2cm\pm&D\Bigl(&&&&&\hskip -.4cm\Bigr)\\
&&X\ar@{-}[rd(1)]&&Y\ar@{-}[ld(1)]&&&&&&&X\ar@{-}[rd(1)]&&Y\ar@{-}[ld(1)]&&&&X\ar@{-}[rd(1)]&&Y\ar@{-}[ld(1)]&&\\
&&&&&&&&&&&&&&&&&&&&&
}$$
$$\xymatrixcolsep{.03pc}
\xymatrixrowsep{.03pc}
\entrymodifiers={=<1pc>} \xymatrix{&&&&&&&&&&A\ar@{-}[dd(1)]&&B\ar@{-}[dd(1)]&&&&B\ar@{-}[dd(1)]&&A\ar@{-}[dd(1)]&&\\
&&&&&&&&&&&&&&&&&&&&\\
&&&&&&&&\pm&D\Bigl(&*{}\ar@{-}[d]&&*{}\ar@{-}[d]&\hskip -.2cm\Bigr)&\hskip -.2cm\pm&D\Bigl(&*{}\ar@{-}[d]&&*{}\ar@{-}[d]&\hskip -.2cm\Bigr)&\\
&&&&&&&&&&X\ar@{-}[rd(1)]&&Y\ar@{-}[ld(1)]&&&&X\ar@{-}[rd(1)]&&Y\ar@{-}[ld(1)]&&\\
&&&&&&&&&&&&&&&&&&&&
}$$
$$\xymatrixcolsep{.03pc}
\xymatrixrowsep{.03pc}
\entrymodifiers={=<1pc>} \xymatrix{ \text{(IH)}&&&&A\ar@{-}[dr(1)]&&B\ar@{-}[dl(1)]&&&&&A\ar@{-}[dd(0.7)]&&&&&&&&&&B\ar@{-}[dd(0.7)]&& \\
(\ref{iii tree})&&&&&*{}\ar@{-}[dd]&&&&&&&&&&&&&&&&&& \\
=&&&\pm&&&&&&\pm&&X\ar@{-}[ddrr(1)]&&B\ar@{-}[dl(1)]&&Y\ar@{-}[ddll(1)]&&+&&A\ar@{-}[ddrr(1)]&&X\ar@{-}[dl(1)]&&Y\ar@{-}[ddll(1)] \\
&&&&&X\ar@{-}[dr(1)]&&Y\ar@{-}[dl(1)]&&&&&&&&&&&&&&&& \\
&&&&\text{\footnotesize $(a)$}&&&&&&&\text{\footnotesize $(b)$}&&&&&&&&\text{\footnotesize $(c)$}&&&& 
}$$
$$\xymatrixcolsep{.03pc}
\xymatrixrowsep{.03pc}
\entrymodifiers={=<1pc>} \xymatrix{ &&&&&&A\ar@{-}[dr(1)]&&B\ar@{-}[dl(1)]&&&&&A\ar@{-}[dd(0.7)]&&&&&&&&&&B\ar@{-}[dd(0.7)] \\
&&&&&&&*{}\ar@{-}[dd]&&&&&&&&&&&&&&&& \\
&&&\pm&&&&&&\pm&&X\ar@{-}[ddrr(1)]&&Y\ar@{-}[dr(1)]&&B\ar@{-}[ddll(1)]&&+&&X\ar@{-}[ddrr(1)]&&A\ar@{-}[dr(1)]&&Y\ar@{-}[ddll(1)] \\
&&&&&X\ar@{-}[dr(1)]&&Y\ar@{-}[dl(1)]&&&&&&&&&&&&&&&& \\
&&&&\text{\footnotesize $(a)$}&&&&&&&\text{\footnotesize $(d)$}&&&&&&&&\text{\footnotesize $(e)$}&&&& 
}$$
$$\xymatrixcolsep{.03pc}
\xymatrixrowsep{.03pc}
\entrymodifiers={=<1pc>} \xymatrix{ &&&&&B\ar@{-}[dd]&&A\ar@{-}[dd]&&&&&&&&&&A\ar@{-}[dd]&&B\ar@{-}[dd]&&&& \\
&&&&&&&&&&&&&&&&&&&&&&& \\
\pm&A\ar@{-}[ddrr(1)]&&X\ar@{-}[dl(1)]&&Y\ar@{-}[ddll(1)]&\pm&X\ar@{-}[ddrr(1)]&&B\ar@{-}[dr(1)]&&Y\ar@{-}[ddll(1)]&\pm&B\ar@{-}[ddrr(1)]&&X\ar@{-}[dl(1)]&&Y\ar@{-}[ddll(1)]&\pm&X\ar@{-}[ddrr(1)]&&A\ar@{-}[dr(1)]&&Y\ar@{-}[ddll(1)] \\
&&&&&&&&&&&&&&&&&&&&&&& \\
&\text{\footnotesize $(e)$}&&&&&&\text{\footnotesize $(b)$}&&&&&&\text{\footnotesize $(d)$}&&&&&&\text{\footnotesize $(c)$}&&&& 
}$$
$$\xymatrixcolsep{.03pc}
\xymatrixrowsep{.03pc}
\entrymodifiers={=<1pc>} \xymatrix{ &&&&&A\ar@{-}[dr(1)]&&B\ar@{-}[dl(1)]&&&&&A\ar@{-}[d(1)]&&&&&&&&&&B\ar@{-}[d(1)]& \\
&&&&&&*{}\ar@{-}[d]&&&&&&\stackrel{}{\overbrace{\hskip 1cm}}&&&&&&&&&&\stackrel{}{\overbrace{\hskip 1cm}}& \\
=&&&\pm&&&\stackrel{}{\overbrace{\hskip 1cm}}&&&\pm&&X\ar@{-}[ddrr(1)]&&Y\ar@{-}[dl(1)]&&B\ar@{-}[ddll(1)]&&+&&A\ar@{-}[ddrr(1)]&&X\ar@{-}[dr(1)]&&Y.\ar@{-}[ddll(1)] \\
&&&&&X\ar@{-}[dr(1)]&&Y\ar@{-}[dl(1)]&&&&&&&&&&&&&&&& \\
&&&&\text{\footnotesize $(a)$}&&&&&&&\text{\footnotesize $(b)+(d)$}&&&&&&&&\text{\footnotesize $(c)+(e)$}&&&& 
}$$
\hfill $\square$
\end{remark}
We can generalize these results for more than two derivations.
\begin{lemma}
For $k\geq 3$ and $A_1, \dots, A_k\in \mathbb{L}(V)$, we have $[[[[D, \sigma_{A_1}], \sigma_{A_2}]\cdots ], \sigma_{A_k}]=0$ on $\mathbb{L}(W_0)$.
\end{lemma}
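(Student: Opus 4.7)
The plan is a double-layered induction on $k$. Since the iterated bracket $\delta_k:=[[\cdots[D,\sigma_{A_1}],\ldots],\sigma_{A_k}]$ is a derivation of $L_1$ and $\mathbb{L}(W_0)$ is the sub-Lie-algebra generated by $W_0$, it suffices to prove $\delta_k(w)=0$ for every $w\in W_0$.

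First I would record three elementary observations about the derivations $\sigma_A$ with $A\in\mathbb{L}(V)$: (a) $\sigma_A(v)=0$ for $v\in V$, since $\sigma_v(v')=0$ on $V$; (b) $\sigma_A(s(v\otimes w))=0$ by Definition \ref{derivations}; and consequently (c) $\sigma_A$ annihilates the entire sub-Lie-algebra $\mathbb{L}(V\oplus s(V\otimes W))$. A key consequence is that for $w\in W_0$ the element $\sigma_{A_k}(w)=(-1)^{|A_k||w|}\sigma_w(A_k)$ lies in $\mathbb{L}(V\oplus s(V\otimes W_0))$, since $\sigma_w$ applied to a bracket in $\mathbb{L}(V)$ yields an expression involving only the generators $V$ and $s(V\otimes w)$.

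The heart of the proof is the auxiliary claim: \emph{for every $k\geq 2$, $\delta_k$ vanishes on $\mathbb{L}(V\oplus s(V\otimes W_0))$.} I would prove this by induction on $k$. For the base $k=2$, $\delta_2(v)=0$ follows at once from (a), (c) and $\partial v\in\mathbb{L}(V)$. For $\delta_2(s(v'\otimes w))=0$ with $w\in W_0$, I would substitute the relevant case of $D(s(v'\otimes w))$ from Definition \ref{L1}, use the identity $\sigma_{A_1}([v',w])=(-1)^{|v'|(|A_1|+1)+|A_1||w|}[v',\sigma_w(A_1)]$ together with $\sigma_{A_1}(\sigma_w(\partial v'))=0$ (by (c), since $\partial v'\in\mathbb{L}(V)$), and then observe that $\sigma_{A_2}$ kills $[v',\sigma_w(A_1)]\in\mathbb{L}(V\oplus s(V\otimes W))$, again by (c). The inductive step is routine: for any generator $x\in V\cup s(V\otimes W_0)$ one has $\sigma_{A_{k+1}}(x)=0$ by (a) or (b) and $\delta_k(x)=0$ by IH, so
\[\delta_{k+1}(x)=\delta_k\sigma_{A_{k+1}}(x)-(-1)^{|\delta_k|(|A_{k+1}|+1)}\sigma_{A_{k+1}}\delta_k(x)=0.\]

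The main statement now follows by expanding
\[\delta_k(w)=\delta_{k-1}(\sigma_{A_k}(w))-(-1)^{|\delta_{k-1}|(|A_k|+1)}\sigma_{A_k}(\delta_{k-1}(w)).\]
The first term vanishes because $\sigma_{A_k}(w)\in\mathbb{L}(V\oplus s(V\otimes W_0))$ and $\delta_{k-1}$ annihilates this subalgebra by the auxiliary claim (using $k-1\geq 2$). For the second term a short secondary induction on $k\geq 3$ suffices: for $k=3$, Lemma \ref{secondformulalemma} yields $\delta_2(w)=(-1)^{|A_1|+1}\sigma_{[A_1,A_2]}(w)\in\mathbb{L}(V\oplus s(V\otimes W_0))$, on which $\sigma_{A_3}$ vanishes by (c); for $k\geq 4$ the outer IH gives $\delta_{k-1}(w)=0$ directly. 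The main obstacle is the sign bookkeeping in the base case $\delta_2(s(v'\otimes w))=0$, which has to be handled separately according to whether $v'\in V_0$ or $v'\in V_1$ using the explicit formulas of Definition \ref{L1}.
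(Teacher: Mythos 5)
Your proof is correct, but it is structured quite differently from the paper's. The paper disposes of the lemma in two lines: by Lemma \ref{secondformulalemma}, $[[D,\sigma_{A_1}],\sigma_{A_2}]=(-1)^{|A_1|+1}\sigma_{[A_1,A_2]}$ on $\mathbb{L}(W_0)$, and by Lemma \ref{composition2derivations}(ii) any bracket $[\sigma_B,\sigma_C]$ with $B,C\in\mathbb{L}(V)$ vanishes, so bracketing once more with $\sigma_{A_3}$ gives zero and the result follows. Strictly speaking, that substitution needs a word of justification: the identity of Lemma \ref{secondformulalemma} holds only on $\mathbb{L}(W_0)$, whereas evaluating $[[[D,\sigma_{A_1}],\sigma_{A_2}],\sigma_{A_3}]$ on $w\in W_0$ also requires controlling $[[D,\sigma_{A_1}],\sigma_{A_2}]$ on $\sigma_{A_3}(w)$, which lies in $\mathbb{L}(V\oplus s(V\otimes W_0))$ rather than in $\mathbb{L}(W_0)$. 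Your auxiliary claim --- that the iterated bracket $\delta_k$ annihilates $\mathbb{L}(V\oplus s(V\otimes W_0))$ for every $k\geq 2$ --- is precisely the verification that makes this legitimate, and your double induction then yields the statement without ever invoking Lemma \ref{composition2derivations}(ii). What the paper's route buys is brevity, leaning entirely on the two preceding lemmas and a formal bracket computation; what yours buys is a self-contained argument in which the domains of validity of the various derivation identities are kept completely explicit. Both arrive at the same place; yours is longer but closes a small gap that the paper leaves implicit.
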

\begin{proof}
By Lemma \ref{secondformulalemma} we have $[[D, \sigma_{A_1}], \sigma_{A_2}]=(-1)^{|A_1|}\sigma_{[A_1, A_2]}$ and by Lemma \ref{composition2derivations} (ii), $[\sigma_B, \sigma_C]=0$ for any $B, C\in \mathbb{L}(V)$. Therefore, $[[[ D, \sigma_{A_1}], \sigma_{A_2}], \sigma_{A_3}]=-(-1)^{|A_1|}[\sigma_{[A_1, A_2]}, \sigma_{A_3}]=0$ and the result follows.
\end{proof}
\begin{lemma}\label{3derivations}
For $A, B, C\in \mathbb{L}(V_0)$ and $T\in \mathbb{L}(W_0)$, we have
\begin{align*}
D(\sigma_A \sigma_B \sigma_C (T)))&=(-1)^{|A|+|B|}\sigma_A\sigma_{[B, C]}(T)-(-1)^{|A||B|}\sigma_B \sigma_{[A, C]}(T)\\
&-(-1)^{|A|}\sigma_{[A, B]}\sigma_C (T)-(-1)^{|A|+|B|+|C||T|}[\sigma_A\sigma_B (T), C]\\
&+[A, \sigma_B\sigma_C (T)]-(-1)^{|A|+|B|+|A||B|}[B, \sigma_A\sigma_C (T)].
\end{align*}
\end{lemma}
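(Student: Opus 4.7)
The plan is to compute $D\sigma_A\sigma_B\sigma_C(T)$ by peeling off one $\sigma$ at a time and invoking the lemmas already established. Since $T\in\mathbb{L}(W_0)$ and $D$ vanishes on $W_0$, we have $DT=0$, so the graded commutator relation $D\sigma_A=[D,\sigma_A]+(-1)^{|A|+1}\sigma_A D$ gives
$$D\sigma_A\sigma_B\sigma_C(T) = [D,\sigma_A]\bigl(\sigma_B\sigma_C(T)\bigr) + (-1)^{|A|+1}\sigma_A\bigl(D\sigma_B\sigma_C(T)\bigr).$$
The second summand is handled at once by Lemma~\ref{secondformulalemma}, which rewrites $D\sigma_B\sigma_C(T)$ as a sum of three brackets; applying the derivation $\sigma_A$ to each and using $\sigma_A(B)=\sigma_A(C)=0$ (since $B,C\in\mathbb{L}(V)$) produces, after sign bookkeeping, the three terms of the target formula involving $\sigma_A\sigma_{[B,C]}(T)$, $[\sigma_A\sigma_B(T),C]$, and $[B,\sigma_A\sigma_C(T)]$.

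For the first summand, expand
$$[D,\sigma_A]\sigma_B = \bigl[[D,\sigma_A],\sigma_B\bigr] + (-1)^{|A|(|B|+1)}\sigma_B[D,\sigma_A].$$
The crucial observation is that the identity $\bigl[[D,\sigma_A],\sigma_B\bigr] = (-1)^{|A|+1}\sigma_{[A,B]}$ from Lemma~\ref{secondformulalemma}, originally stated on $\mathbb{L}(W_0)$, in fact extends as an equality of derivations of $L$ to the subalgebra $L' := \mathbb{L}\bigl(V_0\oplus W_0\oplus s(V_0\otimes W_0)\bigr)$. Since both sides are derivations of $L$, it suffices to check agreement on the generators of $L'$: on $V_0$ both vanish, on $W_0$ this is Lemma~\ref{secondformulalemma}, and on $s(V_0\otimes W_0)$ both sides also vanish, the latter because $\sigma_B\sigma_w(A)=0$ for any $A\in\mathbb{L}(V_0)$ and $w\in W_0$ (immediate, since $\sigma_B$ kills every generator appearing in $\sigma_w(A)\in\mathbb{L}(V_0\oplus s(V_0\otimes\{w\}))$). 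As $\sigma_C(T)\in L'$, applying the extended identity to $\sigma_C(T)$ yields the term in $\sigma_{[A,B]}\sigma_C(T)$.

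To finish, decompose $[D,\sigma_A]\sigma_C = \bigl[[D,\sigma_A],\sigma_C\bigr] + (-1)^{|A|(|C|+1)}\sigma_C[D,\sigma_A]$ and evaluate at $T\in\mathbb{L}(W_0)$, where $\bigl[[D,\sigma_A],\sigma_C\bigr](T) = (-1)^{|A|+1}\sigma_{[A,C]}(T)$ by Lemma~\ref{secondformulalemma} and $[D,\sigma_A](T) = [A,T]$ by Lemma~\ref{firstLemma}; premultiplying by $(-1)^{|A|(|B|+1)}\sigma_B$ and using $\sigma_B(A)=0$ gives the remaining two terms $\sigma_B\sigma_{[A,C]}(T)$ and $[A,\sigma_B\sigma_C(T)]$. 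The main obstacle is the middle step, upgrading the derivation identity $\bigl[[D,\sigma_A],\sigma_B\bigr] = (-1)^{|A|+1}\sigma_{[A,B]}$ from $\mathbb{L}(W_0)$ to $L'$; once this is granted, the rest is mechanical Koszul-sign tracking.
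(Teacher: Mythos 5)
Your proof is correct --- I have checked the Koszul signs in all six resulting terms and they match the statement --- but it is organized differently from the paper's. The paper derives the formula by expanding the iterated bracket $\bigl[[[D,\sigma_A],\sigma_B],\sigma_C\bigr]$ and invoking the preceding (unnumbered) lemma asserting that this triple bracket vanishes on $\mathbb{L}(W_0)$; setting the full expansion evaluated at $T$ equal to zero and simplifying the lower-order compositions via Lemma \ref{secondformulalemma} and Remark \ref{iii and iv} isolates $D\sigma_A\sigma_B\sigma_C(T)$. You never use that vanishing lemma: you peel off $\sigma_A$ first, dispatch the summand $\sigma_A\bigl(D\sigma_B\sigma_C(T)\bigr)$ directly with formula (\ref{secondformula}), and then iterate the commutator decomposition on the remaining summand. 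The price of your route is that you must apply the identity $\bigl[[D,\sigma_A],\sigma_B\bigr]=(-1)^{|A|+1}\sigma_{[A,B]}$ to the element $\sigma_C(T)$, which does not lie in $\mathbb{L}(W_0)$, and you pay it honestly by extending the identity to the subalgebra $\mathbb{L}\bigl(V_0\oplus W_0\oplus s(V_0\otimes W_0)\bigr)$ through a check on generators (the only nontrivial case being $s(V_0\otimes W_0)$, where the term $\sigma_B\sigma_A D(s(v\otimes w))=\pm\sigma_B\sigma_A([v,w])$ dies because $\sigma_B\sigma_w(A)=0$). This is a genuine gain in rigor: the paper's own argument --- both in the unnumbered $k\geq 3$ lemma and in the development of the triple bracket --- quietly needs exactly this extension when the outer $\sigma_C$ is moved past $\bigl[[D,\sigma_A],\sigma_B\bigr]$, and leaves it implicit. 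So the two proofs rest on the same commutator calculus and the same key input (Lemma \ref{secondformulalemma}), but your decomposition is different, avoids one auxiliary lemma, and makes explicit a domain issue the paper glosses over.
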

\begin{proof}
This follows from the previous Lemma and the development of the bracket $[[[D, \sigma_A ],\sigma_B], \sigma_C]$.
\end{proof}

\subsection{Star operator}

Let $V$ be a finite dimensional graded $\mathbb{Q}$-vector space. We will use the following description of $\mathbb{L}(V)$. For details on this construction see \cite[Section 8.1]{BF}.

First we consider the free graded linear magma on $V$ (graded vector space with a binary operation), $\mathbb{M}(V)=\bigoplus_{n=1}^\infty M_n$ defined by
$$M_1 = V\ \text{ and } M_n= \bigoplus _{i+j=n} M_i \otimes M_j\ \text{ for } n\geq 2,$$
with the (not associative) multiplication induced by the inclusions $M_i \otimes M_j \hookrightarrow M_{i+j}$. For $a \in M_i $ and $b\in M_j$, we write $ab$ instead of $a\otimes b$. The degree of the elements of $V$ induces naturally a degree on $\mathbb{M}(V)$. Let $I$ be the two-sided ideal of $\mathbb{M}(V)$ generated by all the elements of the form
\begin{equation}\label{FreeLie}
ab + (-1)^{|a||b|}ba,\hskip .5cm J(a, b, c)=a(bc)-(ab)c-(-1)^{|a||b|}b(ac).
\end{equation}

Denote by $\underline{A}$ the image of $A\in \mathbb{M}(V)$ under the natural projection in the quotient vector space $\mathbb{M}(V)\to \mathbb{M}(V)/I$. The bracket $[\underline{A}, \underline{B}]=\underline{AB}$ is well defined and endows $\mathbb{M}(V)/I$ with the structure of graded Lie algebra.

Moreover, any linear map $f\colon V\to L$ of degree $0$, where $L$ is a graded Lie algebra can be uniquely extended to $\mathbb{M}(V)$ through the formula $f(AB)=[f(A), f(B)]$ and this extension uniquely factorizes through $\mathbb{M}(V)/I$ giving rise to a morphism of graded Lie algebras.

This exhibits $\mathbb{M}(V)/I = \mathbb{L}(V)$ as the free graded Lie algebra on $V$. 

\begin{definition}\label{staroperator}
The {\em star operator} is the linear map 
$$\star \colon \mathbb{M}(V)\otimes \mathbb{L}(W)\to \mathbb{L}(V\oplus W \oplus s(V\otimes W)),$$ of degree $2$ defined inductively by $a\star T=0$ if $a\in V$ and $T\in \mathbb{L}(W)$ and
\begin{equation} \label{definitionstaroperator}
AB\star T=(-1)^{|A|} \sigma_{\underline{A}}\sigma_{\underline{B}}(T)+(-1)^{|B||T|}[A\star T, \underline{B}]+[\underline{A}, B\star T],
\end{equation} \label{staroperator}
for $A, B\in \mathbb{M}(V)$ and $T\in \mathbb{L}(W)$.
\end{definition}
\begin{remark} ({\bf Star operator in tree notation}) As in previous remarks we also show how to represent the star product in tree notation. We denote the multiplication of two elements  $a,b \in \mathbb{M}(V)$ by $\xymatrixcolsep{.5pc}
\xymatrixrowsep{-.2pc}
\entrymodifiers={=<1pc>} \xymatrix{\hskip .7cm a\hskip .5cm b&\\ \hskip .6cm\RS{{LLL}{RRR}}&\hskip -.55cm\RS{{LLL}{RRR}}} $. Then, formula $(\ref{staroperator})$ becomes:

$$
 \xymatrixcolsep{.5pc}
\xymatrixrowsep{.5pc}
\entrymodifiers={=<1pc>} \xymatrix{
&&&&&&\text{ \LARGE $\underline{A}$}\ar@{-}[ddr(0.8)]&&\hskip -.2cm\text{ \LARGE $\underline{B}$}\ar@{-}[ddl(0.8)]&&&&&&&&& \\
\text{ \huge $A$\ }\ar@{=}[dr(1)]&&\text{ \huge$B$}\ar@{=}[dl(1)]&\text{ \LARGE $\star $}&\text{ \huge$T$}&=&\hskip -.2cm\pm&&\hskip .8cm\pm&&\text{\LARGE $A\star T$}\ar@{-}[dr(1)]&&\text{ \LARGE $\underline{B}$}\ar@{-}[dl(1)]&\hskip .2cm+&\text{ \LARGE $\underline{A}$}\ar@{-}[dr(1)]&&\text{ \LARGE $B\star T$}\ar@{-}[dl(1)]& \\
&&&&&&&\text{\huge $T$}&&&&&&&&&& 
}$$
\vskip .3cm
\end{remark}

\begin{remark}\label{antisymmetry}({\bf Star operator and antisymmetry})
The star operator respects antisymmetry. Indeed,
\begin{align*}
BA\star T&=(-1)^{|B|} \sigma_{\underline{B}}\sigma_{\underline{A}}(T)+(-1)^{|A||T|}[B\star T, \underline{A}]+[\underline{B}, A\star T]\\
&=(-1)^{|B|}(-1)^{(|A|+1)(|B|+1)}\sigma_{\underline{A}}\sigma_{\underline{B}}(T)\\
&\hskip .5cm -(-1)^{|B|(|A|+|T|)}[A\star T, \underline{B}]-(-1)^{|A||T|+|A|(|B|+|T|)}[\underline{A}, B\star T],
\end{align*}
where we have used Lemma \ref{composition2derivations}(ii) in the second equality. Therefore,
\begin{align*}
-(-1)^{|A||B|} BA\star T &=(-1)^{|A|} \sigma_{\underline{A}}\sigma_{\underline{B}}(T)+(-1)^{|B||T|}[A\star T, \underline{B}]+[\underline{A}, B\star T]\\
&=AB\star T.
\end{align*}
\end{remark}
\begin{example}
Consider the graded vector spaces $V=\langle a, b, c \rangle$ and $W=\langle y , z\rangle $, where $|a|=2$, $|b|=|y|=3$, $|c|=|z|=7$. Then,
\begin{align*}
 \mathbb{M}(a, b, c )&=\langle a, b, c \rangle \oplus \langle aa, ab, ac, ba, bb, bc, ca, cb, cc \rangle \\
 &\oplus \langle a(aa), (aa)a, a(ab), (ab)a, a(ac), (ac)a, \dots  \\
 &\hskip .6cm b(aa), (aa)b, b(ab), (ab)b, b(ac), (ac)b, \dots \\
 &\hskip .6cm c(aa), (aa)c, c(ab), (ab)c, c(ac), (ac)c, \dots \rangle \oplus \cdots 
\end{align*}
 The generators of $\mathbb{L}(V\oplus W \oplus s(V\otimes W))$ of the form $s(v\otimes w)$ where $v\in V$ and $w\in W$ will be denoted by $vw$ to avoid excessive notation and recall that $|vw|=|v|+|w|+1$. Consider $A=a, B=bc \in \mathbb{M}(V)$ and $T=[y , z]\in \mathbb{L}(W)$. Then we have,
\begin{align*}
AB\star T&=a(bc)\star [y, z]=\sigma_a\sigma_{[b, c]}([y,z])+\Bigl[ a \star [y, z], [b,c] \Bigr] +\Bigl[ a , bc\star [y, z]\Bigr]\\
&=\sigma_a \Bigl[ \sigma_{[b, c]}(y), z\Bigr] -\sigma_a \Bigl[ y, \sigma_{[b, c]}(z)\Bigr] - \Bigl[ a, \sigma_b\sigma_c[y, z]\Bigr]\\
&=\sigma_a \Bigl[ \sigma_y [b, c], z\Bigr] -\sigma_a \Bigl[ y , \sigma_z[b, c]\Bigr] - \Bigl[ a , \sigma_b [cy, z]\Bigr] - \Bigl[ a , \sigma_b [y, cz]\Bigr]\\
&=-\sigma_a \Bigl[ [by,c],z\Bigr]-\sigma_a \Bigl[ [b, cy], z\Bigr] + \sigma_a\Bigl[y, [bz, c]\Bigr]+\sigma_a\Bigl[ y , [b , cz]\Bigr]\\
&-\Bigl[a, [cy, bz]\Bigr] -\Bigl[ a, [by , cz]\Bigr]\\
&=-\Bigl[[by, c], az\Bigr] -\Bigl[[b, cy ], az\Bigr] +\Bigl[ ay , [bz, c]\Bigr] + \Bigl[ ay , [b, cz]\Bigr]\\
&-\Bigl[ a, [cy, bz]\Bigr] -\Bigl[ a, [by , cz]\Bigr].
\end{align*}

Then, a similar computation using Definition \ref{staroperator} leads us to:
\begin{eqnarray*}
-(ab)c\star [y, z]&=&-\Bigl[ cy ,[az, b]\Bigr] + \Bigl[ cy, [a, bz] \Bigr]-\Bigl[ [ay, b], cz\Bigr]\\
&& + \Bigl[ [a, by], cz\Bigr]+\Bigl[[by, az], c\Bigr] - \Bigl[ [ay, bz ], c\Bigr],
\end{eqnarray*}
\begin{eqnarray*}
-b(ac)\star[y, z]&=&-\Bigl[ [ay, c], bz\Bigr] + \Bigl[ [a, cy ], bz \Bigr]-\Bigl[ by, [az, c]\Bigr] \\
&&+ \Bigl[ by, [a, cz]\Bigr]+\Bigl[ b, [cy, az]\Bigr] - \Bigl[ b, [ay, cz ]\Bigr],
\end{eqnarray*}
and by Jacobi identity on the Lie algebra $\mathbb{L}(V\oplus W \oplus s(V\otimes W))$ we obtain
$$J(a, b, c)\star [y, z]=\Bigl( a(bc)- (ab)c-b(ac)\Bigr) \star [y, z]=0.$$
One might think that the star product also respects the Jacobi identity but the following example shows that this is not the case.
\end{example}
\begin{example} \label{ExampleStarProduct}
Consider the graded vector spaces $V=\langle a, b, c \rangle$ and $W'=\langle x, y , z\rangle $, where $|a|=|x|=2$, $|b|=|y|=3$, $|c|=|z|=7$. Then,
\begin{align*}
a(bc)\star \Bigl[ x, [y,z]\Bigr]&=\sigma_a\sigma_{[b,c]}([x ,[y, z]])+\Bigl[ a \star [x,[y,z]], [b, c] \Bigr] +\Bigl[ a, bc\star [x, [y,z]]\Bigr]\\
&=\sigma_a\Bigl(\Bigl[ \sigma_{[b, c]}(x), [y,z]\Bigr] +  \Bigl[ x, [\sigma_{[b,c]}(y), z]\Bigr]-\Bigl[ x, [y, \sigma_{[b,c]}(z)]\Bigr]\Bigr)\\
&-\Bigl[ a , \sigma_b \sigma_c [x,[y,z]]\Bigr]\\
&=\sigma_a\Bigl(\Bigl[ \sigma_x [b,c], [y,z]\Bigr] +  \Bigl[ x, [\sigma_y [b,c], z]\Bigr]-\Bigl[ x, [y, \sigma_z[b,c]]\Bigr]\Bigr)\\
&-\Bigl[ a, \sigma_b \Bigl( [cx, [y, z]]+[x,[cy, z]]+[x, [y, cz]]\Bigr) \Bigr]
\end{align*}
\begin{align*}
&=\sigma_a \Bigl( \Bigr[[bx, c]- [b , cx], [y, z]\Bigr]+\Bigr[ x, [-[by, c]-[b, cy], z]\Bigr]+\Bigl[x, [y, [bz, c] + [b, cz]]\Bigr]\Bigr) \\
&-\Bigl[a, [cx, [by, z]]+[cx, [y, bz]]\Bigr]-\Bigl[a, [bx, [cy, z]]+[x,[cy, bz]]\Bigr]\\
&-\Bigl[a, [bx,[y, cz]]+[x,[by, cz]]\Bigr]\\
&=-\Bigl[ [bx, c],[ay, z]\Bigr] +\Bigl[ [bx, c], [y, az]\Bigr]+\Bigl[[b, cx], [ay, z]\Bigr]\\
&\hskip .3cm -\Bigl[[b, cx], [y, az]\Bigr]-\Bigl[ ax, [[by, c], z]\Bigr]-\Bigl[x, [[by , c], az]\Bigr]\\
&\hskip .3cm -\Bigl[ax, [[b , cy], z]\Bigr]-\Bigl[ x, [[b , cy ], az]\Bigr] \hskip.08cm +\hskip .08cm\Bigr[ax, [y,[bz, c]]\Bigr]\\
&\hskip .45cm +\Bigr[x, [ay, [bz, c]]\Bigr]+\Bigl[ax, [y, [b , cz]]\Bigr] +\Bigl[x, [ay, [b, cz]]\Bigr]\\
&\hskip .3cm -\Bigl[ a, [cx, [by, z]]\Bigr] -\Bigl[ a, [cx, [y, bz]]\Bigr]-\Bigl[a, [bx, [cy, z]]\Bigr]\\
&\hskip .45cm - \Bigr[a, [x, [cy, bz]]\Bigr] - \Bigl[a, [bx, [y, cz]]\Bigr] - \Bigl[ a, [x, [by, cz]]\Bigr].
\end{align*}
Then, a similar computation using Definition \ref{staroperator} leads us to:
\begin{align*}
-(ab)c\star [x, [y, z]]
&=-\Bigl[ cx, [[ay, b], z]\Bigr]+\Bigl[cx, [[a, by], z]\Bigr]-\Bigl[cx, [y,[az, b]]\Bigr]\\
&\hskip .3cm+\Bigl[cx, [y, [a, bz]]\Bigr]+\Bigl[[ax, b], [cy, z]\Bigr]+\Bigl[[a, bx], [cy, z]\Bigr]\\
&\hskip .3cm-\Bigl[x, [cy, [az, b]]\Bigr]+\Bigl[x, [cy, [a, bz]]\Bigr]+\Bigl[[ax,b], [y, cz]\Bigr]\\
&\hskip .3cm+\Bigl[[a, bx], [y, cz]\Bigr]-\Bigl[x, [[ay, b], cz]\Bigr]+\Bigl[x, [[a, by], cz]\Bigr]\\
&\hskip .3cm-\Bigl[[bx, [ay, z]], c\Bigr] + \Bigl[[bx, [y, az]], c\Bigr]-\Bigl[[ax, [by, z]], c\Bigr]\\
&\hskip .3cm+\Bigl[[x, [by, az]], c\Bigr]-\Bigl[[ax, [y, bz]], c\Bigr]-\Bigl[[x, [ay, bz]], c\Bigr].
\end{align*}
\begin{align*}
-b(ac)\star [x, [y, z]]&=+\Bigl[ [ax, c],[by, z]\Bigr] +\Bigl[ [ax, c], [y, bz]\Bigr]+\Bigl[[a, cx], [by, z]\Bigr]\\
&\hskip .3cm +\Bigl[[a, cx], [y, bz]\Bigr]-\Bigl[ bx, [[ay, c], z]\Bigr]-\Bigl[x, [[ay , c], bz]\Bigr]\\
&\hskip .3cm +\Bigl[bx, [[a , cy], z]\Bigr]+\Bigl[ x, [[a , cy ], bz]\Bigr] \hskip.08cm -\hskip .08cm\Bigr[bx, [y,[az, c]]\Bigr]\\
&\hskip .45cm -\Bigr[x, [by, [az, c]]\Bigr]+\Bigl[bx, [y, [a , cz]]\Bigr] +\Bigl[x, [by, [a, cz]]\Bigr]\\
&\hskip .3cm -\Bigl[ b, [cx, [ay, z]]\Bigr] +\Bigl[ b, [cx, [y, az]]\Bigr]-\Bigl[b, [ax, [cy, z]]\Bigr]\\
&\hskip .45cm + \Bigr[b, [x, [cy, az]]\Bigr] - \Bigl[b, [ax, [y, cz]]\Bigr] - \Bigl[ b, [x, [ay, cz]]\Bigr] 
\end{align*}

We can compute now 
$$J(a, b, c)\star [x, [y,z]]=\Bigl( a(bc)- (ab)c-b(ac)\Bigr) \star [x, [y, z]].$$
For example, the terms including $ay$ and $bx$ are
$$-\Bigl[[bx, c], [ay, z]\Bigr] -\Bigl[[bx, [ay, z]],c\Bigr] - \Bigl[ bx, [[ay, c], z]\Bigr]=(\dag ).$$
Using Jacobi identity twice we obtain
\begin{align*}
(\dag )&=-\Bigl[[bx, c], [ay, z]\Bigr]
-\Bigl(\Bigl[ bx, [[ay, z], c]\Bigr]-\Bigl[[bx, c], [ay, z]\Bigr]\Bigr)-\Bigl[bx, [[ay, c], z]\Bigr]\\
&= -\Bigl[[bx, c], [ay, z]\Bigr]+ \Bigl[[bx, c], [ay, z]\Bigr]- \Bigl( \Bigl[ bx ,[ ay,[z, c]]\Bigr]-\Bigl[ bx, [[ay, c], z]\Bigr]\Bigr)\\
&\hskip .35cm -\Bigl[bx, [[ay, c], z]\Bigr]\\
&=-\Bigl[ bx, [ay , [z, c]]\Bigr]\not= 0.
\end{align*}

With a similar computation, the terms including $bx$ and $az$ are
$$\Bigl[[bx, c], [y, az]\Bigr]+\Bigl[ [bx, [y, az]], c\Bigr]- \Bigl[ bx , [y, [az, c]]\Bigr] = \Bigl[bx, [[c, y], az]\Bigr]\not= 0,$$
and so on with the $18$ different combinations obtaining
\begin{align*}
J(a, b, c)\star [x, [y,z]]&=\Bigl( a(bc)- (ab)c-b(ac)\Bigr) * [x, [y, z]]\\
&=+\Bigl[ [c, x], [by, az]\Bigr]+\Bigl[ cx, [[b, y], az\Bigr]-\Bigl[ cx, [by, [a, z]]]\Bigr]\\
&\hskip .3cm-\Bigl[[c, x],[ay, bz]]\Bigr] -\Bigl[cx,[[a, y], bz]\Bigr]-\Bigl[cx, [ay, [b, z]]\Bigr]\\
&\hskip .3cm+\Bigl[[b, x], [cy, az]\Bigr]+ \Bigl[bx, [[c, y], az]\Bigr]-\Bigl[ bx, [cy, [a, z]]\Bigr]\\
&\hskip .3cm-\Bigl[[a, x], [cy, bz]\Bigr] +\Bigl[ax, [[c, y], bz]\Bigr]-\Bigl[ax, [cy, [b, z]]\Bigr]\\
&\hskip .3cm - \Bigl[ [b , x], [ay, cz]\Bigr] -\Bigl[ bx, [[a, y], cz]\Bigr] -\Bigl[ bx, [ay, [c, z]]\Bigr]\\
&\hskip .3cm-\Bigl[[a, x], [by, cz]\Bigr] + \Bigl[ ax, [[b, y], cz]\Bigr] -\Bigl[ ax, [by, [c ,z]]\Bigr].
\end{align*}
One can check that if $a, b, c, \in V_0$ and $x, y , z \in W_0$ the above summation agrees with
\begin{align*}
-D\Bigl(\sigma_a \sigma_b\sigma_c ([x, [y, z]])\Bigr)&=-D\Bigl( -\Bigl[ cx, [ by , az]\Bigr] + \Bigl[ cx, [ay, bz]\Bigr]-\Bigl[ bx, [cy, az]\Bigr]  \\
&\hskip 0.8cm +\Bigl[ ax, [cy, bz]\Bigr] +\Bigl[ bx, [ay, cz]\Bigr] +\Bigl[ ax, [by, cz]\Bigr] \Bigr).
\end{align*}
This is not a mere coincidence but a particular example of the next Lemma.
\end{example}

\begin{lemma}
For $A, B, C\in \mathbb{M}(V_0)$ and $T\in \mathbb{L}(W_0)$, we have
$$J(A, B, C)\star T=(-1)^{|B|}D(\sigma_{\underline{A}}\sigma_{\underline{B}}\sigma_{\underline{C}} (T)).$$
\end{lemma}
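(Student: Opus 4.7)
The proof is a direct expansion-and-comparison---no induction is needed. Setting $a = \underline{A}$, $b = \underline{B}$, $c = \underline{C}\in\mathbb{L}(V_0)$, I would apply Definition \ref{staroperator} twice to each of $A(BC)\star T$, $(AB)C\star T$, and $B(AC)\star T$: once to the outermost product, and once more to the inner product $BC$, $AB$, or $AC$ respectively. Each such expansion yields five summands, so $J(A,B,C)\star T$ becomes a sum of fifteen terms that naturally split into two groups.

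The first group consists of the six \emph{pure $\sigma$} summands---those of the shapes $\sigma_a\sigma_{[b,c]}(T)$, $\sigma_{[a,b]}\sigma_c(T)$, $\sigma_b\sigma_{[a,c]}(T)$, $[a,\sigma_b\sigma_c(T)]$, $[\sigma_a\sigma_b(T),c]$, and $[b,\sigma_a\sigma_c(T)]$. A term-by-term comparison of signs shows that these six match, after factoring out $(-1)^{|B|}$, the six-term formula of Lemma \ref{3derivations} for $D(\sigma_a\sigma_b\sigma_c(T))$; this is a purely mechanical verification.

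The second group consists of the nine \emph{remainder} summands, each involving $A\star T$, $B\star T$, or $C\star T$ inside nested brackets with $a,b,c$. I would partition these into three triples indexed by which of $A\star T$, $B\star T$, $C\star T$ appears, and show each triple vanishes on its own. For example, the $A\star T$ triple reads
$$
(-1)^{(|B|+|C|)|T|}[A\star T,[b,c]] - (-1)^{(|B|+|C|)|T|}[[A\star T,b],c] - (-1)^{|A||B|+|C||T|}[b,[A\star T,c]],
$$
and a single application of graded Jacobi to $[A\star T,[b,c]]$, together with the parity identity $|A\star T|\cdot|b|\equiv|A||B|+|B||T|\pmod{2}$ coming from $|A\star T|=|A|+|T|+2$, gives exact cancellation. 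The $B\star T$ and $C\star T$ triples collapse in the same way, the latter being the most immediate.

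The main obstacle is bookkeeping rather than conceptual: Definition \ref{staroperator} is engineered precisely so that these three Jacobi cancellations occur, but any single parity error propagates through the whole expression. Organizing the summands by which of $A\star T$, $B\star T$, $C\star T$ they contain (rather than by which of $A(BC)$, $(AB)C$, $B(AC)$ they originated from) is what makes the cancellations transparent and completes the proof.
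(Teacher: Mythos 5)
Your proof is correct and takes essentially the same route as the paper's: expand $J(A,B,C)\star T$ via the recursive definition of $\star$, match the six pure-$\sigma$ summands against the formula of Lemma \ref{3derivations}, and observe that the nine remaining summands group into the three Jacobi expressions $J(A\star T,\underline{B},\underline{C})$, $J(\underline{A},B\star T,\underline{C})$ and $J(\underline{A},\underline{B},C\star T)$, which vanish. The only cosmetic difference is that you verify the cancellation of each triple by hand via the graded Jacobi identity, whereas the paper simply notes that these are Jacobiators in the Lie algebra $\mathbb{L}(V\oplus W\oplus s(V\otimes W))$ and hence zero.
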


\begin{proof}
We can compute $J(A, B, C)\star T = \Bigl( A (BC) - (AB)C - (-1)^{|A||B|}B(AC)\Bigr)\star T$ using the recursive definition of the star operator obtaining

\begin{align*}
&J(A, B, C)\star T=\\
&=(-1)^{|A|}\sigma_{\underline{A}}\sigma_{[\underline{B}, \underline{C}]}(T)+(-1)^{(|A|+|B|)|C|+|C|}\sigma_{\underline{C}}\sigma_{[\underline{A}, \underline{B}]}(T)\\
&+(-1)^{(|A||B|}\sigma_{\underline{B}}\sigma_{[\underline{A}, \underline{C}]}(T)+(-1)^{|B|}[\underline{A}, \sigma_{\underline{B}}\sigma_{\underline{C}}(T)]\\
&+(-1)^{(|A|+|B|)|C|+|A|}[\underline{C}, \sigma_{\underline{A}}\sigma_{\underline{B}}(T)]-(-1)^{|A||B|+|A|}[\underline{B}, \sigma_{\underline{A}}\sigma_{\underline{C}}(T)]\\
&+(-1)^{(|B|+|C|)|T|} J(A\star T, \underline{B}, \underline{C})+(-1)^{|C||T|}J(\underline{A}, B\star T, \underline{C})+ J(\underline{A}, \underline{B}, C\star T).
\end{align*}
\vskip .3cm
Since this expression lies in the Lie algebra $\mathbb{L}(V\oplus W\oplus s(V\otimes W))$, the last line vanishes and the result follows by comparison with Lemma \ref{3derivations}.
\end{proof}

\begin{lemma} \label{LemmaDstar}
Let $A\in \mathbb{L}(V_0)$ and $\overline{A}\in \mathbb{M}(V_0)$ any representative of $A$. Let $T\in \mathbb{L}(W_0)$. Then we have
\begin{equation}\label{Dstar}
D(\overline{A}\star T)=-\sigma_A (T)+(-1)^{|A||T|}\sigma_T(A).
\end{equation} 
\end{lemma}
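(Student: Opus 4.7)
The plan is to proceed by induction on the word length of $\overline{A}$ in the magma $\mathbb{M}(V_0)$, exploiting the recursive definition of $\star$. Since the right-hand side depends only on $A=\underline{\overline{A}}$, well-definedness of the left-hand side (independence of the chosen representative) follows a posteriori once the formula is proved for any particular representative; alternatively, it can be seen directly from antisymmetry (Remark~\ref{antisymmetry}) together with the preceding lemma, which via $D^{2}=0$ forces $D(J(A,B,C)\star T)=0$.

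For the base case $\overline{A}=a\in V_0$, the definition of $\star$ gives $a\star T=0$, so the left-hand side vanishes. On the other hand, Definition~\ref{derivations} gives $\sigma_T(a)=(-1)^{|a||T|}\sigma_a(T)$, so the right-hand side becomes $-\sigma_a(T)+(-1)^{|a||T|}\cdot(-1)^{|a||T|}\sigma_a(T)=0$.

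For the inductive step, write $\overline{A}=PQ$ with $P,Q\in\mathbb{M}(V_0)$ of strictly smaller length, and apply $D$ to
\[
PQ\star T=(-1)^{|P|}\sigma_{\underline{P}}\sigma_{\underline{Q}}(T)+(-1)^{|Q||T|}[P\star T,\underline{Q}]+[\underline{P},Q\star T].
\]
Since $\underline{P},\underline{Q}\in\mathbb{L}(V_0)$ we have $D\underline{P}=D\underline{Q}=0$, so the Leibniz rule leaves three surviving summands. The first is handled by formula~(\ref{secondformula}) of Lemma~\ref{secondformulalemma}, producing $-\sigma_{[\underline{P},\underline{Q}]}(T)$ together with two ``cross terms'' of the form $[\sigma_{\underline{P}}(T),\underline{Q}]$ and $[\underline{P},\sigma_{\underline{Q}}(T)]$. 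The remaining two summands are expanded by the inductive hypothesis, which writes $D(P\star T)=-\sigma_{\underline{P}}(T)+(-1)^{|P||T|}\sigma_T(\underline{P})$ and analogously for $Q$; the $\sigma_{\underline{P}}(T)$ and $\sigma_{\underline{Q}}(T)$ pieces then cancel exactly the cross terms from the first summand, leaving only $-\sigma_{[\underline{P},\underline{Q}]}(T)$ and two terms in $\sigma_T(\underline{P})$, $\sigma_T(\underline{Q})$. Using that $\sigma_T$ is a derivation of degree $|T|+1$, so that $\sigma_T([\underline{P},\underline{Q}])=[\sigma_T(\underline{P}),\underline{Q}]+(-1)^{(|T|+1)|P|}[\underline{P},\sigma_T(\underline{Q})]$, these last two terms reassemble into $(-1)^{|A||T|}\sigma_T(A)$ for $A=[\underline{P},\underline{Q}]=\underline{PQ}$, completing the induction.

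The main obstacle is the Koszul sign bookkeeping: each $\sigma_{(-)}$ contributes a degree shift of $+1$ and the star operator a shift of $+2$, so several exponents must be carefully combined. The critical check is that the sign on the surviving $[\underline{P},\sigma_T(\underline{Q})]$ piece, which from the computation is $(-1)^{|P|+|Q||T|}$, agrees with the sign $(-1)^{(|P|+|Q|)|T|+(|T|+1)|P|}$ coming from the derivation expansion of $\sigma_T([\underline{P},\underline{Q}])$; a brief parity calculation confirms they coincide. Everything else in the proof is direct substitution.
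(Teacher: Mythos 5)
Your proof is correct and follows essentially the same route as the paper's: induction on the length of a representative, expanding $D$ over the recursive definition of $\star$, invoking formula~(\ref{secondformula}) of Lemma~\ref{secondformulalemma}) for the $\sigma_{\underline{P}}\sigma_{\underline{Q}}(T)$ term, the inductive hypothesis for the two bracket terms, and the derivation property of $\sigma_T$ to reassemble $(-1)^{|A||T|}\sigma_T(A)$; your parity check on the $[\underline{P},\sigma_T(\underline{Q})]$ coefficient is also right. The only (harmless) difference is that you induct on magma word length while the paper inducts on bracket length in $\mathbb{L}(V_0)$, which makes your handling of ``any representative'' slightly more explicit.
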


\begin{proof}
We proceed by induction on the bracket-length of $A\in \mathbb{L}(V_0)$. The statement is clearly true if $A=v\in V_0$. Indeed, $D(v\star T )=D(0)=0$ by definition of the star product and $-\sigma_v (T)+ (-1)^{|v||T|}\sigma_T (v)=0$ by definition of the derivation $\sigma_T$.

Suppose that the statement is true for $A, B\in \mathbb{L}(V_0)$ and consider $[A, B]\in \mathbb{L}(V_0)$. Note that the product $\overline{A}\overline{B}\in \mathbb{M}(V_0)$ is a representative of the bracket $[A, B]\in \mathbb{L}(V_0)$. For $T\in \mathbb{L}(W_0)$ we have
\begin{align*}
D(\overline{A}\overline{B}\star T )&=D\Bigl( (-1)^{|A|}\sigma_A\sigma_B (T) +(-1)^{|B||T|}[\overline{A}\star T, B ]+[A, \overline{B}\star T]\Bigr)\\
&=(-1)^{|A|}D\Bigl( \sigma_A \sigma_B (T)\Bigr)\\
&-(-1)^{|B||T|}[\sigma_A (T), B]+(-1)^{(|A|+|B|)|T|}[\sigma_T (A), B]\\
&-(-1)^{|A|}[A, \sigma_B (T)]+(-1)^{|A|+|B||T|}[A, \sigma_T(B)].
\end{align*}
Using formula ($\ref{secondformula}$) of Lemma \ref{secondformulalemma} we obtain
\begin{align*}
D(\overline{A}\overline{B}\star T )&=-\sigma_{[A, B]}(T)+(-1)^{(|A|+|B|)|T|}[\sigma_T(A), B]+(-1)^{|A|+|B||T|}[A, \sigma_T (B)]\\
&=-\sigma_{[A, B]}(T)+(-1)^{(|A|+|B|)|T|}\sigma_T[A, B],
\end{align*}
which completes the proof.
\end{proof}

\begin{remark}
As usual we can write Lemma \ref{LemmaDstar} in tree notation as:
\begin{equation*}
\xymatrixcolsep{.5pc}
\xymatrixrowsep{.5pc}
\entrymodifiers={=<1pc>} \xymatrix{
&&&A\ar@{-}[dd]&&T\ar@{-}[dd] & \\
\hskip -.8cm D( \overline{A}\star T)&=&-&&\pm && , \\
&&&T&&A&
}
\end{equation*}
\end{remark}
\subsection{The model of the product of $2$-cones}\label{ModelProduct}

The lemmas of the previous sections allow us to prove the following Theorem.

\begin{theorem} \label{modelof2cones}
Let $X$ and $Y$ be $2$-cones modeled by $(\mathbb{L}(V), \partial _V)$ and $(\mathbb{L}(W), \partial _W)$ respectively, where $V=V_0 \oplus V_1$, $W=W_0\oplus W_1$ with $\partial_V (V_0)=\partial _W (W_0)=0$ and $\partial_V (V_1)\subset \mathbb{L}(V_0)$, $\partial_W (W_1)\subset \mathbb{L}(W_0)$.
Then, the minimal Quillen model of the Cartesian product $X\times Y$ has the form 
$$\Bigl( \mathbb{L}(V\oplus W \oplus s(V\otimes W)), D \Bigr),$$
where $D(v)= \partial _V v$, $D(w)= \partial _W w$ and
\begin{equation}\label{Differential}
D( s(v\otimes w))=[v, w]-(-1)^{(|v|+1)|w|}\sigma_{w}(\partial_V v) -(-1)^{|v|}\sigma_v (\partial_W w)+(-1)^{|v|}\overline{\partial_V v}\star \partial_W w,
\end{equation}
for every $v\in V$, $w\in W$ and $\overline{\partial_V v}$ a representative of $\partial_V v$ in $\mathbb{M}(V)$.
\end{theorem}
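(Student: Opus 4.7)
The plan is to invoke Lemma \ref{Quism}: once we check that the formula genuinely defines a differential (that is, $D^2=0$) and that the correction $D^+(s(v\otimes w)):=D(s(v\otimes w))-[v,w]$ lies in the ideal $I_s=\mathbb{L}(V\oplus W)*\mathbb{L}^+(s(V\otimes W))$, the quasi-isomorphism property is automatic. The condition $D^+\in I_s$ is immediate by inspection of the three correction terms: each of $\sigma_w(\partial_V v)$, $\sigma_v(\partial_W w)$, and $\overline{\partial_V v}\star \partial_W w$ produces an element necessarily containing at least one generator of the form $s(v'\otimes w')$, by Definition \ref{derivations} and by the inductive Definition \ref{staroperator}. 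Since $D^2$ vanishes automatically on $V\oplus W$ (it is just $\partial_V^2+\partial_W^2$ there), the task reduces to verifying $D^2(s(v\otimes w))=0$ for every pair $v\in V$, $w\in W$.

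I would then split into four cases according to the decompositions $V=V_0\oplus V_1$ and $W=W_0\oplus W_1$. Whenever $v\in V_0$ or $w\in W_0$, at least one of $\partial_V v$, $\partial_W w$ vanishes, the star-product term drops out, and the formula coincides exactly with the differential of the sub-dgl $(L_1,D)$ of Definition \ref{L1}. These three cases therefore follow from the lemma asserting that $(L_1,D)$ is a dgl.

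The heart of the argument is the remaining case $v\in V_1$, $w\in W_1$, where all four summands of $D(s(v\otimes w))$ contribute. One applies $D$ once more, evaluating each piece with the machinery already set up: $D[v,w]=[\partial_V v,w]+(-1)^{|v|}[v,\partial_W w]$; the derivative $D\sigma_w(\partial_V v)$ is computed by Remark \ref{iii and iv}(iv), producing a bracket term $(-1)^{(|v|-1)|w|}[\partial_V v,w]$ together with $-\sigma_{\partial_W w}(\partial_V v)$; symmetrically $D\sigma_v(\partial_W w)$ is computed by Remark \ref{iii and iv}(iii), giving $[v,\partial_W w]-\sigma_{\partial_V v}(\partial_W w)$; and finally $D(\overline{\partial_V v}\star \partial_W w)$ is read off from Lemma \ref{LemmaDstar}, yielding $-\sigma_{\partial_V v}(\partial_W w)+(-1)^{(|v|-1)(|w|-1)}\sigma_{\partial_W w}(\partial_V v)$. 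It then remains to check that, once multiplied by the coefficients $-(-1)^{(|v|+1)|w|}$, $-(-1)^{|v|}$, and $(-1)^{|v|}$ prescribed in the formula, the four bracket terms cancel in pairs and so do the four residual $\sigma$-terms.

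The main obstacle is a careful bookkeeping of signs: the identity $D^2=0$ forces the precise coefficients written in \eqref{Differential}, and one must verify that $(-1)^{(|v|+1)|w|}+(-1)^{(|v|-1)|w|+1}$ and the analogous combination for the $\sigma_{\partial_V v}(\partial_W w)$ residues both vanish. The conceptual point is that Lemma \ref{LemmaDstar} is exactly what absorbs the two $\sigma$-residues left behind by the middle two terms; this is the structural reason why the star product must be introduced, and why it has to be defined on $\mathbb{M}(V)$ rather than on $\mathbb{L}(V)$ (since the required identity need not descend through the Jacobi relation).
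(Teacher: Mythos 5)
Your proposal is correct and follows essentially the same route as the paper: reduce to checking $D^2(s(v\otimes w))=0$ via Lemma \ref{Quism}, then compute the four terms using Remark \ref{iii and iv}(iii)--(iv) and Lemma \ref{LemmaDstar} and observe the pairwise cancellations (your sign $(-1)^{(|v|-1)|w|}$ agrees with the paper's $(-1)^{(|v|+1)|w|}$). The only cosmetic difference is that you split into four cases according to $V_0\oplus V_1$ and $W_0\oplus W_1$, whereas the paper runs the single general computation, in which the degenerate cases are subsumed because the relevant terms vanish.
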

\begin{proof}
In view of Lemma \ref{Quism} we just have to prove that $D$ is a differential. In order to do that, we have to check that $D^2 (s(v\otimes w))=0$. 

Since $\partial_V v \in \mathbb{L}(V_0)$ and $\partial_W w\in \mathbb{L}(W_0)$ we have:

\begin{align*}
D\Bigl(\sigma_w (\partial_Vv)\Bigr)&=(-1)^{|w|(|v|+1)}[\partial_Vv, w] -\sigma_{\partial_Ww}(\partial_Vv),&(\text{Remark \ref{iii and iv} (iv)})\\ 
D\Bigl(\sigma_v (\partial_Ww)\Bigr)&=[v, \partial_Ww] -\sigma_{\partial_Vv}(\partial_Ww),&  (\text{Remark \ref{iii and iv} (iii)})\\
D\Bigl(\overline{\partial_Vv}\star \partial_Ww\Bigr)&=-\sigma_{\partial_Vv}(\partial_Ww)+(-1)^{(|v|+1)(|w|+1)}\sigma_{\partial_Ww}(\partial_Vv),&  (\text{Lemma \ref{LemmaDstar}}).
\end{align*} 
Therefore,
\begin{align*}
D^2 (s(v\otimes w))&=D\Bigl( [v, w]-(-1)^{(|v|+1)|w|}\sigma_{w}(\partial_V v) -(-1)^{|v|}\sigma_v (\partial_W w)\\
&+(-1)^{|v|}\overline{\partial_V v}\star \partial_W w \Bigr)\\
&=[\partial_V v , w ] +(-1)^{|v|}[v, \partial_Ww]\\
&-[\partial_Vv, w]+(-1)^{(|v|+1)|w|}\sigma_{\partial_Ww}(\partial_Vv)\\
&-(-1)^{|v|}[v, \partial_Ww]+(-1)^{|v|}\sigma_{\partial_Vv}(\partial_Ww)\\
&-(-1)^{|v|}\sigma_{\partial_Vv}(\partial_Ww)-(-1)^{(|v|+1)|w|}\sigma_{\partial_ww}(\partial_Vv)=0.
\end{align*}
\end{proof}

\begin{example} \label{exampleproduct}
We will give a complete example of the model of the product of two $2$-cones using formula ($\ref{Differential}$).

Let $X$ be the space whose Quillen minimal model is of the form $(\mathbb{L}(V), \partial ) = (\mathbb{L}(a, b, c, v), \partial)$, where $|a|= 2$, $|b|= 3$, $|c|=7$, $|v|=13$ and the differential is given by $\partial a = \partial b =\partial c =0$ and $\partial v =[a, [b,c]]$. Then, $X$ is a $2$-cone with $V=V_0 \oplus V_1$ where $V_0=\langle a, b, c\rangle$ and $V_1=\langle v\rangle$.

Consider the Cartesian product $X\times X$. The model of the second factor will be denoted as $(\mathbb{L}(W), \partial ) = (\mathbb{L}(x, y, z, w), \partial)$, where $|x|= 2$, $|y|= 3$, $|z|=7$, $|w|=13$ and the differential given by $\partial x = \partial y =\partial z =0$ and $\partial w =[x, [y,z]]$. $W_0=\langle x, y, z\rangle$ and $W_1=\langle w\rangle$.

The minimal Quillen model of $X\times X$ is of the form
$$\Bigl( \mathbb{L}(V\oplus W \oplus s(V\otimes W)), D \Bigr).$$

Let us compute the differential of this model using Theorem \ref{modelof2cones}. We will suppress suspensions $s$ and tensor products $\otimes$ to avoid excessive notation. First

$$\begin{array}{lcccr} 
s(V_0\otimes W_0)=\langle &ax,&ay,&az,&\\
&bx,&by,&bz,&\\
&cx,&cy,&cz,&\rangle ,\\
\end{array}$$
$$\begin{array}{ccc}
D(ax)=[a, x]&D(ay)=[a, y]&D(az) =[a,z]\\
D(bx)=[b, x]&D(by)=[b, y]&D(bz) =[b,z]\\
D(cx)=[c, x]&D(cy)=[c, y]&D(az) =[c,z]
\end{array}$$

Second,
$$\begin{array}{lcccr} 
s(V_0\otimes W_1)\oplus s(V_1\otimes W_0)=\langle &aw,&bw,&cw,&\\
&vx,&vy,&vz,&\rangle .
\end{array}$$

Note that we cannot set $D(aw)=[a, w]$ since $D[a, w]=[a, [x, [y, z]]\not=0$. So we need to add the term $-(-1)^{|a|}\sigma_a([x,[y, z]])$: 
\begin{align*}
D(aw) &=[a, w]-(-1)^{|a|}\sigma_a (\partial w)\\
&=[a, w] - \sigma_a ([x, [y, z]])\\
&=[a, w] -[ax, [y, z]] -[x, [ay, z]]+[x, [y, az]].
\end{align*}
Indeed,
\begin{align*}
D(-\sigma_a([x, [y, z]])&=-\Bigl[[a, x], [y, z]\Bigr]-\Bigl[x, [[a, y], z]\Bigr]-\Bigl[x, [y, [a, z]]\Bigr],
\end{align*}
and by Jacobi identity
$$D[a, w]=\Bigl[a, [x, [y, z]]\Bigr]=\Bigl[[a, x], [y, z]\Bigr]+\Bigl[x, [[a, y], z]\Bigr]+\Bigl[x, [y, [a, z]]\Bigr],$$
obtaining $D^2 (aw)=0$. In the same way we have
\begin{align*}
D(bw)&=[b, w]-(-1)^{|b|}\sigma_b (\partial w)=[b, w] +[bx, [y, z]] +[x, [by, z]]+[x, [y, bz]],\\
D(cw)&=[c, w]-(-1)^{|c|}\sigma_c(\partial w)=[c, w] +[cx, [y, z]] +[x, [cy, z]]+[x, [y, cz]],\\
D(vx)&= [v, x]-(-1)^{(|v|+1)|x|}\sigma_x(\partial v)\\
&=[v, x] -[ax, [b, c]] -[a, [bx, c]]+[a, [b, cx]],\\
D(vy)&= [v, y]-(-1)^{(|v|+1)|y|}\sigma_y(\partial v)\\
&=[v, y] -[ay, [b, c]] +[a, [by, c]]+[a, [b, cy]],\\
D(vz)&= [v, z]-(-1)^{(|v|+1)|z|}\sigma_z(\partial v)\\
&=[v, z] -[az, [b, c]] +[a, [bz, c]]+[a, [b, cz]].
\end{align*}

Finally, $s(V_1 \otimes W_1)=\langle \ vw\  \rangle$.

By formula ($\ref{Differential}$) we have

$$D(vw)=[v, w]-(-1)^{(|v|+1)|w|}\sigma_w (\partial v)-(-1)^{|v|}\sigma_v (\partial w)+(-1)^{|v|}\overline{\partial v}\star\partial w,$$

where
\begin{align*}
\sigma_w(\partial v)&=\sigma_w ([a, [b, c]])=\Bigl[aw, [b, c]\Bigr]-\Bigl[a, [bw, c]\Bigr]-\Bigl[a, [b, cw]\Bigr],\\
\sigma_v (\partial w)&=\sigma_v ([x, [y, z]])=\Bigl[vx, [y, z]\Bigl]+\Bigl[x, [vy, z]\Bigr]+\Bigl[x, [y, vz]\Bigr],\\
\overline{\partial v}\star \partial w &=a(bc)\star [x, [y, z]].
\end{align*}

The expression for $a(bc)\star [x, [y, z]]$ has been computed in detail in Example \ref{ExampleStarProduct}. 

Writting all the terms together, with the appropriate sign we obtain:
\begin{align*}
D (vw)&=[v, w]\\
&-\Bigl[aw, [b, c]\Bigr]+\Bigl[a, [bw, c]\Bigr]+\Bigl[a, [b, cw]\Bigr]\\
&+\Bigl[vx, [y, z]\Bigl]+\Bigl[x, [vy, z]\Bigr]+\Bigl[x, [y, vz]\Bigr]\\
&+\Bigl[ [bx, c],[ay, z]\Bigr] -\Bigl[ [bx, c], [y, az]\Bigr]-\Bigl[[b, cx], [ay, z]\Bigr]\\
&+\Bigl[[b, cx], [y, az]\Bigr]+\Bigl[ ax, [[by, c], z]\Bigr]+\Bigl[x, [[by , c], az]\Bigr]\\
&+\Bigl[ax, [[b , cy], z]\Bigr]+\Bigl[ x, [[b , cy ], az]\Bigr] \hskip.08cm -\hskip .08cm\Bigr[ax, [y,[bz, c]]\Bigr]\\
&\hskip .1cm -\hskip .05cm\Bigr[x, [ay, [bz, c]]\Bigr]-\Bigl[ax, [y, [b , cz]]\Bigr] -\Bigl[x, [ay, [b, cz]]\Bigr]\\
&+\Bigl[ a, [cx, [by, z]]\Bigr] +\Bigl[ a, [cx, [y, bz]]\Bigr]+\Bigl[a, [bx, [cy, z]]\Bigr]\\
& \hskip .1cm +\hskip .05cm \Bigr[a, [x, [cy, bz]]\Bigr] + \Bigl[a, [bx, [y, cz]]\Bigr] + \Bigl[ a, [x, [by, cz]]\Bigr].
\end{align*}
\end{example}
\section{Applications and further Examples}

In this section we will give some applications of the models developed and some additional examples. 






\subsection{Model of the diagonal map}

Let $(\mathbb{L}(V), \partial )$ with $V=V_0\oplus V_1$, $\partial V_0 =0$, $\partial V_1 \subset \mathbb{L}(V_0)$ be the Quillen minimal model of a $2$-cone $X$. The goal of this section is to describe a model for the diagonal map $\Delta \colon X\to X\times X$ whose target is the model of the product $X\times X$ described in section \ref{ModelProduct}. Writing $V'$ for a copy of $V$, the model of $X\times X$ is given by $(L, D)=(\mathbb{L}(V\oplus V' \oplus s(V\otimes V')), D)$ with the differential describe in Theorem \ref{modelof2cones}.

On $\mathbb{L}(V_0)$ the diagonal map is clearly modelled by the Lie morphism
\begin{equation}\label{firstdiagonal}
\Delta \colon (\mathbb{L}(V_0), 0)\to (L, D)\hskip .2cm \text{given by}\hskip .2cm \Delta (v)=v+v'.
\end{equation}

In order to extend this morphism we first define $\Gamma \colon \mathbb{M}(V_0)\to L$ inductively by $\Gamma v =0$ for $v\in V_0$ and
\begin{equation}\label{diagonal}
\Gamma (AB) =\sigma_{\underline{A}}(\underline{B}')-(-1)^{|A||B|}\sigma_{\underline{B}}(\underline{A}')+\frac{1}{2}(-1)^{|A|}[\Delta \underline{A}+\underline{A}+\underline{A}', \Gamma B]+\frac{1}{2}[\Gamma A, \Delta \underline{B}+\underline{B}+\underline{B}']
\end{equation}
for $A, B\in \mathbb{M}(V_0)$.
\begin{lemma}\label{differentialGamma}
For any $A\in \mathbb{M}(V_0)$, we have $D\Gamma A = \Delta \underline{A}-\underline{A}-\underline{A}'$.
\end{lemma}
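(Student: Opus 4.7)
The plan is to proceed by induction on the bracket length of $A \in \mathbb{M}(V_0)$. The base case $A = v \in V_0$ is immediate, since $\Gamma v = 0$ and $\Delta v - v - v' = (v + v') - v - v' = 0$.

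For the inductive step, I would assume the statement holds for $A, B \in \mathbb{M}(V_0)$ and apply $D$ term-by-term to formula (\ref{diagonal}) defining $\Gamma(AB)$. I split the computation into two packages corresponding to the four summands.

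The first package consists of the two $\sigma$-terms $\sigma_{\underline{A}}(\underline{B}')$ and $-(-1)^{|A||B|}\sigma_{\underline{B}}(\underline{A}')$. Since $\underline{A}, \underline{B} \in \mathbb{L}(V_0)$ are $D$-cycles and $\underline{A}', \underline{B}'$ lie in $\mathbb{L}(W_0)$ (for the copy $W=V'$), Remark \ref{iii and iv}(iii) yields $D\sigma_{\underline{A}}(\underline{B}') = [\underline{A}, \underline{B}']$ and $D\sigma_{\underline{B}}(\underline{A}') = [\underline{B}, \underline{A}']$. Combining with the prefactors and applying graded antisymmetry, this package will contribute precisely $[\underline{A}, \underline{B}'] + [\underline{A}', \underline{B}]$.

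The second package consists of the two bracket summands. By the Leibniz rule together with $D(\Delta\underline{A} + \underline{A} + \underline{A}') = 0$ (since $\Delta$ is a Lie morphism out of $(\mathbb{L}(V_0),0)$ and both $\underline{A}$ and $\underline{A}'$ are $D$-cycles), and the inductive hypothesis $D\Gamma A = \Delta\underline{A} - \underline{A} - \underline{A}'$ (and the analogue for $B$), these two terms collapse to
\[
\tfrac{1}{2}[\Delta\underline{A} + \underline{A} + \underline{A}',\, \Delta\underline{B} - \underline{B} - \underline{B}'] + \tfrac{1}{2}[\Delta\underline{A} - \underline{A} - \underline{A}',\, \Delta\underline{B} + \underline{B} + \underline{B}'].
\]
A routine bilinear expansion shows this equals
\[
[\Delta\underline{A}, \Delta\underline{B}] - [\underline{A}, \underline{B}] - [\underline{A}, \underline{B}'] - [\underline{A}', \underline{B}] - [\underline{A}', \underline{B}'].
\]

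Summing the two packages, the mixed brackets $[\underline{A}, \underline{B}']$ and $[\underline{A}', \underline{B}]$ cancel, leaving $[\Delta\underline{A}, \Delta\underline{B}] - [\underline{A}, \underline{B}] - [\underline{A}', \underline{B}']$; since $\Delta$ is a Lie morphism on $\mathbb{L}(V_0)$ and $\underline{AB} = [\underline{A}, \underline{B}]$, this is exactly $\Delta\underline{AB} - \underline{AB} - \underline{AB}'$, completing the induction. The only real hurdle is careful sign bookkeeping: the symmetric design of the two middle terms in (\ref{diagonal}), with the weights $\tfrac{1}{2}(-1)^{|A|}$ and $\tfrac{1}{2}$, is precisely what makes the mixed brackets cancel, and one must be vigilant about the role-swap $W = V'$ when invoking Remark \ref{iii and iv} twice.
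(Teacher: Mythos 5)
Your proposal is correct and follows essentially the same route as the paper: induction on the length of $A$, applying Remark \ref{iii and iv} to the two $\sigma$-terms and the Leibniz rule plus the inductive hypothesis to the two bracket terms, with the $\tfrac12$-weighted symmetric combination collapsing to $[\Delta\underline{A},\Delta\underline{B}]-[\underline{A}+\underline{A}',\underline{B}+\underline{B}']$ so that the mixed brackets cancel. The only cosmetic difference is that you rewrite $-(-1)^{|A||B|}[\underline{B},\underline{A}']$ as $[\underline{A}',\underline{B}]$ via antisymmetry before combining, whereas the paper carries the sign to the end; the substance is identical.
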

\begin{proof}
We proceed by induction on the length of $A$. For $A=v\in V_0$, we have $\underline{v}=v$ and the result is immediate since $\Delta (v)=v+v'$ and $\Gamma v=0$. Assume that the equality holds for $A, B\in \mathbb{M}(V_0)$.  Then, since $D\Gamma A = \Delta \underline{A}-\underline{A}-\underline{A}'$ and $D\Gamma B = \Delta \underline{B}-\underline{B}-\underline{B}'$, we have that $D\Gamma A + 2 (\underline{A}+\underline{A}')= \Delta A +\underline{A}+\underline{A}'$ and $D\Gamma B + 2 (\underline{B}+\underline{B}')= \Delta B +\underline{B}+\underline{B}'$.

 The calculation of $D\Gamma (AB)$ gives
\begin{align*}
D\Gamma (AB)&=[\underline{A}, \underline{B}']-(-1)^{|A||B|}[\underline{B}, \underline{A}'] \hskip 3.4cm (\text{by Remark \ref{iii and iv}})\\
&+\frac{1}{2}[\Delta \underline{A}+\underline{A}+\underline{A}', \Delta \underline{B}-\underline{B}-\underline{B}' ] +\frac{1}{2}[\Delta \underline{A}-\underline{A}-\underline{A}', \Delta \underline{B}+\underline{B}+\underline{B}' ]\\
&=[\underline{A}, \underline{B}']-(-1)^{|A||B|}[\underline{B}, \underline{A}']+[\Delta \underline{A}, \Delta \underline{B}]-[\underline{A}+\underline{A}', \underline{B}+\underline{B}']\\
&=\Delta [\underline{A}, \underline{B}] -[\underline{A}, \underline{B}]-[\underline{A}', \underline{B}']\\
&= \Delta \underline{AB}- \underline{AB}-\underline{AB}',
\end{align*}
since $\underline{AB}=[\underline{A}, \underline{B}]$.
\end{proof}

As the star operator, $\Gamma$ is compatible with antisymmetry but not with Jacobi identity, i.e. $\Gamma J(A, B, C)= \Gamma \Bigl( A (BC) - (AB)C + (-1)^{|A||B|}B (AC)\Bigr)$ does not vanish necessarily. However, we have:
\begin{lemma} \label{JacobiGamma}
For any $A, B, C \in \mathbb{M}(V_0)$, $\Gamma J (A, B, C)$ is a boundary.
\end{lemma}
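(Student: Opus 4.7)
The plan is to argue via the quasi-isomorphism $\varphi \colon L \to (\mathbb{L}(V), \partial_V) \times (\mathbb{L}(V'), \partial_{V'})$ from Lemma \ref{Quism}. Since $\varphi$ induces an isomorphism in homology, a $D$-cycle $z\in L$ is a $D$-boundary if and only if $\varphi(z)$ is a boundary in the target. So it suffices to establish two things: that $\Gamma J(A,B,C)$ is a $D$-cycle, and that $\varphi\bigl(\Gamma J(A,B,C)\bigr)=0$.

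For the first point I would apply Lemma \ref{differentialGamma} to obtain
\[
D\Gamma J(A,B,C)=\Delta\,\underline{J(A,B,C)}-\underline{J(A,B,C)}-\underline{J(A,B,C)}'.
\]
Because $J(A,B,C)$ lies in the two-sided ideal $I\subset \mathbb{M}(V_0)$ of \eqref{FreeLie} that is quotiented out in passing from $\mathbb{M}(V_0)$ to $\mathbb{L}(V_0)$, its image $\underline{J(A,B,C)}$ vanishes in $\mathbb{L}(V_0)$ (and similarly in $\mathbb{L}(V_0')$). Hence $D\Gamma J(A,B,C)=0$.

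For the second point, I would prove by induction on the bracket length of $X\in\mathbb{M}(V_0)$ that $\Gamma X\in\ker\varphi$ whenever $X$ has length $\geq 2$. The base case $X=BC$ with $B,C\in V_0$ reduces to showing that $\sigma_{\underline{B}}(\underline{C}')$ and $\sigma_{\underline{C}}(\underline{B}')$ lie in $\ker\varphi$; this is immediate from Definition \ref{derivations}, since these elements are scalar multiples of $s(B\otimes C')$ and $s(C\otimes B')$ respectively, and $\varphi$ kills all of $s(V\otimes V')$. For the inductive step the recursion
\[
\Gamma(AB)=\sigma_{\underline{A}}(\underline{B}')-(-1)^{|A||B|}\sigma_{\underline{B}}(\underline{A}')+\tfrac{1}{2}(-1)^{|A|}[\Delta\underline{A}+\underline{A}+\underline{A}',\Gamma B]+\tfrac{1}{2}[\Gamma A,\Delta\underline{B}+\underline{B}+\underline{B}']
\]
exhibits $\Gamma(AB)$ as a sum of terms each in $\ker\varphi$: the two $\sigma$-terms by the same observation as the base case, and the two bracket terms because $\Gamma A,\Gamma B\in\ker\varphi$ by the inductive hypothesis and $\ker\varphi$ is a Lie ideal of $L$. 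Since every monomial of $J(A,B,C)$ has length $\geq 2$ in $\mathbb{M}(V_0)$, this yields $\Gamma J(A,B,C)\in\ker\varphi$, hence $\varphi\bigl(\Gamma J(A,B,C)\bigr)=0$.

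Combining the two points with the fact that $\varphi$ is a quasi-isomorphism, the homology class $[\Gamma J(A,B,C)]\in H(L,D)$ is sent to $0$, so it must vanish, and $\Gamma J(A,B,C)$ is a boundary. I do not expect a real obstacle here: the argument is essentially a homological pigeonhole built on Lemma \ref{differentialGamma} and Lemma \ref{Quism}. The only delicate point is the bookkeeping in the induction, in particular treating the base case where $\Gamma$ vanishes on generators but the $\sigma$-terms must still be recognised as members of $\ker\varphi$.
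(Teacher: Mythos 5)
Your argument is correct, but it is genuinely different from the one in the paper. The paper proves Lemma \ref{JacobiGamma} constructively, by writing down an explicit element of $L$ (built from the terms $\sigma_{\underline{A}}\sigma_{\underline{B}}(\underline{C}')$, $[\Gamma A,\sigma_{\underline{B}}(\underline{C}')]$, $[\Gamma A,[D\Gamma B,\Gamma C]]$, etc.) and asserting that its boundary is $\Gamma J(A,B,C)$; the content of that proof is the verification of a long bracket identity. You instead give a soft homological argument: $\Gamma J(A,B,C)$ is a $D$-cycle because $\underline{J(A,B,C)}=0$ in $\mathbb{L}(V_0)$ (so Lemma \ref{differentialGamma} gives $D\Gamma J(A,B,C)=0$), it lies in $\ker\varphi$ (your induction is sound, and in fact the paper already records in the proof of Theorem \ref{diagonalmodel} that $\operatorname{im}\Gamma$ is contained in the Lie ideal generated by $s(V\otimes V')$), and a cycle killed by the quasi-isomorphism $\varphi$ of Lemma \ref{Quism} must be a boundary since $\varphi_*$ is injective on homology. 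Both steps are needed and you supply both, so the proof is complete. What the paper's approach buys is the explicit primitive itself, which can be used in concrete computations (for instance to write down an explicit homotopy between the diagonal models attached to two representatives $\overline{\partial v}$); what your approach buys is brevity and the avoidance of a tedious sign verification, at the cost of being non-constructive. If you wanted a primitive lying in $\ker\varphi$, you could still extract that from your argument, since the short exact sequence $0\to\ker\varphi\to L\to\mathbb{L}(V)\times\mathbb{L}(V')\to 0$ together with the quasi-isomorphism property shows $\ker\varphi$ is acyclic.
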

\begin{proof}
We can check that $\Gamma J (A, B, C)$ is the boundary of the element
\begin{align*}
&(-1)^{|A|}\sigma_{\underline{A}}\sigma_{\underline{B}}(\underline{C}')-(-1)^{|A|+|B||C|}\sigma_{\underline{A}}\sigma_{\underline{C}}(\underline{B}')+(-1)^{|A||B|+|A||C|+|B|}\sigma_{\underline{B}}\sigma_{\underline{C}}(\underline{A}')\\
+&\frac{1}{2}(-1)^{|A|}\Bigr[\Gamma A, \sigma_{\underline{B}}(\underline{C}')-(-1)^{|B||C|}\sigma_{\underline{C}}(\underline{B}')-\frac{1}{6}(-1)^{|B|}[D\Gamma B, \Gamma C]-\frac{1}{6}[\Gamma B, D\Gamma C]\Bigl]\\
+&\frac{1}{2}(-1)^{|A|+|B|}\Bigl[ \sigma_{\underline{A}}(\underline{B}')-(-1)^{|A||B|}\sigma_{\underline{B}}(\underline{A}')-\frac{1}{6}(-1)^{|A|}[D\Gamma A, \Gamma B ]-\frac{1}{6}[\Gamma A, D\Gamma B], \Gamma C \Bigr]\\
-&\frac{1}{2}(-1)^{|A||B|+|B|}\Bigl[ \Gamma B, \sigma_{\underline{A}}(\underline{C}')-(-1)^{|A||C|}\sigma_{\underline{C}}(\underline{A}')-\frac{1}{6}(-1)^{|A|}[D\Gamma A, \Gamma C]-\frac{1}{6}[\Gamma A, D \Gamma C]\Bigr]
\end{align*}
\end{proof}
Then, we can state:
\begin{theorem}\label{diagonalmodel}
Any Lie morphism $\Delta \colon (\mathbb{L}(V), \partial )\to (L, D)$ extending the morphism given in $(\ref{firstdiagonal})$ by
\begin{equation}\label{thediagonalmodel}
\Delta(v)=v+v' +\Gamma \overline{\partial v}\ \text{ for } v\in V_1
\end{equation}
where $\overline{\partial v}$ is a representative of $\partial v$ in $\mathbb{M}(V_0)$, is a model of the diagonal map $\Delta\colon X\to X\times X$.
\end{theorem}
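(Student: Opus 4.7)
The plan is to verify two things: (a) that $\Delta$ as defined is a dgl morphism, meaning $D\Delta=\Delta\partial$ on generators, and (b) that it realizes the diagonal at the level of models, in the sense that composing with the quasi-isomorphism $\varphi$ of Lemma \ref{Quism} yields the obvious algebraic diagonal $\Delta_0\colon v\mapsto v+v'$ (which is a model of the geometric diagonal into the product $(\mathbb{L}(V),\partial)\times(\mathbb{L}(V'),\partial)$). Since $\Delta$ is freely generated by its values on $V$, both checks reduce to computations on $V_0\oplus V_1$.

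For (a), the identity is immediate on $V_0$ where both sides vanish. On $V_1$ I would apply $D$ to the formula $\Delta(v)=v+v'+\Gamma\overline{\partial v}$, obtaining $D\Delta(v)=\partial v+(\partial v)'+D\Gamma\overline{\partial v}$, and then invoke Lemma \ref{differentialGamma} with $A=\overline{\partial v}$: since $\underline{\overline{\partial v}}=\partial v\in\mathbb{L}(V_0)$, this gives $D\Gamma\overline{\partial v}=\Delta(\partial v)-\partial v-(\partial v)'$, and the whole expression collapses to $\Delta\partial v$. So (a) is essentially a one-line consequence of the preceding lemma.

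For (b), I would post-compose with $\varphi$ and show $\varphi\Delta=\Delta_0$ on generators. On $V_0$ the equality is immediate. On $V_1$ it reduces to the vanishing $\varphi(\Gamma\overline{\partial v})=0$, for which I would prove the stronger statement: $\varphi\Gamma A=0$ for every $A\in\mathbb{M}(V_0)$, by induction on the length of $A$. The base case $A=v\in V_0$ is trivial since $\Gamma v=0$. For $A=BC$, the four summands of the defining formula (\ref{diagonal}) fall into two groups: the bracket terms $[\Delta\underline{B}+\underline{B}+\underline{B}',\Gamma C]$ and $[\Gamma B,\Delta\underline{C}+\underline{C}+\underline{C}']$ vanish under $\varphi$ by the inductive hypothesis applied to $\Gamma B$ and $\Gamma C$, while the derivation terms $\sigma_{\underline{B}}(\underline{C}')$ and $\sigma_{\underline{C}}(\underline{B}')$ are handled by the sub-lemma below.

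The main technical step, and the only real obstacle, is to establish the sub-lemma that $\varphi(\sigma_A(B'))=0$ for all $A\in\mathbb{L}(V)$ and $B'\in\mathbb{L}(V')$. I would prove this by induction on the bracket length of $B'$. For $B'=v'\in V'$, expanding $\sigma_A(v')=(-1)^{|v'||A|}\sigma_{v'}(A)$ via the derivation property of $\sigma_{v'}$ on $A\in\mathbb{L}(V)$ produces a sum in which each summand contains an $s(V\otimes V')$-generator as a leaf (since $\sigma_{v'}$ takes each generator $v\in V$ to a multiple of $s(v\otimes v')$), and $\varphi$ kills all such brackets because it is a Lie morphism vanishing on $s(V\otimes V')$. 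The inductive step for $B'=[X',Y']$ follows routinely from the Leibniz identity $\sigma_A[X',Y']=[\sigma_AX',Y']\pm[X',\sigma_AY']$. This closes both inductions and completes the proof.
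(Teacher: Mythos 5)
Your proposal is correct and follows essentially the same route as the paper: part (a) is the same one-line application of Lemma \ref{differentialGamma}, and part (b) amounts to the paper's observation that the image of $\Gamma$ lies in the Lie ideal generated by $s(V\otimes V')$ (the kernel of $\varphi$), so that composing with the two projections gives the identity. The only difference is that you prove this containment by an explicit induction (via the sub-lemma $\varphi(\sigma_A(B'))=0$), whereas the paper simply asserts it; your argument for that step is valid.
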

\begin{remark}
Note that by Lemma \ref{JacobiGamma}, any two such morphisms differ from at most a boundary.
\end{remark}
\begin{proof}[Proof of Theorem \ref{diagonalmodel}]
The Lie morphism $\Delta $ commutes with the differentials. Indeed
\begin{align*}
D\Delta (v)&=D(v+v'+\Gamma \overline{\partial v})\\
&=(\partial v)+(\partial v)' + D\Gamma \overline{\partial v}\\
&=(\partial v)+(\partial v)' + \Delta (\partial v ) -(\partial v)-(\partial v)' \hskip 2cm (\text{by Lemma}\ \ref{differentialGamma})\\
&=\Delta (\partial v).
\end{align*}
Then, $\Delta $ is a model for the diagonal map $\Delta \colon X\to X\times X$ since the image of $\Gamma$ is contained in the Lie ideal of $L$ generated by $s(V\otimes V')$, which implies that the composition of $\Delta$ with both projections $(L, D)\to (\mathbb{L}(V), \partial )$ and  $(L, D)\to (\mathbb{L}(V'), \partial' )$ is the identity.
\end{proof}

\begin{example}
Consider the space $X$ of Example \ref{exampleproduct} whose minimal Quillen model is of the form $(\mathbb{L}(V_0\oplus V_1), \partial )$ with $V_0=\langle a, b, c \rangle$ and $V_1=\langle v \rangle$ whith $|a|=|b|=|c|=2$, $|v|=7$ and $\partial v =[a, [b, c]]$. As in Example \ref{exampleproduct} we take a copy of this model of the form $(\mathbb{L}(W_0\oplus W_1), \partial )$ with $W_0=\langle x, y, z \rangle$ and $W_1=\langle w \rangle$ whith $|x|=|y|=|z|=2$, $|w|=7$ and $\partial w =[x, [y, z]]$.

The model of $X\times X$ is $L=( \mathbb{L}(V\oplus W\oplus s(V\otimes W)), D )$ (see the aforementioned example for the explicit differential).

We will compute formula $(\ref{thediagonalmodel})$ to obtain a model 
$$\Delta \colon (\mathbb{L}(V), \partial )\to \Bigr( \mathbb{L}(V\oplus W\oplus s(V\otimes W)), D \Bigl)$$
of the diagonal map $X\to X\times X$.

First, since $\partial a =\partial b =\partial c=0$, we have $\Delta a = a + x$, $\Delta b = b + y$ and $\Delta c = c+ z$. It only remains to calculate $\Delta  v$. As $\partial v =[a, [b, c]]$ and $\Delta $ is a Lie morphism we have 
\begin{align*}
\Delta (\partial v )&=\Delta ([a, [b, c]])=[\Delta a, [\Delta b , \Delta c ]]\\
&=[a, [b, c]]+ [a, [b, z]]+[a, [y, c]]+[a, [y, z]]\\
&+[x, [b, c]]+[x, [b, z]]+[x, [y, c]]+[x, [y, z]]. 
\end{align*}
However, if we define $\Delta v = v +w$ we will obtain only the first and last terms of the above summation, i.e. $D (\Delta v )=\partial v +\partial w =[a, [b, c]]+[x, [y, z]]$. But formula $(\ref{thediagonalmodel})$ $\Delta v = v + w +\Gamma (\overline{\partial v })$ fix the problem.

Since $\partial v =[a, [b, c]]$ we can write $\overline{\partial v}= a (bc)$. Then,
\begin{align*}
\Gamma \Bigl( a (bc)\Bigr)&= \sigma _a ([y, z])-\sigma_{[b, c]}(x) +\frac{1}{2}[\Delta a + a+ x, \Gamma (bc)]\\
&=[ay , z]-[y, az]-[bx, c]+[b, cx]\\
&+\frac{1}{2}[2(a+x), \sigma_b (z)-\sigma_c (y)]\\
&=[ay , z]-[y, az]-[bx, c]+[b, cx]\\
&+[a, bz]+[a, cy]+[x, bz]+[x, cy].
\end{align*}
Then, we have
\begin{align*}
\Delta v=v + w&+ [ay , z]-[y, az]-[bx, c]+[b, cx]\\
&+[a, bz]+[a, cy]+[x, bz]+[x, cy].
\end{align*}
One can check that $D(\Delta v)=\Delta (\partial v)$ using Jacobi identity and antisymmetry.
\end{example}

\subsection{An example of a product of $3$-cones}

We will finish with a last example of a product of two 3-cones whose minimal Quillen model is not entirely described by the differential given by formula $(\ref{Differential})$. The space considered will be the same as in Example \ref{exampleproduct} but with a slight modification of its differential.
\begin{example}
Let $Y$ be the space whose Quillen minimal model is given by 
\begin{equation*}
(\mathbb{L}(V),\partial )=(\mathbb{L}(V_0\oplus V_1\oplus V_2), \partial ),
\end{equation*}
where $V_0=\langle a, b\rangle $ with $|a|=2$, $|b|=3$, $V_1=\langle c\rangle$ with $\partial c=[b,b]$ and $V_2=\langle v\rangle $ with $\partial v=[a,[b,c]]$. In other terms, $Y$ is a $3$-cone.
We will compute explicitely the Quillen model of $Y\times Y$.
 We will denote by 
\begin{equation*}
(\mathbb{L}(W),\partial_W )=(\mathbb{L}(W_0\oplus W_1\oplus W_2), \partial ),
\end{equation*}
where $W_0=\langle x, y\rangle $ with $|x|=2$, $|y|=3$, $W_1=\langle z\rangle$ with $\partial z=[y,y]$ and $W_2=\langle w\rangle $ with $\partial w=[x,[y,z]]$, the second factor of the product.

The minimal Quillen model of the product $Y\times Y$ has the form
\begin{equation*}
\Bigl( \mathbb{L}(V\oplus W \oplus s(V\otimes W)), D \Bigr)
\end{equation*}
although we will suppress suspensions $s$ and tensor products $\otimes$ to avoid excessive notation. Then 
$$\begin{array}{lccccr}
s(V\otimes W)=\langle &ax,&ay,&az,&aw,&\\
&bx,&by,&bz,&bw,&\\
&cx,&cy,&cz,&cw,& \\
 &vx,&vy,&vz,&vw&\rangle .\\ 
\end{array}$$
\vskip .2cm

We will compute the differential on the generators of $s(V\otimes W)$. Note that the only generators with cone-length $3$ on the model of $Y$ are $v$ and $w$ and consequently the differential of generators not involving them are described by formula $(\ref{Differential})$. The differential of the remaining generators of the model of the product deserves a careful calculation following the pocedure explained in Example \ref{exampleproduct}.
\vskip .2cm
We will write in bold style the terms that appear due to the inclusion of the differentials $\partial c = [b, b]$ and $\partial z=[y, y]$ to simplify the calculations.

First, $s(V_0\otimes W_0)=\langle ax, ay, bx, by \rangle$ and we have 
$$D(ax)=[a, x],\ D(ay)=[a, y],\ D(bx)=[b, x],\ D(by)=[b, y].$$

Next, $s(V_0\otimes W_1)=\langle az, bz \rangle$ and following Definition \ref{L1} we have
\begin{align*}
D(az)&=[a, z]-(-1)^{|a|}\sigma_a (\partial z)=[a, z] \pmb{-2[ay, y]}\\
D(bz)&=[b, z]-(-1)^{|b|}\sigma_b (\partial z)=[b, z] \pmb{+2[by, y]},
\end{align*} 
and $s(V_1\otimes W_0)=\langle cx, cy \rangle$, and therefore
\begin{align*}
D(cx)&=[c, x]-(-1)^{(|c|+1)|x|}\sigma_x (\partial c)=[c, x] \pmb{-2[bx, b]}\\
D(cy)&=[c, y]-(-1)^{(|c|+1)|y|}\sigma_y (\partial c)=[c, y] \pmb{+2[by, b]}.
\end{align*} 

We use the full formula $(\ref{Differential})$ for the generator in $s(V_1\otimes W_1) =\langle cz \rangle$ obtaining
\begin{align*}
D(cz)&=[c, z]-(-1)^{(|c|+1)|z|}\sigma_z(\partial c)-(-1)^{|c|}\sigma_c (\partial z)+(-1)^{|c|}\overline{\partial c}\star \partial z\\
&=[c, z]\pmb{+ 2[bz, b]+2[cy, y]+2[by, by]}.  
\end{align*}

The generators of $s(V_0\otimes W_2) = \langle aw, bw \rangle$ are in principle out of the scope of formula $(\ref{Differential})$, but actually they are covered by Lupton-Smith formula $(\ref{diferentialLuptonSmith})$:
\begin{align*}
D(aw)&=[a, w]-(-1)^{|a|}\sigma_a (\partial w)\\
&=[a, w]-[ax, [y, z]]-[x, [ay, z]]+[x, [y, az]],\\
D(bw)&=[b, w]-(-1)^{|b|}\sigma_b (\partial w)\\
&=[b, w]+[bx, [y, z]]+[x, [by, z]]+[x, [y, bz]].
\end{align*}

Analogously, $s(V_2\otimes W_0)=\langle vx, vy \rangle$ has differentials:
\begin{align*}
D(vx)&=[v, x]-(-1)^{(|v|+1)|x|}\sigma_x (\partial v)\\
&[v, x]-[ax, [b,c]]-[a, [bx, c]]-[a, [b, cx]],\\
D(vy)&=[v, y]-(-1)^{(|v|+1)|y|}\sigma_y (\partial v)\\
&=[v, y]-[ay, [b, c]]+[a, [by, c]]+[a, [b, cy]].
\end{align*}

Next step is $s(V_2\otimes W_1) =\langle cw \rangle$. Although formula $(\ref{Differential})$ is not in principle designed for generators of this type, we can verify that it works correctly. Indeed,
\begin{align*}
D(cw)&=[c, w]-(-1)^{(|c|+1)|w|}\sigma_w (\partial c) - (-1)^{|c|}\sigma_c (\partial w)+(-1)^{|c|}\overline{\partial c}\star\partial w\\
&=[c, w] + \pmb{2[bw, b]}\\
&+[cx, [y, z]]+[x, [cy, z]]+[x, [y, cz]]\\
&\pmb{+2[bx, [by, z]]+2[bx, [y, bz]]+2[x, [by, bz]]},
\end{align*}
and a simple inspection shows that $D^2 (cw)=0$.

Analogously, $s(V_2\otimes W_1)=\langle vz \rangle$, and
\begin{align*}
D(vz)&=[v, z]-(-1)^{(|v|+1)|z|}\sigma_z (\partial v) - (-1)^{|v|}\sigma_v (\partial z)+(-1)^{|v|}\overline{\partial v}\star \partial z\\
&=[v, z] -[az, [b, c]]+[a, [bz, c]]+[a, [b, cz]]\\
&\pmb{ +[vy, y]-2[ay, [by, c]]-2[ay, [b, cy]]+2[a, [by, cy]]},
\end{align*}
verifies that $D^2(vz)=0$.
Finally, $s(V_2\otimes W_2)=\langle vw \rangle$ and formula $(\ref{Differential})$ does not work entirely. Indeed, if we define
\begin{align*}
D(vw)&=[v, w]-(-1)^{(|v|+1)|w|}\sigma_w (\partial v) - (-1)^{|v|}\sigma_v (\partial w)+(-1)^{|v|}\overline{\partial v}\star\partial w\\
&=[v, w]\\
&-\Bigl[aw, [b, \pmb{c}]\Bigr]+\Bigl[a, [bw, \pmb{c}]\Bigr]+\Bigl[a, [b, \pmb{cw}]\Bigr]\\
&+\Bigl[vx, [y, \pmb{z}]\Bigl]+\Bigl[x, [vy, \pmb{z}]\Bigr]+\Bigl[x, [y, \pmb{vz}]\Bigr]\\
&+\Bigl[ [bx, \pmb{c}],[ay, \pmb{z}]\Bigr] -\Bigl[ [bx, \pmb{c}], [y, \pmb{az}]\Bigr]-\Bigl[[b, \pmb{cx}], [ay, \pmb{z}]\Bigr]\\
&+\Bigl[[b, \pmb{cx}], [y, \pmb{az}]\Bigr]+\Bigl[ ax, [[by, \pmb{c}], \pmb{z}]\Bigr]+\Bigl[x, [[by , \pmb{c}], \pmb{az}]\Bigr]\\
&+\Bigl[ax, [[b , \pmb{cy}], \pmb{z}]\Bigr]+\Bigl[ x, [[b , \pmb{cy} ], \pmb{az}]\Bigr] \hskip.08cm -\hskip .08cm\Bigr[ax, [y,[\pmb{bz}, \pmb{c}]]\Bigr]\\
&\hskip .1cm -\hskip .05cm\Bigr[x, [ay, [\pmb{bz}, \pmb{c}]]\Bigr]-\Bigl[ax, [y, [b , \pmb{cz}]]\Bigr] -\Bigl[x, [ay, [b, \pmb{cz}]]\Bigr]\\
&+\Bigl[ a, [\pmb{cx}, [by, \pmb{z}]]\Bigr] +\Bigl[ a, [\pmb{cx}, [y, \pmb{bz}]]\Bigr]+\Bigl[a, [bx, [\pmb{cy}, \pmb{z}]]\Bigr]\\
& \hskip .1cm +\hskip .05cm \Bigr[a, [x, [\pmb{cy}, \pmb{bz}]]\Bigr] + \Bigl[a, [bx, [y, \pmb{cz}]]\Bigr] + \Bigl[ a, [x, [by, \pmb{cz}]]\Bigr]
\end{align*}
Then, taking into account the differentials of generators in $s(V\otimes W)$ including $c$ and/or $z$ we obtain $D^2(vw)=0 + (\text{extra terms})\not=0$.

It can be checked by a long and tedious computation, that we have to add the terms 
$$2\Bigl[a, [bx, [by, bz]]]\Bigr]-2\Bigl[x, [ay, [by, cy]]\Bigr]-2\Bigl[ax, [y, [by, cy]]\Bigr]$$
to the expression given for $D(vw)$ in order to cancel the extra terms and obtain $D^2 (vw)=0$.

As we see, for $3$-cones, terms with three factors in $s(V\otimes W)$ can appear, but an explicit formula for this case is unknown to us.
\end{example}

\noindent\textbf{Acknowledgements.}  
The first author was partially supported by the MICINN grant PID2020-118753GB-I00 of the Spanish
Government and the Junta de Andaluc\'ia grant ProyExcel-00827

The third author was partially supported by Portuguese Funds through FCT (Funda\c c\~ao para a Ci\^encia e a Tecnologia) within the Projects UIDB/00013/2020 and UIDP/00013/2020

This work has been supported by the National Science Centre grant \\ 2016/21/P/ST1/03460 within the European Union's Horizon 2020 research and innovation programme under the Marie Sk\l{}odowska-Curie grant agreement No. 665778.

\begin{flushright}
	\includegraphics[width=38px]{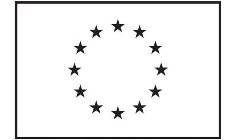}%
\end{flushright}

\end{document}